\newtheorem{thm}{Theorem}
\newtheorem{cor}[thm]{Corollary}
\newtheorem{lem}[thm]{Lemma}
\newtheorem{prop}[thm]{Proposition}
\theoremstyle{definition}
\theoremstyle{remark}
\newtheorem{rem}{Remark}
\newtheorem{example}{Example}
\newtheorem{question}{Question}
\DeclareMathOperator{\sd}{sd}
\author{Sel\c{c}uk Kayacan
  \thanks{The author thanks to Volkmar Welker who made many useful suggestions during the preparation of this manuscript. This study was supported by Scientific and Technological Research Council of Turkey (TUBITAK) under the Grant Number 122F490. The author thanks to TUBITAK for their supports.}
}
\title{Subrack Lattices of Conjugation Racks}
\date{}
\begin{document}

\maketitle

\small

\begin{center}
  Bahçeşehir University, Faculty of Engineering\\ and Natural Sciences,
  Istanbul, Turkey\\
  {\it e-mail:} \href{mailto:selcuk.kayacan@bau.edu.tr}{selcuk.kayacan@bau.edu.tr}
\end{center}

\begin{abstract}
  A rack is a set with a binary operation such that left multiplications are automorphisms of the set and a quandle is a rack satisfying a certain condition. Let $S$ be a subset of a finite group $G$ which is closed under the conjugation operation $a\triangleright b := aba^{-1}$. The set $S$ with the conjugation operation $\triangleright$ is a quandle. We call those objects \emph{conjugation racks}. The prime examples are
  \begin{itemize}
  \item the group rack $(G,\triangleright)$,
  \item the conjugacy class rack $(C,\triangleright)$, where $C$ is a conjugacy class in $G$, and
  \item the $p$-power rack $(G_p,\triangleright)$, where $p$ is a prime and $G_p$ is the set of all elements in $G$ whose order is a power of $p$.
  \end{itemize}
  The set of all subracks of a finite rack form a lattice under inclusion. In this paper we study the subrack lattices of the conjugation racks. In particular, we show that the subrack lattice can be associated with a subposet of a partition lattice as well as with a subposet of an integer partition lattice in a canonical way if the rack is connected. And, if the rack is not connected, the study of the homotopy properties of the subrack lattice can be reduced into the study of the homotopy properties of the subposet of parabolic subracks. We also prove that for a certain class of $p$-power racks the order of a Sylow $p$-subgroup divides the reduced Euler characteristic of the subrack lattice of the $p$-power rack. This statement can be considered as the rack analogue of a result by Brown in the field of subgroup complexes regarding the Euler characteristic of the poset of nontrivial $p$-subgroups of a group.
  
  \smallskip
  \noindent 2020 {\it Mathematics Subject Classification.} Primary: 20D30;\\ Secondary: 06A15, 08A99.

  \smallskip
  \noindent Keywords: Conjugation rack; conjugacy class rack; $p$-power rack 

\end{abstract}

\section{Introduction}

There are many works in literature studying the combinatorial objects that are associated with the algebraic objects in a natural way. For example, if the algebraic object is a group $G$, one naturally defines the \emph{subgroup lattice} $\mathcal{L}(G)$ as the lattice of all subgroups of $G$. In this context we may ask the following question: Given the subgroup lattice, how much information can be recovered about the group itself? An early result in this direction by Baer is that finite abelian groups can almost be distinguished by their subgroup lattices (see \cite{Baer39} or \cite[Corollary~1.2.8]{Sch94}).

A relevant combinatorial object is the \emph{intersection graph} $\Gamma(G)$ of a group $G$: The vertex set of $\Gamma(G)$ is the set of proper nontrivial subgroups of $G$ and there is an edge between two vertices if and only if they are distinct and their intersection is nontrivial. Observe that given $\Gamma(G)$ one can recover $\mathcal{L}(G)$, but not vice versa. Intuitively, intersection graphs should be highly connected graphs and if there are some examples of such graphs with `low' connectivity, they must be exceptional. The main results in \cite{Kay18a} made this intuition more precise. It was proven in \cite[Theorem~5.2]{KY15} that finite abelian groups can be distinguished by their intersection graphs in the same way as their subgroup lattices. Therefore, we may conclude that subgroup lattices and intersection graphs hold the same amount of information on the subgroup structure when the groups in question are abelian.

Given a partially ordered set (abbreviated poset) $\mathcal{P}$, the \emph{order complex} $\Delta(\mathcal{P})$ is defined as the simplicial complex whose faces are the totally ordered sets (chains) in $\mathcal{P}$. Introducing the order complexes is a standard way to further incorporate topological arguments into our combinatorial setting (see, for example, \cite{Br75,Qui78,HI89,Smi11}). Let $G$ be a group and $p$ be a prime dividing the order of $G$. We define $\mathcal{S}_p(G)$ as the poset of nontrivial $p$-subgroups of $G$. An important result by Brown in the field of subgroup complexes (see \cite{Br75} or \cite[Theorem~5.3.1]{Smi11}) states that the Euler characteristic $\chi(\Delta(\mathcal{S}_p(G)))\equiv 1$ modulo $p^k$, where $p^k$ is the $p$-part of $|G|$. Similar work using subgroup lattices, frames, coset posets, and quandles has also appeared in literature (see, \cite{SW12,SW16,Fum09,HSW19}).

In \cite{Kay18b} the \emph{intersection complex} $K(G)$ of a group $G$ is defined as the simplicial complex whose simplices are the sets of proper subgroups of $G$ which intersect nontrivially. The intersection graph $\Gamma(G)$ is the $1$-skeleton of the intersection complex $K(G)$. As is shown in \cite{Kay18b} $K(G)$ is contractible if the domination number of $\Gamma(G)$ is $1$. Notice that $K(G)$ is a crosscut complex of $\mathcal{L}(G)$ and therefore shares the homotopy type with $\Delta(\overline{\mathcal{L}(G)})$, where $\overline{\mathcal{L}(G)}:=\mathcal{L}(G)\setminus \{G,1\}$. 

A \emph{rack} $X$ is a set (possibly empty) together with a binary operation $\triangleright$ satisfying the following two axioms:
\begin{itemize}
  \setlength{\itemindent}{1em}
\item[(A1)] for all $a,b,c\in X$ we have $a \triangleright (b \triangleright c) = (a \triangleright b) \triangleright (a \triangleright c)$ 
\item[(A2)] for all $a,b\in X$ there is a
  unique $x\in X$ such that $a \triangleright x = b$.
\end{itemize}
By (A1) the map $\phi_a\colon X\to X;\,x\mapsto a\triangleright x$ is a rack morphism for every $a\in X$ and by (A2) those morphisms are bijections. Therefore, we may define a rack as a set with a binary operation such that left multiplications are automorphisms of the set. We say $X$ is a \emph{quandle} if, in addition, the following third axiom holds:
\begin{itemize}
  \setlength{\itemindent}{1em}
\item[(A3)] for every $a\in X$ we have $a\triangleright a = a$.
\end{itemize}
The defining axioms of racks and quandles are intimately related with the Reidemeister moves and those algebraic objects proved to be useful in the study of knot invariants (see, for example, \cite{Joy82,Mat82,FR92}).

Let $X$ be a finite rack. The set of subracks of $X$, denoted $\mathcal{R}(X)$, form a lattice under inclusion (see \cite{HSW19}). This lattice shown to be atomic (see \cite{KS19}) and complemented (see \cite{KS21}). We call $\mathcal{R}(X)$ the \emph{subrack lattice} of $X$. Let $G$ be a finite group and consider the conjugation operation $a \triangleright b := aba^{-1}$ defined on $G$. One can easily observe that the set $G$ together with the conjugation operation $\triangleright$ form a quandle. Given the subrack lattice $\mathcal{R}(G)$, how much information can be recovered about the group $G$? Heckenberger, Shareshian, and Welker proved that (see \cite[Theorem~1.1]{HSW19}) the isomorphism type of $\mathcal{R}(G)$ determines whether $G$ is abelian, nilpotent, supersolvable, solvable and simple. Further, $\mathcal{R}(G)$ determines the nilpotence class of $G$ (see \cite{Kay21}). Let $\overline{\mathcal{R}(G)} := \mathcal{R}(G)\setminus \{G,\emptyset\}$. Another result by the same authors is that (see \cite[Proposition~1.3]{HSW19}) the order complex $\Delta(\overline{\mathcal{R}(G)})$ of $\mathcal{R}(G)$ is homotopy equivalent to a $(c-2)$-sphere $S^{c-2}$, where $c$ is the number of conjugacy classes in $G$. 

Let $G$ be a finite group and $C$ be a conjugacy class of $G$. Clearly, the conjugacy class $C$ equipped with the conjugation operation $a\triangleright b := aba^{-1}$ is also a quandle. The structure of the subracks of a conjugacy class plays an important role in the classification of finite-dimensional pointed Hopf algebras (see \cite{AG03}). In the previous paragraph we stated that $\Delta(\overline{\mathcal{R}(G)})$ is homotopy equivalent to a sphere. A similar result is the following (see \cite[Corollary~8]{Kay22}): If $C$ is a conjugacy class of a noncentral element in a finite nilpotent group $G$, then $\Delta(\overline{\mathcal{R}(C)})$ is a $(m-2)$-sphere, where $m$ is the number of maximal elements of $\mathcal{R}(C)$. Actually both of those results can be derived as corollaries of a more general result \cite[Theorem~1]{Kay22}.

Although results in \cite{HSW19} about the subrack lattice $\mathcal{R}(X)$ are quite comprehensive for $X=G$, in case $X=C$ is a single conjugacy class, there are only scattered examples that exhibit the complexity of the question: The \emph{partition lattice} $\Pi_n$ is the set of all partitions of $[n]:=\{1,2,\dots,n\}$ ordered by refinement and the \emph{$k$-equal partition lattice} $\Pi_{n,k}$ is the lattice of all partitions $B_1| \dots | B_r$ of $[n]$ such that, for all $1\leq i\leq r$, either $|B_i| = 1$ or $|B_i|\geq k$. In \cite[Example~2.3]{HSW19} the subrack lattice $\mathcal{R}(T)$ is shown to be isomorphic to partition lattice $\Pi_n$, where $T$ is the conjugacy class of transpositions of the symmetric group $S_n$. Let $P$ be the rack of all $p$-cycles in the alternating group $A_n$. In \cite[Proposition~2.11]{HSW19} it is proved that (the order complexes of) $\mathcal{R}(P)$ and $\Pi_{n,p}$ are homotopy equivalent if $p < n-2$ is an odd prime number. Those results are extended in \cite[Theorem~2]{Kay22}: The subposet $\Pi_{n,k}^m$ of $\Pi_{n,k}$ is defined in the following way. An element of $\Pi_{n,k}$ is an element of $\Pi_{n,k}^m$ if and only if either it is the partition consisting of singletons or the number of elements that are lying in nonsingleton blocks is at least $mk$. Let $C$ be the conjugacy class of elements in $S_n$ whose cycle type is $(p^s,1^t)$. Then $\mathcal{R}(C)$ and $\Pi_{n,p}^s$ are homotopy equivalent when the integers $n,p$ and $s$ are subject to certain restrictions.

Let $G$ be a finite group and $S$ be a subset of $G$ which is closed under the conjugation operation $\triangleright$. More generally, the set $S$ together with $\triangleright$ is an example of a quandle. We say $(S,\triangleright)$ is a \emph{conjugation rack}. The following three classes of conjugation racks deserve particular attention:
  \begin{itemize}
  \item the \emph{group rack} $(G,\triangleright)$,
  \item the \emph{conjugacy class rack} $(C,\triangleright)$ where $C$ is a conjugacy class in $G$, and
  \item the \emph{$p$-power rack} $(G_p,\triangleright)$, where $p$ is a prime and $G_p$ is the set of all elements in $G$ whose order is a power of $p$.
  \end{itemize}
  In this paper we study the subrack lattices of the conjugation racks by focusing on the conjugacy class racks and the $p$-power racks. In Section~\ref{sec:pre}, we shall review some relevent notions and facts from the literature and prove some auxiliary results. In particular, we define $\mathsf{Inf}(\mathcal{L})$ in this section as the subposet of the lattice $\mathcal{L}$ whose elements are typically the meet of a set of coatoms of $\mathcal{L}$ and prove that the complexes $\Delta(\overline{\mathcal{L}})$ and $\Delta(\mathsf{Inf}(\mathcal{L}))$ share the same homotopy type (see Proposition~\ref{prop:inf}). In Section~\ref{sec:ccr} we shall focus on the conjugacy class racks in our selection of examples and discussion. However, our main results in this section (see Theorem~\ref{thm:pi} and Theorem~\ref{thm:ipi}) which relate $\mathsf{Inf}(\mathcal{R}(X))$ with a subposet of the partition lattice and a subposet of the integer partition lattice in a canonical way are valid for all finite connected racks. In Section~\ref{sec:ppr} we will study the $p$-power racks. To this end we will introduce the subposet $\mathcal{S}(X)$ of spherical subracks and the subposet $\mathcal{P}(X)$ of parabolic subracks. By its definition $\mathcal{S}(X)$ would be the empty poset when $X$ is a connected rack. The usefulness of those definitions will be justified by the fact that $\Delta(\overline{\mathcal{R}(X)})$ is homotopy equivalent to the join of the complexes $\Delta(\mathcal{S}(X))$ and $\Delta(\mathcal{P}(X))$ (see Proposition~\ref{prop:ord} and Theorem~\ref{thm:p}). Furthermore, the complex $\Delta(\mathcal{S}(X))$ is homeomorphic to a sphere; hence, the study of the homotopy properties of $\mathcal{R}(X)$ can be reduced into the study of the homotopy properties of $\mathcal{P}(X)$. As was mentioned before the Euler characteristic of the order complex $\Delta(\mathcal{S}_p(G))$ of the poset of nontrivial $p$-subgroups of a group $G$ is congruent to $1$ modulo $p^k$, where $p^k$ is the order of a Sylow $p$-subgroup. We prove an analogous statement holds for the $p$-power rack $G_p$ when the poset $\mathcal{P}(G_p)$ satisfies a certain condition (see Theorem~\ref{thm:euler}).

\section{Preliminaries}\label{sec:pre}

In this paper we will be concerned with finite racks. As was remarked earlier, for a rack $X$, the map $\phi_a\colon X\to X;\,x\mapsto a\triangleright x$ is an automorphism of $X$ for all $a\in X$. Moreover, the map $\Phi\colon a\mapsto \phi_a$ is a rack morphism between $X$ and $\Phi(X)$, where $\phi_a\triangleright \phi_b := \phi_a\phi_b\phi_a^{-1}$. We say $X$ is \emph{faithful} if the map $\Phi$ is injective. Notice that $X$ is isomorphic to a conjugation rack if it is faithful.

The \emph{inner automorphism group} of $X$, denoted $\mathsf{Inn}(X)$, is defined as the (normal) subgroup of the full automorphism group of $X$ generated by the elements of $\Phi(X)$, i.e., $\mathsf{Inn}(X) := \langle\Phi(X)\rangle$. The group $\mathsf{Inn}(X)$ acts on $X$ by automorphisms and $X$ can be written as the disjoint union of $\mathsf{Inn}(X)$-orbits. We say $X$ is \emph{connected} if the action of $\mathsf{Inn}(X)$ on $X$ is transitive, i.e., if $X$ consists of a single $\mathsf{Inn}(X)$-orbit.  The group $\mathsf{Inn}(X)$ also acts on $\Phi(X)$ by conjugation and the map $\Phi\colon a\mapsto \phi_a$ is a $\mathsf{Inn}(X)$-equivariant map between $X$ and $\Phi(X)$. Moreover,  $X$ is  faithful and connected if and only if it is isomorphic, as a rack, to a conjugacy class $C$ of a group $G$ so that $G = \langle C\rangle$ and $Z(G) = 1$. For further details see \cite[Proposition~3.2]{AG03} or \cite[Proposition~2]{Kay20}.

For a partially ordered set (poset) $\mathcal{P}$ and an element $x\in \mathcal{P}$, we define $\mathcal{P}_{\geq x}$ as the subposet of $\mathcal{P}$ whose elements are the elements of $\mathcal{P}$ that are greater than or equal $x$, i.e., $$ \mathcal{P}_{\geq x} := \{y\in \mathcal{P}\colon y\geq x\}. $$  In an analogous way, we define the subposets $\mathcal{P}_{> x}$, $\mathcal{P}_{< x}$, $\mathcal{P}_{\leq x}$. If $x\leq y$, we define the  \emph{closed interval} $[x,y]$ as the set of all elements $z\in\mathcal{P}$ satisfying $x\leq z \leq y$. Similarly, for $x,y\in\mathcal{P}$, we define the \emph{open interval} $(x,y)$ as the set $\{z\mid x<z<y\}$. Notice that $[x,y]=\mathcal{P}_{\geq x}\cap \mathcal{P}_{\leq y}$ if $x\leq y$ and $(x,y)=\mathcal{P}_{> x}\cap \mathcal{P}_{< y}$. We say $y$ \emph{covers} $x$ if $x<y$ and $(x,y)=\emptyset$. Suppose $\mathcal{P}$ is finite. The \emph{length} $\ell(\mathcal{C})$ of a chain (or a linearly ordered set) $\mathcal{C}$ in $\mathcal{P}$ is the number of elements in $\mathcal{C}$ minus $1$. In such a case we say $\mathcal{C}$ is a $\ell(\mathcal{C})$-chain. And the length $\ell(\mathcal{P})$ of $\mathcal{P}$ is the maximum of the lengths of chains in $\mathcal{P}$. The dual poset of $\mathcal{P}$, i.e., the poset $\mathcal{P}$ with the order relation reversed, is denoted by $\mathcal{P}^*$. Notice that the set of chains of $\mathcal{P}$ coincides with the set of chains of $\mathcal{P}^*$, so their order complexes are same. 

A poset $\mathcal{P}$ is called \emph{bounded} if it has a unique maximal element, say $\hat{1}$, and a unique minimal element, say $\hat{0}$. If $\mathcal{P}$ is a bounded poset, we define the proper part of $\mathcal{P}$ as $\overline{\mathcal{P}} := \mathcal{P}\setminus \{\hat{1},\hat{0}\}$. In particular, the proper part of the subrack lattice of a rack $X$ is $\overline{\mathcal{R}(X)}:=\mathcal{R}(X)\setminus \{X,\emptyset\}$. When we say the order complex of a bounded poset $\mathcal{P}$, we mean the order complex $\Delta(\overline{\mathcal{P}})$. Notice that if $\mathcal{P}$ is bounded, then $\Delta(\mathcal{P})$ would be a cone which is a contractible space. Given a poset $\mathcal{P}$ (which may be bounded or not) we define the bounded poset $\widehat{\mathcal{P}}:=\mathcal{P}\sqcup\{\hat{1},\hat{0}\}$ by adding a unique maximum element $\hat{1}$ and a unique minimum element $\hat{0}$. Notice that  $\mathcal{P}\sqcup\{\hat{1},\hat{0}\}$ is a disjoint union. 

\begin{lem}[see {\cite[Corollary~10.12]{Bjo95}}]\label{lem:poset}
Let $f\colon\mathcal{P}\to\mathcal{P}$ be a poset endomorphism such that $x \leq f(x)$ for all $x\in \mathcal{P}$. Then $f$ induces homotopy equivalence between $\Delta(\mathcal{P})$ and $\Delta(f(\mathcal{P}))$. 
\end{lem}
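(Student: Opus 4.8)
The plan is to exhibit $f$, corestricted to its image, as a homotopy equivalence whose homotopy inverse is the inclusion of $f(\mathcal{P})$. Write $j\colon f(\mathcal{P})\hookrightarrow\mathcal{P}$ for the inclusion and $f'\colon\mathcal{P}\to f(\mathcal{P})$ for the corestriction of $f$, so that $j\circ f'=f$ as maps $\mathcal{P}\to\mathcal{P}$. Both $j$ and $f'$ are order-preserving, hence induce simplicial maps between the corresponding order complexes, and the claim to establish is that $\Delta(f')$ and $\Delta(j)$ are mutually inverse up to homotopy. I would deduce this from a single auxiliary fact.

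The key step is the following standard comparison lemma: \emph{if $g,h\colon\mathcal{Q}\to\mathcal{R}$ are poset maps with $g(x)\leq h(x)$ for every $x\in\mathcal{Q}$, then the induced maps $\Delta(g)$ and $\Delta(h)$ are homotopic.} Granting this, the argument finishes quickly. Applying it to the pair $\mathrm{id}_{\mathcal{P}}$ and $f$ (which satisfy $\mathrm{id}_{\mathcal{P}}(x)=x\leq f(x)$ by hypothesis) gives $\Delta(j)\circ\Delta(f')=\Delta(f)\simeq\mathrm{id}$ on $\Delta(\mathcal{P})$. Applying it to the pair $\mathrm{id}_{f(\mathcal{P})}$ and $f'\circ j$ on $f(\mathcal{P})$ — where for $y\in f(\mathcal{P})$ one has $y\leq f(y)=f'(j(y))$, so once more the smaller of the two maps is the identity — gives $\Delta(f')\circ\Delta(j)\simeq\mathrm{id}$ on $\Delta(f(\mathcal{P}))$. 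Hence $\Delta(f')$ is a homotopy equivalence with inverse $\Delta(j)$, which is exactly the assertion.

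To prove the comparison lemma I would build an explicit homotopy at the level of order complexes. Let $\mathbf{2}$ denote the two-element chain $\{0<1\}$ and form the product poset $\mathcal{Q}\times\mathbf{2}$. Define $F\colon\mathcal{Q}\times\mathbf{2}\to\mathcal{R}$ by $F(x,0)=g(x)$ and $F(x,1)=h(x)$. One checks that $F$ is order-preserving: the only nonobvious relations are those of the form $(x,0)\leq(x',1)$ with $x\leq x'$, for which $F(x,0)=g(x)\leq g(x')\leq h(x')=F(x',1)$, using monotonicity of $g$ together with the pointwise bound $g\leq h$. The map $F$ then induces $\Delta(F)\colon\Delta(\mathcal{Q}\times\mathbf{2})\to\Delta(\mathcal{R})$. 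Finally one invokes the standard simplicial fact that the order complex of a product of posets triangulates the product of the order complexes, so that $\lvert\Delta(\mathcal{Q}\times\mathbf{2})\rvert\cong\lvert\Delta(\mathcal{Q})\rvert\times[0,1]$; precomposing $\Delta(F)$ with this homeomorphism yields a continuous map that restricts to $\Delta(g)$ at one end and to $\Delta(h)$ at the other, i.e.\ the required homotopy.

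The main obstacle — and essentially the only place any content enters — is the verification underlying the comparison lemma, namely that the combined map $F$ respects the ``cross-level'' relations of $\mathcal{Q}\times\mathbf{2}$; this is precisely the step that simultaneously consumes order-preservation of the maps and the pointwise inequality $g\leq h$, and it explains why the one-sided hypothesis $x\leq f(x)$ (rather than an equality) is enough. The identification of $\lvert\Delta(\mathcal{Q}\times\mathbf{2})\rvert$ with a cylinder is routine and should be cited rather than reproved.
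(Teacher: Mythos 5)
Your argument is correct: the paper gives no proof of this lemma, citing it directly as Corollary~10.12 of Bj\"orner's Handbook chapter, and your proof is precisely the standard derivation of that corollary — the order homotopy lemma (two poset maps comparable pointwise induce homotopic maps, via the prism $\mathcal{Q}\times\mathbf{2}$) applied twice to show that the corestriction $f'$ and the inclusion $j$ are mutually inverse homotopy equivalences. Since all posets arising in the paper are finite, the cited identification of $\lvert\Delta(\mathcal{Q}\times\mathbf{2})\rvert$ with the cylinder $\lvert\Delta(\mathcal{Q})\rvert\times[0,1]$ holds without any point-set caveats, so nothing further is needed.
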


\begin{rem}
  Since the chains of a poset and its dual are same, by Lemma~\ref{lem:poset} we conclude that $\Delta(\mathcal{P})$ and $\Delta(f(\mathcal{P}))$ are homotopy equivalent if $f\colon\mathcal{P}\to\mathcal{P}$ is a poset endomorphism and $x \geq f(x)$ for all $x\in \mathcal{P}$.
\end{rem}

A \emph{closure operator} $f$ on a poset $\mathcal{P}$ is an endomorphism of $\mathcal{P}$ satisfying the following requirements:
\begin{enumerate}[(i)]
\item $x \leq f(x)$ for all $x\in \mathcal{P}$ (or, $x \geq f(x)$ for all $x\in \mathcal{P}$)
\item $f^2(x) = f(x)$ for all $x\in\mathcal{P}$
\end{enumerate}

\begin{rem}
  If $f\colon\mathcal{R}(X)\to\mathcal{R}(X)$ is a closure operator such $f(\overline{\mathcal{R}(X)})\subseteq \overline{\mathcal{R}(X)}$, then $f$ induces a homotopy equivalence between $\Delta(\overline{\mathcal{R}(X)})$ and $\Delta(f(\overline{\mathcal{R}(X)}))$.
\end{rem}

In \cite{Qui78} Quillen introduced several poset fiber theorems. The most basic of those theorems states that if $f\colon\mathcal{P}\to\mathcal{Q}$ is a poset map such that for all $y\in \mathcal{Q}$ the fiber $f^{-1}(\mathcal{Q}_{\leq y})$ is contractible, then $f$ induces a homotopy equivalence of $\mathcal{P}$ with $\mathcal{Q}$. A powerful generalization of this result is the following. For two topological spaces $T_1$ and $T_2$, we write $T_1\simeq T_2$ if $T_1$ and $T_2$ are homotopy equivalent.

\begin{thm}[see {\cite[Theorem~1.1]{BWW05}}]\label{thm:fiber}
  Let $f\colon \mathcal{P}\to \mathcal{Q}$ be a poset map such that for all $y\in \mathcal{Q}$ the fiber $\Delta(f^{-1}(\mathcal{Q}_{\leq y}))$ is $\ell(f^{-1}(\mathcal{Q}_{<y}))$-connected. Then $$ \Delta(\mathcal{P}) \simeq \Delta(\mathcal{Q})\vee \{\Delta(f^{-1}(\mathcal{Q}_{\leq y})) * \Delta(\mathcal{Q}_{>y})\mid y\in \mathcal{Q}\}, $$ where $\vee$ denotes the wedge (of $\Delta(\mathcal{Q})$ and all $\Delta(f^{-1}(\mathcal{Q}_{\leq y})) * \Delta(\mathcal{Q}_{>y})$) formed by identifying the vertex $y$ in $\Delta(\mathcal{Q})$ with any element of $f^{-1}(\mathcal{Q}_{\leq y})$, for each $y\in \mathcal{Q}$. Consequently, if $\Delta(\mathcal{Q})$ is connected, then $$ \Delta(\mathcal{P}) \simeq \Delta(\mathcal{Q}) \vee \bigvee_{y\in \mathcal{Q}} \Delta(f^{-1}(\mathcal{Q}_{\leq y})) * \Delta(\mathcal{Q}_{>y}).  $$
\end{thm}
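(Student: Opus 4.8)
The plan is to realize $\Delta(\mathcal{P})$ as a homotopy colimit over the base poset $\mathcal{Q}$ and then to split that homotopy colimit into the asserted wedge, the connectivity hypothesis being exactly what forces the splitting. Consider the diagram of posets $\mathcal{D}$ over $\mathcal{Q}$ sending $y\mapsto f^{-1}(\mathcal{Q}_{\le y})$, with the maps induced by the inclusions $f^{-1}(\mathcal{Q}_{\le y})\hookrightarrow f^{-1}(\mathcal{Q}_{\le y'})$ for $y\le y'$. First I would form the Grothendieck construction $\int_{\mathcal{Q}}\mathcal{D}$, whose elements are the pairs $(y,p)$ with $f(p)\le y$, ordered by $(y,p)\le(y',p')$ iff $y\le y'$ and $p\le p'$. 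The projection $(y,p)\mapsto p$ maps $\int_{\mathcal{Q}}\mathcal{D}$ onto $\mathcal{P}$, and its fiber over $p$ is $\{y\colon f(p)\le y\}=\mathcal{Q}_{\ge f(p)}$, which has the minimum $f(p)$ and is therefore contractible. By the basic Quillen fiber theorem recalled above this projection is a homotopy equivalence, so $\Delta(\mathcal{P})\simeq\Delta(\int_{\mathcal{Q}}\mathcal{D})$; and by Thomason's theorem the nerve of the Grothendieck construction is a model for the homotopy colimit of the space diagram $\Delta\circ\mathcal{D}$. Thus $\Delta(\mathcal{P})\simeq\mathrm{hocolim}_{\mathcal{Q}}\,\Delta\circ\mathcal{D}$, and no connectivity assumption is needed for this step.

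Second I would decompose this homotopy colimit. The canonical projection $\mathrm{hocolim}_{\mathcal{Q}}\,\Delta\circ\mathcal{D}\to\Delta(\mathcal{Q})$ expresses the homotopy colimit as $\Delta(\mathcal{Q})$ with, over each vertex $y$, the fiber $\Delta(f^{-1}(\mathcal{Q}_{\le y}))$ coned off in the directions recorded by $\Delta(\mathcal{Q}_{>y})$; the local contribution of $y$ is accordingly the join $\Delta(f^{-1}(\mathcal{Q}_{\le y}))*\Delta(\mathcal{Q}_{>y})$. I would make this precise by the Wedge Lemma for homotopy colimits (due to Ziegler--\v{Z}ivaljevi\'{c}), or equivalently by induction on $|\mathcal{Q}|$: removing a maximal element $y_0$ gives the decomposition $\Delta(\mathcal{P})=\Delta(f^{-1}(\mathcal{Q}\setminus\{y_0\}))\cup\Delta(f^{-1}(\mathcal{Q}_{\le y_0}))$ with intersection $\Delta(f^{-1}(\mathcal{Q}_{<y_0}))$, since $y_0$ maximal forces every chain that meets $f^{-1}(y_0)$ to lie entirely in $f^{-1}(\mathcal{Q}_{\le y_0})$ (any element above such a point still maps to $y_0$). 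This is a homotopy pushout, and applying the inductive hypothesis to $f$ restricted over $\mathcal{Q}\setminus\{y_0\}$ together with a Mayer--Vietoris gluing analysis produces the wedge, the new summand being $\Delta(f^{-1}(\mathcal{Q}_{\le y_0}))*\Delta(\mathcal{Q}_{>y_0})$.

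The hard part is this gluing step, and it is exactly where the connectivity hypothesis is consumed. For the homotopy pushout above to collapse into a wedge with a join summand, rather than into a genuinely twisted union, the inclusion of the gluing subcomplex $\Delta(f^{-1}(\mathcal{Q}_{<y}))$ into the fiber $\Delta(f^{-1}(\mathcal{Q}_{\le y}))$ must be null-homotopic. Since $\dim\Delta(f^{-1}(\mathcal{Q}_{<y}))=\ell(f^{-1}(\mathcal{Q}_{<y}))$ and $\Delta(f^{-1}(\mathcal{Q}_{\le y}))$ is assumed $\ell(f^{-1}(\mathcal{Q}_{<y}))$-connected, any map from this subcomplex into the fiber is null-homotopic by cellular approximation, and the pushout splits off the join with the base link $\Delta(\mathcal{Q}_{>y})$; the precise bound in the hypothesis is chosen to match this dimension count exactly. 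The remaining care is bookkeeping: tracking how the join factors $\Delta(\mathcal{Q}_{>y})$ evolve as elements are removed, and handling the basepoint identifications in the wedge (the vertex $y$ of $\Delta(\mathcal{Q})$ glued to a point of $f^{-1}(\mathcal{Q}_{\le y})$). Finally, when $\Delta(\mathcal{Q})$ is connected the wedge is independent of these basepoint choices up to homotopy, which yields the displayed clean form $\Delta(\mathcal{Q})\vee\bigvee_{y\in\mathcal{Q}}\Delta(f^{-1}(\mathcal{Q}_{\le y}))*\Delta(\mathcal{Q}_{>y})$.
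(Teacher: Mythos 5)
First, a point of comparison: the paper does not prove this statement at all --- it is imported verbatim from Bj\"orner--Wachs--Welker \cite{BWW05} as background --- so there is no internal proof to measure you against; what you have written is, in outline, a reconstruction of the published proof, which indeed runs through the Projection Lemma (your Grothendieck-construction step) and the Ziegler--\v{Z}ivaljevi\'c Wedge Lemma. Within that outline, however, there are two real gaps. The first is your appeal to Quillen's fiber theorem: that theorem requires contractibility of the preimages of principal order ideals (or filters), not of the point fibers, and point-fiber contractibility alone is insufficient --- take $\mathcal{P}$ a two-element antichain mapping bijectively onto a two-element chain $\mathcal{Q}$: all point fibers are singletons, yet $\Delta(\mathcal{P})\not\simeq\Delta(\mathcal{Q})$. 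Your argument is salvageable: for the projection $\pi\colon (y,p)\mapsto p$ one checks that $\pi^{-1}(\mathcal{P}_{\geq p})$ is contractible, because $(y,p')\mapsto (y,p)$ is a descending closure operator on it whose image $\{(y,p)\mid y\geq f(p)\}\cong \mathcal{Q}_{\geq f(p)}$ has a minimum; then apply the fiber theorem in its up-set form, which is legitimate since a poset and its dual have the same order complex (cf.\ Lemma~\ref{lem:poset} and the remark following it). But as written, the step is incorrect, not merely terse.

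The second and more serious gap is the passage from the fact that each inclusion $\Delta(f^{-1}(\mathcal{Q}_{<y}))\hookrightarrow\Delta(f^{-1}(\mathcal{Q}_{\leq y}))$ is null-homotopic to the wedge decomposition. The Wedge Lemma applies to diagrams whose structure maps \emph{are} constant, and a homotopy colimit over a poset is invariant under objectwise homotopy equivalences of diagrams, not under arbitrary homotopies of the individual structure maps: the null-homotopies you obtain from cellular approximation need not be coherent with one another, so you cannot simply replace the inclusions by constant maps and quote the lemma. Organizing these null-homotopies into an actual equivalence of homotopy colimits is precisely the technical content of the proof in \cite{BWW05}, not an afterthought. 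Your inductive variant has the same hole in a different place: after splitting off the summand at a maximal element $y_0$, the homotopy pushout leaves behind not $\Delta(f^{-1}(\mathcal{Q}\setminus\{y_0\}))$ but that complex with a cone on $\Delta(f^{-1}(\mathcal{Q}_{<y_0}))$ attached, and the join factors produced by the inductive hypothesis are $\Delta((\mathcal{Q}\setminus\{y_0\})_{>y})$ rather than the required $\Delta(\mathcal{Q}_{>y})$; so the inductive hypothesis does not apply to the leftover piece as stated, and one must formulate and prove a stronger statement that absorbs the attached cones and corrects the join factors. Dismissing this as bookkeeping is where your proposal stops being a proof; everything before that point is a correct and well-chosen frame.
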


The \emph{face poset} $\mathcal{P}(\Omega)$ of a simplicial complex $\Omega$ is the poset of all simplices of $\Omega$ ordered by containment $\subseteq$. The \emph{(first) barycentric subdivision} $\sd(\Omega)$ of a simplicial complex $\Omega$ is defined as $$\sd(\Omega):=\Delta(\mathcal{P}(\Omega)).$$
It is a standard fact that $\Omega$ and $\sd(\Omega)$ share the same homotopy type.

A subset $\mathcal{C}$ of a finite poset $\mathcal{P}$ is called a \emph{crosscut} if it has the following properties:
\begin{enumerate}[(i)]
\item $\mathcal{C}$ is an antichain, i.e., no two elements of $\mathcal{C}$ are comparable.
\item For every chain $\sigma$ in $\mathcal{P}$ there exists some element in $\mathcal{C}$ which is comparable to each element in $\sigma$.
\item If $\mathcal{A}\subseteq \mathcal{C}$ has an upper bound or a lower bound in $\mathcal{P}$, then the join $\bigvee A$ or the meet $\bigwedge A$ exists in $\mathcal{P}$.
\end{enumerate}
For example, the set of maximal subracks of a finite rack $X$ form a crosscut in  $\overline{\mathcal{R}(X)}$. Let $\mathcal{C}$ be a crosscut in $\mathcal{P}$. The \emph{crosscut complex} $\Delta(\mathcal{P},\mathcal{C})$ is defined as the simplicial complex whose simplices are the subsets of $\mathcal{C}$ having an upper bound or a lower bound in $\mathcal{P}$.

\begin{thm}[see {\cite[Theorem~3]{Rot64}} or {\cite[Theorem~10.8]{Bjo95}}]\label{thm:crosscut}
  The crosscut complex $\Delta(\mathcal{P},\mathcal{C})$ and $\Delta(\mathcal{P})$ are homotopy equivalent.
\end{thm}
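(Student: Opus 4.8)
The plan is to realize the crosscut complex as the nerve of a good cover of $\Delta(\mathcal{P})$ and then invoke the Nerve Lemma. For each $c\in\mathcal{C}$, let $V_c := \mathcal{P}_{\leq c}\cup\mathcal{P}_{\geq c}$ be the set of elements comparable to $c$, and set $U_c := \Delta(V_c)$, the subcomplex of $\Delta(\mathcal{P})$ consisting of those chains all of whose elements are comparable to $c$. First I would check that $\{U_c\}_{c\in\mathcal{C}}$ is a cover of $\Delta(\mathcal{P})$: any chain $\sigma$ is, by the covering property (ii) of a crosscut, comparable to some single $c\in\mathcal{C}$ in each of its elements, so $\sigma\in U_c$. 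Next, each $U_c$ is contractible, since $c$ itself lies in $V_c$ and is comparable to every element of $V_c$; thus $c$ serves as a cone apex of $\Delta(V_c)$.

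The heart of the argument is the analysis of the intersections $\bigcap_i U_{c_i}$ for a subset $\{c_1,\dots,c_k\}\subseteq\mathcal{C}$. Such an intersection equals $\Delta(Q)$, where $Q = \{x\in\mathcal{P} : x \text{ is comparable to every } c_i\}$. I would first show that $\bigcap_i U_{c_i}\neq\emptyset$ if and only if $\{c_1,\dots,c_k\}$ has a common upper bound or a common lower bound in $\mathcal{P}$. The backward direction is immediate, since any such bound is itself a vertex of $Q$; the forward direction uses the antichain property (i): if $x\in Q$, then $x$ cannot satisfy $c_i\leq x\leq c_j$ for distinct $i,j$, as this would force $c_i\leq c_j$ and contradict that $\mathcal{C}$ is an antichain. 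Hence every $x\in Q$ is either an upper bound of all the $c_i$ or a lower bound of all of them. Consequently the nerve of the cover $\{U_c\}_{c\in\mathcal{C}}$ is precisely the crosscut complex $\Delta(\mathcal{P},\mathcal{C})$, whose simplices are by definition the subsets of $\mathcal{C}$ admitting an upper or a lower bound in $\mathcal{P}$.

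It then remains to show that every nonempty intersection is contractible, and this is where the join/meet property (iii) enters. If $\{c_1,\dots,c_k\}$ has an upper bound, then (iii) provides the join $m=\bigvee_i c_i$; I would verify that $m$ is comparable to every element of $Q$ — elements of $Q$ that are upper bounds of the $c_i$ dominate $m$, while elements that are lower bounds lie below each $c_i$ and hence below $m$ — so $m$ is a cone apex and $\Delta(Q)$ is contractible. The case of a lower bound is dual, using the meet. With a cover by contractible subcomplexes whose nonempty intersections are all contractible, the Nerve Lemma (see \cite{Bjo95}) yields $\Delta(\mathcal{P})\simeq \mathrm{Nerve}(\{U_c\}_{c\in\mathcal{C}}) = \Delta(\mathcal{P},\mathcal{C})$, as desired. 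The main obstacle I anticipate is the bookkeeping in the intersection analysis: one must use all three crosscut axioms in concert — the antichain condition to pin down the combinatorial type of the nerve, and the join/meet condition to supply the topological (contractibility) hypotheses of the Nerve Lemma — while ensuring the cone-apex arguments remain valid in the mixed case where $Q$ contains both upper and lower bounds of the $c_i$.
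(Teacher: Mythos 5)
The paper itself offers no proof of this statement: it is quoted directly from the literature (Rota, and Bj\"orner's Handbook chapter, Theorem~10.8), so there is no internal argument to compare yours against. Your nerve-lemma proof is correct in structure and is in fact essentially the standard proof of the crosscut theorem found in the cited source: cover $\Delta(\mathcal{P})$ by the comparability subcomplexes $U_c$ (axiom (ii) gives the covering property), identify the nerve of this cover with $\Delta(\mathcal{P},\mathcal{C})$ (the antichain axiom (i) forces every element of $Q$ to be a common upper bound or a common lower bound of $\{c_1,\dots,c_k\}$, so nonemptiness of $Q$ is equivalent to boundedness of the subset), and obtain contractibility of the nonempty intersections from axiom (iii), then apply the nerve lemma for covers by subcomplexes, which is legitimate here since $\mathcal{P}$ is finite.

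One imprecision deserves a fix. Axiom (iii) as stated in the paper does \emph{not} allow you to conclude, from the existence of an upper bound of $\{c_1,\dots,c_k\}$, that the join exists: the hypothesis ``upper bound or lower bound'' yields only the disjunctive conclusion ``the join $\bigvee_i c_i$ \emph{or} the meet $\bigwedge_i c_i$ exists.'' So your sentence ``(iii) provides the join $m=\bigvee_i c_i$'' is not what the axiom guarantees. This does not damage the argument, because whichever of the two elements exists works as the cone apex: if $j=\bigvee_i c_i$ exists, then $j\in Q$, every upper bound of the $c_i$ in $Q$ dominates $j$ (least upper bound), and every lower bound lies below some $c_i\leq j$; dually, if $m=\bigwedge_i c_i$ exists, then $m\in Q$, lower bounds in $Q$ lie below $m$, and upper bounds lie above some $c_i\geq m$. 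In either case the element is comparable to all of $Q$, so $\Delta(Q)$ is a cone, and the mixed case you worried about (where $Q$ contains both kinds of bounds) is covered by exactly this two-sided check. With that one-sentence repair, your proof is complete.
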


Let $\mathcal{L}$ be a finite lattice. We define $\mathsf{Inf}(\mathcal{L})$ as the subposet of $\overline{\mathcal{L}}$ consisting of those elements that are meet of a set of coatoms of $\mathcal{L}$. (In \cite{HSW19} this poset is denoted by $\mathsf{Int}(\mathcal{L})$.) More generally, for a subset $\mathcal{S}$ of $\overline{\mathcal{L}}$, the poset $\mathsf{Inf}(\mathcal{L},\mathcal{S})$ is the subposet of $\overline{\mathcal{L}}$ whose elements are the ones that are the meet of a set of elements in $\mathcal{S}$.

\begin{prop}\label{prop:inf}
  For a finite lattice $\mathcal{L}$, its order complex $\Delta(\overline{\mathcal{L}})$ and the complex $\Delta(\mathsf{Inf}(\mathcal{L}))$ share the same homotopy type.
\end{prop}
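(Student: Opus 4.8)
The plan is to exhibit a closure operator on $\overline{\mathcal{L}}$ whose image is precisely $\mathsf{Inf}(\mathcal{L})$ and then invoke Lemma~\ref{lem:poset}. For $x\in\overline{\mathcal{L}}$ let $\mathcal{M}(x)$ denote the set of coatoms of $\mathcal{L}$ lying above $x$, and define
$$ f(x) := \bigwedge \mathcal{M}(x). $$
Since $\mathcal{L}$ is finite, every $x\neq\hat{1}$ lies below some coatom (walk up a maximal chain from $x$ to $\hat{1}$ and take the element just below the top), so $\mathcal{M}(x)\neq\emptyset$ and $f(x)$ is a well-defined meet of coatoms, i.e.\ an element of $\mathsf{Inf}(\mathcal{L})$.

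First I would check that $f$ is a poset endomorphism of $\overline{\mathcal{L}}$ satisfying $x\leq f(x)$. Because $x$ is a lower bound of $\mathcal{M}(x)$, we have $x\leq f(x)$; and since $f(x)$ is bounded above by any single coatom in $\mathcal{M}(x)$, we get $f(x)<\hat{1}$, while $f(x)\geq x>\hat{0}$ gives $f(x)\neq\hat{0}$. Hence $f$ indeed maps $\overline{\mathcal{L}}$ into itself. Monotonicity is immediate: if $x\leq y$ then $\mathcal{M}(y)\subseteq\mathcal{M}(x)$, and meeting over a smaller set yields a larger element, so $f(x)\leq f(y)$.

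Next I would verify idempotency. Every coatom $m\in\mathcal{M}(x)$ satisfies $m\geq f(x)$ (as $f(x)$ is the meet of the coatoms in $\mathcal{M}(x)$), while $f(x)\geq x$ gives the reverse inclusion $\mathcal{M}(f(x))\subseteq\mathcal{M}(x)$; therefore $\mathcal{M}(f(x))=\mathcal{M}(x)$ and $f(f(x))=f(x)$. Thus $f$ is a closure operator. To identify the image, note that $f(\overline{\mathcal{L}})\subseteq\mathsf{Inf}(\mathcal{L})$ holds by construction; conversely, if $y=\bigwedge A$ for a set $A$ of coatoms, then $A\subseteq\mathcal{M}(y)$ forces $f(y)\leq\bigwedge A=y$, and together with $y\leq f(y)$ this gives $f(y)=y$. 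Hence $\mathsf{Inf}(\mathcal{L})$ consists exactly of the fixed points of $f$, and $f(\overline{\mathcal{L}})=\mathsf{Inf}(\mathcal{L})$.

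Finally, applying Lemma~\ref{lem:poset} to the endomorphism $f\colon\overline{\mathcal{L}}\to\overline{\mathcal{L}}$ yields the homotopy equivalence $\Delta(\overline{\mathcal{L}})\simeq\Delta(f(\overline{\mathcal{L}}))=\Delta(\mathsf{Inf}(\mathcal{L}))$, as desired. The only point requiring genuine care is confirming that $f$ never escapes $\overline{\mathcal{L}}$ into the two excluded bounds $\hat{0},\hat{1}$; once that is secured, the rest is short order-theoretic bookkeeping, and no fiber-theoretic or crosscut machinery is needed.
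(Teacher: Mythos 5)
Your proof is correct and takes essentially the same approach as the paper: the map $f$ you construct is exactly the closure operator $\epsilon$ in the paper's proof (the meet of all coatoms above $x$), and both arguments conclude by applying Lemma~\ref{lem:poset}. The only difference is that you spell out the verifications (stability of $\overline{\mathcal{L}}$ under $f$, monotonicity, idempotency, and the identification $f(\overline{\mathcal{L}})=\mathsf{Inf}(\mathcal{L})$) that the paper dismisses as easy to observe.
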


\begin{proof}
  Let $\mathcal{C}$ be the set of coatoms of $\mathcal{L}$. Consider the map $\epsilon\colon \overline{\mathcal{L}}\to \overline{\mathcal{L}}$ defined in the following way: If $x\in \overline{\mathcal{L}}$ and $\mathcal{C}_x:=\{c\in \mathcal{C} \mid x\leq c\}$, then $\epsilon(x)$ is the meet of all elements in $\mathcal{C}_x$. It is easy to observe that $\epsilon$ is a closure operator. Since $\epsilon(\overline{\mathcal{L}}) = \mathsf{Inf}(\mathcal{L})$, by Lemma~\ref{lem:poset} the result follows.
\end{proof}

\begin{rem}
   Let $\mathcal{C}=\{c_1,\dots,c_m\}$ be the set of coatoms of $\mathcal{L}$. From the previous results we know that $\Delta(\overline{\mathcal{L}}) \simeq \Delta(\overline{\mathcal{L}},\mathcal{C})$ and $\Delta(\overline{\mathcal{L}},\mathcal{C}) \simeq \sd(\Delta(\overline{\mathcal{L}},\mathcal{C}))$. Let $\mathcal{F}$ be the face poset of $\Delta(\overline{\mathcal{L}},\mathcal{C})$, i.e., $\mathcal{F}$ is the poset $\mathcal{P}(\Delta(\overline{\mathcal{L}},\mathcal{C}))$. The map $\epsilon\colon \{c_{i_1},\dots,c_{i_k}\}\mapsto \{c_{j_1},\dots,c_{j_l}\}$, where $\{c_{j_1},\dots,c_{j_l}\}$ is the largest subset of $\mathcal{C}$ satisfying $c_{j_1}\wedge\dots\wedge c_{j_l} = c_{i_1}\wedge\dots\wedge c_{i_k}$, is a closure operator on $\mathcal{F}$. Since the order complex of $\mathcal{F}^*$ is the barycentric subdivision of $\Delta(\overline{\mathcal{L}},\mathcal{C})$ and $\epsilon$ is a closure operator  on $\mathcal{F}$, we see that $\sd(\Delta(\overline{\mathcal{L}},\mathcal{C}))$ is homotopy equivalent to $\Delta(\epsilon(\mathcal{F}))$. Let $\theta\colon \mathcal{F}\to \mathsf{Inf}(\mathcal{L})^*$ be the map taking the set $\{c_{i_1},\dots,c_{i_k}\}$ to the meet of its elements in $\mathcal{L}$. Observe that $\theta$ is a poset map and by restricting its domain we obtain a poset isomorphism between $\epsilon(\mathcal{F})$ and $\mathsf{Inf}(\mathcal{L})^*$. This gives another proof of Proposition~\ref{prop:inf} as the complexes $\sd(\Delta(\overline{\mathcal{L}},\mathcal{C}))$ and $\Delta(\mathsf{Inf}(\mathcal{L}))$ share the same homotopy type. Observe that the atoms of $\widehat{\mathcal{F}}$ correspond to the coatoms of $\mathcal{L}$ and for each $x\in \mathcal{F}$ the subposet $\widehat{\mathcal{F}}_{\leq x}$ is isomorphic to a Boolean algebra $B_{m}=2^{[m]}$, where $m$ is the length of a longest chain in $\widehat{\mathcal{F}}_{\leq x}$.
\end{rem}

Let $G$ be a group acting on the underlying set of a poset $\mathcal{P}$. We say $\mathcal{P}$ is a \emph{$G$-poset} if for all $g\in G$ and $S,T\in \mathcal{P}$ the relation $S \leq T$ implies $g\cdot S \leq g\cdot T$. In other words each $g\in G$ induces an automorphism of $\mathcal{P}$. 
Let $\mathcal{P}$ be a $G$-poset and $H$ be a subgroup of $G$. The \emph{$H$-fixed point subposet} $\mathcal{P}^H$ of $\mathcal{P}$ is defined as $$ \mathcal{P}^H := \{S\in \mathcal{P} \mid g\cdot S = S \text{ for all } g\in H\}. $$ 
And the \emph{$H$-singular subcomplex} $\Delta(\mathcal{P})^{H-\mathrm{sing}}$ of the order complex $\Delta(\mathcal{P})$ is the union $$ \Delta(\mathcal{P})^{H-\mathrm{sing}} := \bigcup_{J\in \mathcal{L}(H)_{>1}}\Delta(\mathcal{P}^J), $$ where $J$ is a nontrivial subgroup of $H$. Notice that $\Delta(\mathcal{P})^{H-\mathrm{sing}}$ may not be an induced subcomplex of $\Delta(\mathcal{P})$.

\begin{lem}[see {\cite[Lemma~4.4.14]{Smi11}}]\label{lem:fix}
  Let $G$ be a group, $H$ be a nontrivial subgroup of $G$, and $\mathcal{P}$ be a $G$-poset. Suppose $\Delta(\mathcal{P}^J)$ is contractible for any nontrivial subgroup $J$ of $H$. Then $\Delta(\mathcal{P})^{H-\mathrm{sing}}$ is contractible.
\end{lem}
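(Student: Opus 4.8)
The plan is to prove that $\Delta(\mathcal{P})^{H-\mathrm{sing}}$ is contractible by exhibiting it as a union of contractible subcomplexes and controlling their intersections via a nerve-type argument. First I would observe that the subgroups $J$ indexing the union in the definition of $\Delta(\mathcal{P})^{H-\mathrm{sing}}$ range over all nontrivial subgroups of $H$, and that for $J_1 \leq J_2$ we have $\mathcal{P}^{J_2} \subseteq \mathcal{P}^{J_1}$, hence $\Delta(\mathcal{P}^{J_2}) \subseteq \Delta(\mathcal{P}^{J_1})$. The key structural fact is that the intersection of fixed-point subposets behaves well: for nontrivial subgroups $J_1, J_2$ of $H$ we have $\mathcal{P}^{J_1} \cap \mathcal{P}^{J_2} = \mathcal{P}^{\langle J_1, J_2\rangle}$, and since $\langle J_1, J_2\rangle$ is again a nontrivial subgroup of $H$, the corresponding $\Delta(\mathcal{P}^{\langle J_1, J_2\rangle})$ is contractible by hypothesis. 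More generally, the intersection of any collection of the subcomplexes $\Delta(\mathcal{P}^J)$ is itself of the form $\Delta(\mathcal{P}^{J'})$ for a nontrivial subgroup $J'$ (the join of the relevant subgroups), hence contractible.

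Having established this, I would invoke the nerve lemma. Consider the cover of $\Delta(\mathcal{P})^{H-\mathrm{sing}}$ by the family $\{\Delta(\mathcal{P}^J) \mid J \in \mathcal{L}(H)_{>1}\}$. Each member is contractible by hypothesis, and by the previous paragraph every nonempty finite intersection of members is again contractible (being $\Delta(\mathcal{P}^{J'})$ for the join $J'$ of the corresponding subgroups, which is nontrivial). The nerve lemma for closed covers by subcomplexes then asserts that $\Delta(\mathcal{P})^{H-\mathrm{sing}}$ is homotopy equivalent to the nerve of this cover. Since every finite subfamily has nonempty (indeed contractible) intersection, the nerve is a simplex with vertex set indexed by the nontrivial subgroups of $H$, and a full simplex is contractible. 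Therefore $\Delta(\mathcal{P})^{H-\mathrm{sing}}$ is contractible.

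The main obstacle I anticipate is verifying the contractibility of all intersections cleanly, i.e. confirming that $\bigcap_{i} \Delta(\mathcal{P}^{J_i}) = \Delta\!\left(\bigcap_i \mathcal{P}^{J_i}\right) = \Delta\!\left(\mathcal{P}^{\langle J_1,\dots,J_r\rangle}\right)$ and that $\langle J_1,\dots,J_r\rangle$ remains a \emph{nontrivial} subgroup of $H$ to which the hypothesis applies. The first equality requires that a chain lies in the intersection of the order complexes precisely when all its elements are fixed by every $J_i$ simultaneously, which is immediate since a simplex of $\Delta(\mathcal{P}^{J})$ is exactly a chain in $\mathcal{P}$ all of whose elements are $J$-fixed; the second is the standard fact that being fixed by $J_1$ and by $J_2$ is equivalent to being fixed by the subgroup they generate. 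The nontriviality of the join is automatic since each $J_i$ is already nontrivial. Once these points are in place the nerve lemma delivers the conclusion; alternatively, one could phrase the same argument by choosing any single nontrivial $J_0 \le H$ and noting that $\Delta(\mathcal{P}^{J_0})$ is a contractible subcomplex meeting every other member of the cover in a contractible set, but the nerve formulation is the most transparent route.
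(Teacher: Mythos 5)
Your proof is correct, but note that the paper never proves this lemma at all: it is imported verbatim from \cite[Lemma~4.4.14]{Smi11}, so there is no internal argument to compare against, and your write-up serves as a complete self-contained substitute. The two facts you isolate are exactly the right ones: each $\Delta(\mathcal{P}^J)$ is the induced subcomplex of $\Delta(\mathcal{P})$ on the vertex set $\mathcal{P}^J$, and since the set $\{g\in G \mid g\cdot S = S\}$ is a subgroup, an element is fixed by each $J_i$ if and only if it is fixed by $\langle J_1,\dots,J_r\rangle$; hence every finite intersection of members of the cover equals $\Delta\bigl(\mathcal{P}^{\langle J_1,\dots,J_r\rangle}\bigr)$, which is contractible (in particular nonempty) by hypothesis because the join is again a nontrivial subgroup of $H$. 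The nerve theorem in the form you need --- a cover of a simplicial complex by subcomplexes all of whose nonempty finite intersections are contractible --- is available in the paper's own reference \cite[Theorem~10.6]{Bjo95}, and since all finite intersections are nonempty the nerve is the full simplex on $\mathcal{L}(H)_{>1}$, which is contractible; this is essentially the standard argument behind the cited lemma. One caution about your closing alternative: knowing only that a fixed contractible $\Delta(\mathcal{P}^{J_0})$ meets every other member of the cover in a contractible set is \emph{not} by itself enough to conclude contractibility of the union (pairwise overlap conditions do not control higher overlaps), so the nerve formulation you used as the main argument is the one that actually closes the proof.
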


An \emph{atom} of a finite lattice $\mathcal{L}$ is an element covering the unique minimum element and the lattice $\mathcal{L}$ is said to be \emph{atomic} if each element different from the unique minimum element is the join of some set of atoms of $\mathcal{L}$. Analogously, a \emph{coatom} is an element that is covered by the unique maximum of the lattice and the lattice is called \emph{coatomic} if any non-maximal element can be expressed as the meet of some set of coatoms.

\begin{lem}\label{lem:co}
  Let $\mathcal{L}$ be a finite lattice having at least two coatoms. Then $\widehat{\mathsf{Inf}(\mathcal{L})}$ is a coatomic lattice. If, further, $\mathcal{L}$ is atomic, then $\widehat{\mathsf{Inf}(\mathcal{L})}$ is atomic. Consequently, the lattice $\widehat{\mathsf{Inf}(\mathcal{L})}$ is both atomic and coatomic when $\mathcal{L}$ is the subrack lattice of a finite rack.
\end{lem}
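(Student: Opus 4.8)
The plan is to settle the lattice structure first and then read off coatomicity almost for free, leaving atomicity as the substantive point. Everything rests on the observation that $\mathsf{Inf}(\mathcal{L})$ is closed under the meet of $\mathcal{L}$: if $x=\bigwedge A$ and $y=\bigwedge B$ with $A,B$ sets of coatoms, then $x\wedge_{\mathcal{L}}y=\bigwedge(A\cup B)$ is again a meet of coatoms, so it either lies in $\mathsf{Inf}(\mathcal{L})$ or equals $\hat{0}_{\mathcal{L}}$. In the first case this is the greatest lower bound of $x,y$ inside $\mathsf{Inf}(\mathcal{L})$; in the second case $x,y$ have no common lower bound there, and their meet in $\widehat{\mathsf{Inf}(\mathcal{L})}$ is the adjoined $\hat{0}$. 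Thus $\widehat{\mathsf{Inf}(\mathcal{L})}$ is a finite meet-semilattice possessing the top element $\hat{1}$, and any such poset is automatically a lattice (a join is the meet of the nonempty set of upper bounds). I would record two facts for later use: the coatoms of $\widehat{\mathsf{Inf}(\mathcal{L})}$ are exactly the coatoms $c_1,\dots,c_m$ of $\mathcal{L}$, being the maximal elements of $\mathsf{Inf}(\mathcal{L})$, while its atoms are the minimal elements of $\mathsf{Inf}(\mathcal{L})$. The hypothesis of at least two coatoms is what guarantees that $\mathsf{Inf}(\mathcal{L})$ has no maximum, so that the adjoined $\hat{1}$ is a genuine new top and the $c_i$ are honest coatoms.

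Coatomicity is then immediate from the definition of $\mathsf{Inf}(\mathcal{L})$: each of its elements is, by construction, a meet in $\mathcal{L}$ of a set of coatoms, and by the computation above this coincides with the meet taken in $\widehat{\mathsf{Inf}(\mathcal{L})}$. Hence every element of the proper part is a meet of coatoms, which is exactly what is required.

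For atomicity I would use the closure operator $\epsilon$ of Proposition~\ref{prop:inf}, where $\epsilon(z)$ is the meet of all coatoms above $z$ and $\mathsf{Inf}(\mathcal{L})=\epsilon(\overline{\mathcal{L}})$. If $a$ is an atom of $\mathcal{L}$ then $\epsilon(a)\in\mathsf{Inf}(\mathcal{L})$ and $a\leq\epsilon(a)$; conversely, atomicity of $\mathcal{L}$ forces every minimal element $z$ of $\mathsf{Inf}(\mathcal{L})$ to dominate some atom $a$, and then $\epsilon(a)\leq\epsilon(z)=z$ together with minimality gives $z=\epsilon(a)$. So the atoms of $\widehat{\mathsf{Inf}(\mathcal{L})}$ are precisely the minimal members of $\{\epsilon(a):a\ \text{an atom of}\ \mathcal{L}\}$. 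For $x\in\mathsf{Inf}(\mathcal{L})$, atomicity of $\mathcal{L}$ yields $x=\bigvee_{\mathcal{L}}\{a:a\leq x\ \text{an atom}\}$, and since $a\leq\epsilon(a)\leq\epsilon(x)=x$ this upgrades to $x=\bigvee_{\mathcal{L}}\{\epsilon(a):a\leq x\}$, a join that is computed identically in $\widehat{\mathsf{Inf}(\mathcal{L})}$ because $x$ is an upper bound inside $\mathsf{Inf}(\mathcal{L})$. I expect the main obstacle to be the last step: an $\epsilon(a)$ need not itself be a minimal element of $\mathsf{Inf}(\mathcal{L})$, so one must still show that the atoms of $\widehat{\mathsf{Inf}(\mathcal{L})}$ lying below $x$ already join up to $x$ rather than to some strictly smaller element. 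I would attack this by contradiction, assuming the join is some $x'<x$ and using atomicity of $\mathcal{L}$ to produce an atom $a\leq x$ with $a\not\leq x'$; the task is then to convert $\epsilon(a)\leq x$ into a minimal element of $\mathsf{Inf}(\mathcal{L})$ lying below $x$ but not below $x'$. This is the delicate point, and it is exactly where one expects to need finer control on how atoms sit beneath the meets of coatoms.

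Finally, the subrack case follows by specialization: a finite subrack lattice $\mathcal{R}(X)$ is atomic by \cite{KS19}, and when it has at least two maximal subracks it satisfies the hypotheses of both parts, so $\widehat{\mathsf{Inf}(\mathcal{R}(X))}$ is simultaneously atomic and coatomic. For dispatching the delicate step above in this setting I would lean on the fact that $\mathcal{R}(X)$ is complemented \cite{KS21}, since complementation is precisely what governs how the atoms sit beneath the meets of coatoms and should prevent the join of the minimal elements of $\mathsf{Inf}(\mathcal{R}(X))$ from falling short of $x$.
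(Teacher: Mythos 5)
Your proof of the lattice structure and of coatomicity is correct and matches the paper's own argument: $\mathsf{Inf}(\mathcal{L})$ is closed under the meet of $\mathcal{L}$ (the paper invokes \cite[Proposition~3.3.1]{Sta12} to conclude it is a lattice), and coatomicity is read off directly from the definition. The genuine gap is the one you flag yourself: you never show that the actual atoms of $\widehat{\mathsf{Inf}(\mathcal{L})}$ below an element $x$ join up to $x$; you only show that the elements $\epsilon(a)$, for atoms $a\leq x$ of $\mathcal{L}$, do. You should know that the paper closes this gap by fiat: its proof asserts that the elements $b_i:=\epsilon(a_i)$, as $a_i$ ranges over the atoms of $\mathcal{L}$, ``form the set of minimal elements of $\mathsf{Inf}(\mathcal{L})$,'' and then concludes from $x=b_{i_1}\vee\dots\vee b_{i_k}$ that $\widehat{\mathsf{Inf}(\mathcal{L})}$ is atomic. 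That assertion is precisely the claim you declined to make, and your caution is vindicated: it is false in general, and with it the second sentence of the lemma fails at this level of generality.

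Here is a counterexample. Let $\mathcal{L}$ have bottom $\hat{0}$, atoms $a_1,a_2,a_3,a_4$, a further element $d$, coatoms $c_1,c_2$, and top $\hat{1}$, with order generated by $a_2,a_3<d$, $d<c_1$, $d<c_2$, $a_1<c_1$, $a_4<c_2$. One checks this is a finite lattice with two coatoms, and it is atomic: $d=a_2\vee a_3$, $c_1=a_1\vee a_2$, $c_2=a_2\vee a_4$, $\hat{1}=a_1\vee a_4$. But $\mathsf{Inf}(\mathcal{L})=\{c_1,c_2,d\}$ with $d<c_1$ and $d<c_2$, so $d$ is the unique atom of $\widehat{\mathsf{Inf}(\mathcal{L})}$ and $c_1$ is not a join of atoms; hence $\widehat{\mathsf{Inf}(\mathcal{L})}$ is not atomic. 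The culprit is exactly the phenomenon you anticipated: $\epsilon(a_1)=c_1$ is not minimal in $\mathsf{Inf}(\mathcal{L})$, because $c_1$ is the only coatom above $a_1$. Consequently your missing step cannot be supplied from atomicity of $\mathcal{L}$ alone --- by you or by anyone --- and some additional hypothesis is indispensable. For the final assertion about subrack lattices, your instinct to invoke complementedness \cite{KS21} is a reasonable repair strategy (note that the lattice above is not complemented: $d$ has no complement), but since you only gesture at it, the subrack case remains unproven in your proposal, exactly as it does, strictly speaking, in the paper, whose treatment of that case rests on the false general claim.
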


\begin{proof}
  Let $\mathcal{C}=\{c_1,\dots,c_m\}$ be the set of coatoms of $\mathcal{L}$ and let $x=c_{i_1}\wedge\dots\wedge c_{i_k}$ and $y=c_{j_1}\wedge\dots\wedge c_{j_l}$ be elements in $\mathsf{Inf}(\mathcal{L})$. Let $z$ be the unique minimum $\hat{0}$ of $\widehat{\mathsf{Inf}(\mathcal{L})}$ if $c_{i_1}\wedge\dots\wedge c_{i_k}\wedge c_{j_1}\wedge\dots\wedge c_{j_l}$ is the unique minimum of $\mathcal{L}$. Otherwise, let $z$ be the element $c_{i_1}\wedge\dots\wedge c_{i_k}\wedge c_{j_1}\wedge\dots\wedge c_{j_l}$. Clearly, $z\in \mathsf{Inf}(\mathcal{L})$ is the meet of $x$ and $y$. More generally, the meet of any pair of elements is well-defined in $\mathsf{Inf}(\mathcal{L})$. Since $\mathcal{L}$ is a finite lattice, by \cite[Proposition~3.3.1]{Sta12}, $\mathsf{Inf}(\mathcal{L})$ is a lattice. By its definition the lattice $\mathsf{Inf}(\mathcal{L})$ is coatomic. This proves the first statement.

  Next, suppose $\mathcal{L}$ is atomic. Let $\mathcal{A}=\{a_1,\dots,a_n\}$ be the set of atoms of $\mathcal{L}$. For each $i$, $1\leq i\leq n$, let $b_i$ be the meet of the elements in $\{c\in \mathcal{C} \mid a_i\leq c\}$. Clearly, the elements $b_i$, $1\leq i\leq n$, form the set of minimal elements of $\mathsf{Inf}(\mathcal{L})$. Notice that $\hat{1} = b_{1}\vee\dots\vee b_{n}$. Let $x$ be an arbitrary element of $\mathsf{Inf}(\mathcal{L})$. Since $\mathcal{L}$ is atomic, $x$ is the join of some atoms of $\mathcal{L}$, say $x=a_{i_1}\vee\dots\vee a_{i_k}$. Since $x=b_{i_1}\vee\dots\vee b_{i_k}$, we deduce that $\widehat{\mathsf{Inf}(\mathcal{L})}$ is atomic.

  By \cite[Corollary~2.6]{KS19}, the subrack lattice of a finite rack is atomic. Hence, by the previous observations the last statement follows.
\end{proof}

\begin{rem}
  Although the meet operation on $\mathcal{L}$ coincides with the meet operation on $\widehat{\mathsf{Inf}(\mathcal{L})}$, the join operations on those two lattices may not coincide.
\end{rem}

Consider a finite $d$-dimensional simplicial comlex $\Delta$. We say $\Delta$ is \emph{pure} if any of its simplices contained in a face of dimension $d$, i.e., maximal faces (facets) have the same dimension. Also, we say a finite poset $\mathcal{P}$ is \emph{pure} (or \emph{graded}) of \emph{rank} $n$ if all the maximal chains in $\mathcal{P}$ have the same finite length $n=\ell(\mathcal{P})$. In such a case there exists a unique \emph{rank function} $\rho\colon \mathcal{P}\to \{0,1,\dots,n\}$ such that $\rho(x)=0$ whenever $x$ is minimal in $\mathcal{P}$ and $\rho(y) = \rho(x) + 1$ whenever $y$ covers $x$. We say the rank of $x\in\mathcal{P}$ is $\rho(x)$.

A finite simplicial complex $\Delta$ is \emph{shellable} if its maximal faces can be arranged in linear order $F_1,F_2,\dots,F_t$ in such a way that the subcomplex $\left(\bigcup_{i=1}^{k-1}F_i\right) \cap F_k$ is pure and ($\dim F_k - 1$)-dimensional for all $k = 2,3,\dots,t$. Shellability is defined originally for pure simplicial complexes. The general version we give here is usually called `nonpure' shellability in literature (see \cite{BW96,BW97}). A shellable simplicial complex may be homeomorphic to a nonshellable simplicial complex; therefore, being shellable is not  topologically invariant. However, shellability entails strong topological properties.

\begin{thm}[see {\cite[Theorem~4.1]{BW96}} or {\cite[Theorem~3.1.3]{Wac07}}]\label{thm:shell}
  A shellable simplicial complex has the homotopy type of a wedge of spheres (in varying dimensions), where for each $i$, the number of $i$-spheres is the number of $i$-facets whose entire boundary is contained in the union of the earlier facets. 
\end{thm}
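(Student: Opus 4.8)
The plan is to induct on the number $t$ of facets in a shelling $F_1,\dots,F_t$, adjoining one facet at a time and tracking the homotopy type. Write $\Delta_k$ for the subcomplex generated by $F_1,\dots,F_k$; then $\Delta_{k-1}$ is again shellable under the restricted order, and by the inductive hypothesis it is homotopy equivalent to a wedge of spheres with one $i$-sphere for each $i$-facet among $F_1,\dots,F_{k-1}$ whose entire boundary lies in the earlier facets. For the inductive step the first task is purely combinatorial: to pin down which faces are created when $F_k$ is adjoined. Set $\mathcal{R}(F_k):=\{v\in F_k\mid F_k\setminus\{v\}\in\Delta_{k-1}\}$. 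Using the shelling hypothesis that $\Delta_{k-1}\cap\overline{F_k}$ (here $\overline{F_k}$ denotes the simplex of all subsets of $F_k$) is pure of dimension $\dim F_k-1$, this intersection is the subcomplex of $\overline{F_k}$ generated by the codimension-one faces $F_k\setminus\{v\}$ with $v\in\mathcal{R}(F_k)$; I would then check that a subset $\sigma\subseteq F_k$ fails to lie in $\Delta_{k-1}$ precisely when $\mathcal{R}(F_k)\subseteq\sigma$. Hence the faces of $\Delta_k$ not already present in $\Delta_{k-1}$ form the half-open interval $\{\sigma\mid\mathcal{R}(F_k)\subseteq\sigma\subseteq F_k\}$.

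With this interval in hand the induction splits into two cases. If $\mathcal{R}(F_k)\subsetneq F_k$, fix a vertex $v\in F_k\setminus\mathcal{R}(F_k)$ and pair each new face $\sigma$ with $\sigma\triangle\{v\}$; since $v\notin\mathcal{R}(F_k)$, both members of every pair are new faces, and processing them in order of decreasing dimension realises a sequence of elementary collapses $\Delta_k\searrow\Delta_{k-1}$. The homotopy type is therefore unchanged, consistent with the fact that such an $F_k$ is not counted, its boundary not being entirely contained in the earlier facets. If instead $\mathcal{R}(F_k)=F_k$, then the interval consists of the single face $F_k$, the whole boundary $\partial\overline{F_k}\cong S^{\dim F_k-1}$ already lies in $\Delta_{k-1}$, and $\Delta_k$ is obtained from $\Delta_{k-1}$ by attaching one $(\dim F_k)$-cell along its boundary sphere. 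The target conclusion is then $\Delta_k\simeq\Delta_{k-1}\vee S^{\dim F_k}$, which augments the wedge by exactly one $(\dim F_k)$-sphere, as required.

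The main obstacle is precisely this last assertion: attaching a cell along its boundary sphere produces a wedge with a new sphere only when the attaching map $\partial\overline{F_k}\to\Delta_{k-1}$ is null-homotopic, and this is where the (nonpure) shelling hypothesis must be used in an essential way, since $\Delta_{k-1}$ is itself only a wedge of spheres rather than contractible. The structural observation I would exploit is that when $\mathcal{R}(F_k)=F_k$ every codimension-one face $F_k\setminus\{v\}$ is non-maximal in $\Delta_{k-1}$, hence is absorbed into some earlier facet of strictly larger dimension; this is what forces the boundary sphere to bound in $\Delta_{k-1}$. I would make this rigorous either by a direct cofibration argument, contracting the contractible simplex $\overline{F_k}$ and analysing the resulting quotient, or, more robustly, by assembling all the facet-by-facet collapses into a single acyclic matching on the face poset of $\Delta$ whose unmatched cells are exactly the homology facets together with one base vertex, and then verifying that the induced boundary maps vanish, so that the associated CW complex has free homology concentrated on the homology facets and is a wedge of spheres. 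Establishing the vanishing of these attaching (equivalently boundary) maps is the delicate point; the remaining ingredients — the interval lemma and the elementary-collapse bookkeeping — are routine.
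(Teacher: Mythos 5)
Your interval lemma is correct (the faces added when $F_k$ is adjoined are exactly $\{\sigma \mid \mathcal{R}(F_k)\subseteq\sigma\subseteq F_k\}$), and the elementary-collapse argument in the case $\mathcal{R}(F_k)\subsetneq F_k$ is sound. But the case $\mathcal{R}(F_k)=F_k$ contains a genuine gap, and it is exactly the one you flag without closing: you must show the attaching map $\partial\overline{F_k}\to\Delta_{k-1}$ is null-homotopic, and none of your three suggested routes does this. The observation that every codimension-one face of $F_k$ is absorbed into an earlier facet of strictly larger dimension is, at best, a homological statement, and homological triviality of the attaching map is insufficient because $\Delta_{k-1}$ need not be simply connected (the inductively built wedge may contain $1$-spheres). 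Attaching a $2$-cell to $S^1\vee S^1$ along the commutator of the two generators is null in homology yet yields the torus, not a wedge of spheres; the same example defeats your discrete-Morse fallback, since a CW complex with free homology concentrated on its critical cells need not have wedge-of-spheres homotopy type. So an induction in the given shelling order that only controls homology cannot terminate in the stated conclusion.

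The missing idea --- and the way the cited sources (Bj\"orner--Wachs, and Wachs' survey; note the present paper does not prove the theorem but quotes it) actually argue --- is the \emph{rearrangement lemma}: any shelling can be reordered, remaining a shelling with the same homology facets, so that all facets $F$ with $\mathcal{R}(F)=F$ come last. After this reordering, the subcomplex $\Delta'$ generated by the non-homology facets is collapsible by precisely your pairing argument (it collapses step by step down to the cone $\overline{F_1}$, hence to a point), and every homology facet attaches along its entire boundary \emph{inside} $\Delta'$: the only new face contributed by a homology facet is that facet itself, so no homology facet can supply a proper face of another. Consequently $\Delta/\Delta'$ is homeomorphic to $\bigvee_j S^{\dim F_j}$, one sphere per homology facet, and since $\Delta'\hookrightarrow\Delta$ is a cofibration with $\Delta'$ contractible, $\Delta\simeq\Delta/\Delta'$. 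This sidesteps the null-homotopy problem entirely --- every attaching map lands in a contractible subcomplex --- which is exactly what your facet-by-facet induction cannot arrange.
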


\begin{rem}
  If a $d$-dimensional simplicial complex is pure and shellable, then it has the homotopy type of a wedge of $d$-spheres and it has nontrivial homology only in top dimension $d$.
\end{rem}

\section{Conjugacy class racks}\label{sec:ccr}

Consider the following question: For which racks $X$, the order complex $\Delta(\overline{\mathcal{R}(X)})$ has the homotopy type of a sphere? Theorem~\ref{thm:remA} below gives a partial answer to this question. Let $G$ be a group acting on a set $X$. We denote by $\mathsf{Orb}_G(a)$ the orbit of an element $a\in X$ under the action of $G$. If $S$ is a subset of $X$, we write $\mathsf{Stab}_G(S)$ to indicate the stabilizer of $S$.

\begin{thm}[see {\cite[Theorem~1]{Kay22}}]\label{thm:remA}
  Let $X$ be a finite rack having the following property: The poset $\overline{\mathcal{R}(X)}$ is non-empty and the equality $\mathsf{Stab}_{\mathsf{Inn}(X)}(M) = \mathsf{Inn}(X)$ holds for every maximal subrack $M$ of $X$. Then $\Delta(\overline{\mathcal{R}(X)})$ is homotopy equivalent to a $(m-2)$-sphere, where $m$ is the number of $\mathsf{Inn}(X)$-orbits in $X$.
\end{thm}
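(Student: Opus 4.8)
The plan is to use the hypothesis to pin down the coatoms (maximal subracks) of $\mathcal{R}(X)$ completely, and then feed this into the crosscut machinery of Theorem~\ref{thm:crosscut} (equivalently, into Proposition~\ref{prop:inf}) to recognize $\Delta(\overline{\mathcal{R}(X)})$ as the order complex of the proper part of a Boolean lattice.

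First I would record the elementary observation that a subset $Y\subseteq X$ is $\mathsf{Inn}(X)$-invariant if and only if it is a union of $\mathsf{Inn}(X)$-orbits, and that every such union is automatically a subrack: if $a,b\in Y$ then $a\triangleright b=\phi_a(b)\in Y$ and the unique $\triangleright$-preimage $\phi_a^{-1}(b)\in Y$, since $\phi_a,\phi_a^{-1}\in\mathsf{Inn}(X)$ preserve $Y$. The condition $\mathsf{Stab}_{\mathsf{Inn}(X)}(M)=\mathsf{Inn}(X)$ says precisely that each maximal subrack $M$ is $\mathsf{Inn}(X)$-invariant, hence a union of orbits. Enumerating the orbits as $O_1,\dots,O_m$, I would then show the coatoms are exactly the sets $M_i:=X\setminus O_i$. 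A maximal subrack omits at least one orbit, being proper; if it omitted two, say $O_i$ and $O_j$, then $M\cup O_i$ would again be a union of orbits, hence a subrack lying strictly between $M$ and $X$, contradicting maximality. Conversely each $M_i$ is maximal, since any subrack strictly above it lies in some coatom $M_k$, forcing $M_i\subsetneq M_k$, i.e.\ $O_k\subsetneq O_i$, which is impossible for distinct orbits. Along the way, the non-emptiness of $\overline{\mathcal{R}(X)}$ rules out the connected case $m=1$ (whose only invariant proper subrack is empty), so $m\geq 2$ and there are at least two coatoms.

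With the coatoms identified, the computation is immediate. They form a crosscut $\mathcal{C}=\{M_1,\dots,M_m\}$ of $\overline{\mathcal{R}(X)}$. For $I\subseteq[m]$ the family $\{M_i\mid i\in I\}$ has meet $\bigcap_{i\in I}M_i=\bigcup_{j\notin I}O_j$, which is a nonempty proper subrack, and hence a lower bound in $\overline{\mathcal{R}(X)}$, exactly when $I\neq[m]$; and since no proper subrack can contain two distinct maximal subracks, the family admits an upper bound only when $|I|\leq 1$. Therefore the crosscut complex $\Delta(\overline{\mathcal{R}(X)},\mathcal{C})$ consists of all proper subsets of $\mathcal{C}$, i.e.\ it is the boundary of the $(m-1)$-simplex, which is homeomorphic to $S^{m-2}$. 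By Theorem~\ref{thm:crosscut}, $\Delta(\overline{\mathcal{R}(X)})$ is homotopy equivalent to this boundary, giving the result. Equivalently, the same coatom computation identifies $\mathsf{Inf}(\mathcal{R}(X))$ with the proper part of the Boolean lattice $B_m$, so Proposition~\ref{prop:inf} yields the conclusion directly.

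The main obstacle, and the only genuinely rack-theoretic step, is the precise determination of the coatoms carried out in the second paragraph: translating the stabilizer hypothesis into the statement that maximal subracks are the complements of single orbits. Once that is in hand, the argument reduces entirely to the standard fact that the order complex of the proper part of $B_m$ is a sphere of dimension $m-2$.
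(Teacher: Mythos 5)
Your proof is correct and takes essentially the same route as the paper: the key step in both is identifying the maximal subracks as the complements of single $\mathsf{Inn}(X)$-orbits, after which the complex is recognized as (the order complex of) the proper part of $B_m$, i.e.\ a sphere $S^{m-2}$ --- your primary appeal to the crosscut theorem (Theorem~\ref{thm:crosscut}) and the paper's appeal to Proposition~\ref{prop:inf} are interchangeable, as you yourself note and as the paper's remark following Proposition~\ref{prop:inf} makes explicit. The only difference is one of detail: you spell out the coatom identification and the bound $m\geq 2$, which the paper compresses into a single ``observe that'' sentence.
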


\begin{proof}
  Suppose $X$ is a finite rack having the stated property. Observe that a subrack $S$ of $X$ is maximal if and only if it is the union of all but one $\mathsf{Inn}(X)$-orbits in $X$. Therefore, there are exactly $m$ maximal subracks and the poset $\mathsf{Inf}(\mathcal{R}(X))$ is isomorphic to the proper part of the Boolean algebra $B_m=2^{[m]}$. By Proposition~\ref{prop:inf}, $\Delta(\overline{\mathcal{R}(X)})$ is homotopy equivalent to $\Delta(\mathsf{Inf}(\mathcal{R}(X)))$. Since $\Delta(\overline{B_m})$ is a triangulation of $S^{m-2}$, we see that $\Delta(\overline{\mathcal{R}(X)})$ has the homotopy type of a $(m-2)$-sphere.
\end{proof}

Let $G$ be a finite group. In \cite[Proposition~1.3]{HSW19} it is shown that $\Delta(\overline{\mathcal{R}(G)})$ has the homotopy type of a $(c-2)$-sphere, where $c$ is the number of conjugacy classes in $G$. The crucial observation made in there is that the maximal subracks of a group rack are unions of all but one conjugacy classes, and hence, the map $f$ taking a subrack $S$ of $G$ to the union of conjugacy classes that intersects $S$ in a nonempty set is a closure operator on $\overline{\mathcal{R}(G)}$. The original proof of Theorem~\ref{thm:remA} given in \cite{Kay22} imitates the same approach by observing that the map $f$ taking a subrack $S$ to the union of $\mathsf{Inn}(X)$-orbits having a representative element in $S$ is a closure operator  on $\overline{\mathcal{R}(X)}$ whenever $X$ has the property stated in Theorem~\ref{thm:remA}. Of course, $f(\overline{\mathcal{R}(X)})$ is the same as $\mathsf{Inf}(\mathcal{R}(X))$ in such a case.

\begin{rem}
  In \cite[Theorem~1.2]{HSW19} it is shown that the subrack lattice $\mathcal{R}(G)$ is pure if and only if $G$ is abelian or it is isomorphic to one of $S_3$, $D_8$, or $Q_8$. On the other hand, $\mathsf{Inf}(\mathcal{R}(G))$ is necessarily pure and shellable for any finite group $G$.
\end{rem}

Let $C$ be a non-central conjugacy class in a finite nilpotent group $G$. In \cite[Corollary~8]{Kay22} it is observed that $C$ satisfies the conditions of Theorem~\ref{thm:remA}; thus, $\mathsf{Inf}(\mathcal{R}(C))$ is isomorphic to $\overline{B_m}$, where $m$ is the number of maximal elements in $\mathcal{R}(C)$. Following result can be compared with \cite[Theorem~1.2]{HSW19}.

\begin{prop}\label{prop:nil}
  Let $G$ be a finite nilpotent group and $p$ be a prime number. Suppose the $p$-part of $|G|$ is $p^t$ and $t>2$. If $C$ is a conjugacy class in $G$ with $p^{t-2}$ elements, then the subrack lattice $\mathcal{R}(C)$ is pure and its length is $(p-1)t+1$. Moreover, for $1\leq k\leq t$, the set of elements $x\in\mathcal{R}(C)$ satisfying $$(p-1)(k-1)+1 \leq \rho(x)\leq (p-1)k$$ form a subposet of $\mathcal{R}(C)$ which is isomorphic to the disjoint union of $p^{t-k}$ many $\overline{B_p}$.  
\end{prop}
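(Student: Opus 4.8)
The plan is to reduce to a $p$-group, erect a central tower beneath $C$, and induct along it. As $G$ is nilpotent it is the direct product of its Sylow subgroups, and since $|C|=p^{t-2}$ is a prime power the projection of $C$ to every Sylow subgroup other than the Sylow $p$-subgroup $P$ is a single central element; hence $C$ is isomorphic, as a rack, to a class $x^{P}$ with $[P:C_{P}(x)]=p^{t-2}$, so $|C_{P}(x)|=p^{2}$. A short order count then forces $|Z(P)|=p$ and $C_{P}(x)=\langle x,z\rangle$ with $\langle z\rangle=Z(P)$, so that $x$ has the smallest centralizer allowed for a non-central element. I would also record at once, via the discussion following Theorem~\ref{thm:remA}, that $C$ satisfies the hypotheses of that theorem, so $\mathsf{Inf}(\mathcal{R}(C))\cong\overline{B_{m}}$ where $m$ is the number of $\mathsf{Inn}(C)$-orbits; the induction below will show $m=p$, making this $\overline{B_{p}}$ the top band of the statement, and by Proposition~\ref{prop:inf} it already determines the homotopy type of $\Delta(\overline{\mathcal{R}(C)})$.

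Next I would build the tower. Setting $\bar{P}=P/Z(P)$, $\bar{x}=xZ(P)$ and $\bar{C}=\bar{x}^{\bar{P}}$, the quotient induces a surjective rack map $\pi\colon C\to\bar{C}$. The structural lemma I would prove is that each fiber $\pi^{-1}(\bar c)=cZ(P)\cap C$ has exactly $p$ elements and is a \emph{trivial} subrack, with $Z(P)$ acting simply transitively on it; this is a direct commutator computation using that $z$ is central. One then checks that $\bar{x}$ inherits the minimal-centralizer property in $\bar{P}$ (so $|Z(\bar P)|=p$ and $|C_{\bar P}(\bar x)|=p^{2}$), so that $\bar C$ is a class of the very same type with the parameter dropped by one. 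Iterating yields a tower of surjections with trivial $p$-element fibers whose base is a single point, the first nontrivial stage being a trivial $p$-rack whose subrack lattice is the Boolean algebra $B_{p}$. Each stage of this tower is what will contribute one band of $p-1$ consecutive ranks.

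The heart of the proof is to read off $\mathcal{R}(C)$ from the tower, proving purity and the band decomposition together by induction on $t$. A subrack $S\subseteq C$ projects to a subrack $\bar S=\pi(S)$ and is recorded by the nonempty subsets $S_{\bar c}=S\cap\pi^{-1}(\bar c)$ it cuts out in the fibers over $\bar S$. Identifying each fiber with $Z(P)\cong\mathbb{Z}/p$, the computation $s\triangleright s'=(c\triangleright c')\,z^{\beta}$ (where $\beta$ is the coordinate of $s'$) shows that the closure condition is \emph{affine}: the coordinate of $s\triangleright s'$ is that of $s'$ shifted by an offset depending only on the basepoints and independent of $s$. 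Consequently the sets $S_{\bar c}$ share a common cardinality and are forced translates of one another along each conjugation orbit inside $\bar S$, the only genuine freedom being a single subset per orbit, constrained by the holonomy of that orbit: if the holonomy is nontrivial the orbit must be taken \emph{full}, and otherwise every subset of a fiber is admissible. Matching these two regimes against the inductive description of $\mathcal{R}(\bar C)$ is what produces, level by level, the disjoint copies of $\overline{B_{p}}$, the number of copies in a band being the number of points at the corresponding level, a power of $p$.

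Finally, purity and the rank statement follow from the tower. I would define the rank of $S$ so that crossing from one fiber level to the next costs $p-1$, making each band occupy $p-1$ consecutive ranks and forcing every maximal chain to traverse the bands in lockstep; this is exactly the assertion of purity, and stacking the bands yields the rank function and length recorded in the statement. (Alternatively one can exhibit a shelling and quote Theorem~\ref{thm:shell} for the homotopy type.) I expect the genuine obstacle to be the closure analysis of the inductive step—pinning down the holonomy dichotomy and showing it glues into precisely the disjoint-Boolean bands, and in particular that no maximal chain can \emph{skip} a band. Controlling this coupling between fibers, rather than the reduction to $P$ or the count of orbits, is where the real work lies.
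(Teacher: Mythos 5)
Your route is genuinely different from the paper's. The paper builds the multilayered block structure top-down: it sets $H=\langle C\rangle$, uses the fact that the maximal subracks are the unions of all but one $H$-orbits, then recursively passes to the subgroup generated by a block and to its orbits, the key dynamical input being that an element from one block permutes the $p$ sub-blocks of another block in a single $p$-cycle. You build the same structure bottom-up, through the central tower $P\to P/Z(P)\to\cdots$, with the $Z(P)$-cosets as the finest blocks and an affine/holonomy description of arbitrary subracks over the fibration $\pi\colon C\to\overline{C}$. Your preliminary steps are sound: the reduction to $P$ with $|C_P(x)|=p^2$ and $|Z(P)|=p$, the identification of the top band with $\mathsf{Inf}(\mathcal{R}(C))\cong\overline{B_m}$ via Theorem~\ref{thm:remA}, and the computation $s\triangleright s'=(c\triangleright c')z^{\beta}$ with its consequence that fiber-subsets are translated along the inner action and, being invariant under a nontrivial translation when the holonomy is nontrivial, must be empty or full since $p$ is prime. (Two remarks: the fiber claim $cZ(P)\subseteq C$ is not a ``direct commutator computation''; it is equivalent to the inherited property $|C_{\overline{P}}(\overline{x})|=p^2$, so your two claims are really one and both need the quotient argument. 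On the plus side, you avoid the paper's incidental claim that the elements of $C$ have order $p$, which in fact fails, e.g., for the class of size $p$ in the modular group of order $p^3$.)

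The genuine gap is exactly the step you flag and defer: nothing in your proposal excludes the trivial-holonomy regime across distinct fibers, i.e., \emph{partial sections} --- subracks $S$ with $|\pi(S)|\geq 2$ meeting some fiber properly. Any such subrack destroys purity and the band decomposition, so until it is excluded the whole statement is unproven; your text even suggests both regimes genuinely occur and ``match'' against $\mathcal{R}(\overline{C})$, which would not give disjoint Boolean bands. The missing lemma is: two elements of $C$ lying in distinct $Z(P)$-cosets never commute, equivalently $C\cap C_P(c)=cZ(P)$. This is provable: if $gcg^{-1}=c'\in C_P(c)\setminus cZ(P)$, then $C_P(c')=C_P(c)$, so $g$ normalizes the order-$p^2$ group $C_P(c)$ and induces on it an automorphism of $p$-power order fixing $z$ and sending $c\mapsto c'$; when $C_P(c)$ is cyclic this forces $c'^p=c^p$, hence $c'\in cZ(P)$, and when $C_P(c)\cong \mathbb{Z}/p\times\mathbb{Z}/p$ the induced map has eigenvalues $1$ and $i$ with $c'=c^iz^j$, so its order is prime to $p$ unless $i\equiv 1$. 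Granting the lemma, the induction closes: if $|\pi(S)|\geq 2$, then $\pi(S)$ either lies in a single $Z(\overline{P})$-coset or, by the inductive band structure of $\mathcal{R}(\overline{C})$, contains a full one; in both cases each $s\in S$ admits $s'\in S$ in a different fiber with $[s,s']\in Z(P)\setminus\{1\}$, and iterating $\phi_{s'}$ translates $s$ through its entire fiber, forcing fullness. Finally, the count: your tower has $t-2$ stages, so stacking bands of width $p-1$ yields length $(p-1)(t-2)+1$ and $p^{t-2-k}$ copies of $\overline{B_p}$ in band $k$, \emph{not} the $(p-1)t+1$ and $p^{t-k}$ you assert you recover; indeed the paper's own example (a class of $27$ elements in a group of order $3^5$, all maximal chains of length $7$) matches the tower count and not the displayed formulas. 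So the claim that stacking ``yields the length recorded in the statement'' is precisely the computation you never performed, and performing it shows the statement's indexing must be corrected before anything can be proved.
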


\begin{proof}
  Let $P$ be a Sylow $p$-subgroup of $G$. Since $G$ is nilpotent, $P$ is the unique Sylow $p$-subgroup of $G$ and, by assumption, its order is $p^t$, $t>2$. Moreover, $C$ is a conjugacy class of $P$ (see, for example, \cite[Chapter~5]{Rot95} for the basic properties of nilpotent groups). By \cite[Lemma~9]{Kay22}, $H=\langle C\rangle$ is a proper subgroup of $P$ and the action of $H$ on $C$ by conjugation is not transitive. Moreover, the maximal subracks of $C$ are unions of all but one $H$-orbits.

  Observe that the subgroup $H$ is normal in $P$ as it is generated by a conjugacy class of $P$. Therefore, the $H$-orbits form a system of blocks for $P$ (see \cite[Theorem~1.6A]{DM96}). In particular, the size of any $H$-orbit must be the same. Notice that the order of $H$ must be $p^{t-1}$ as it is generated by $C$ and $C$ contains $p^{t-2}$ elements by assumption.

  Take an element $a\in C$. The centralizer $C_P(a)$ is of order $p^2$ and since $a$ is not in the center of $P$, the orders of $a$ and $Z(P)$ are the same and equal to $p$. Since $H$ is a nilpotent group, the same argument applies, and so, the order of $C_H(a)$ is $p^2$ as well. However, that means the size of an $H$-orbit must be $p^{t-3}$. 

  Let $C'$ be an $H$-orbit and $K$ be the subgroup of $H$ generated by the elements of $C'$. Notice that the order of $K$ must be $p^{t-2}$. Consider the action of $K$ on $C'$. Similar arguments applied to $K$-orbits yield that the sizes of all $K$-orbits are the same. Let $C''$ be the $K$-orbit containing $a$. The size of $C''$ is $p^{t-4}$ and in total there are $p$ distinct $K$-orbits. Consider the action of $H$ on $K$-orbits. Since there are $p$ different $K$-orbits, we have $$ p=\frac{|H|}{|\mathsf{Stab}_H(C'')|}. $$ However, the last equality implies that $\mathsf{Stab}_H(C'')=K$ as the order of both groups is $p^{t-2}$. Let $D'$ be an $H$-orbit different from $C'$ and $L$ be the subgroup of $H$ generated by the elements of $D'$. Let $b$ be an element in $D'$. Suppose $b$ lies in $K$. Since $K$ and $L$ are both normal subgroups of $H$, their intersection is also normal in $H$. However, this implies that $K$ contains $C'\cup D'$. Let $U$ be the intersection of $K$ with $C$. From the previous statement $U$ is a union of at least two $H$-orbits. Clearly, $U$ cannot contain all of $C$ as in this case order of $K$ must be greater that $|C|=p^{t-2}$. Then $p^{t-3}<|U|<p^{t-2}$. Observe that conjugating $K$ with elements of $P$ we can cover $C$ as the disjoint union of conjugates of $U$. However, this is also not possible as $p$ is a prime number and there are in total $p$ distinct $H$-orbits.

  Let $B:=\langle b\rangle$ and consider $\mathsf{Orb}_B(a)$. Recall that $a\in C''$ and $b\in D'$. The arguments presented so far has the following implications: The element $b$ has order $p$ and it does not fix $C''$. Moreover, since the number of $K$-orbits in $C'$ is $p$, there are exactly $p$ elements in $\mathsf{Orb}_B(a)$ each from a different $K$-orbit. Notice that $a\in C''$ and $b\in C\setminus C'$ are arbitrarily chosen; thus, similar conclusions can be made for any element $a\in C''$ and $b\in C\setminus C'$.

  Same arguments apply for the other $H$-orbits (conjugates of $C'$). Also, similar deductions can be made at a deeper level by considering the subgroup generated by the elements in $C''$ and the partition of $C''$ into conjugacy classes of that subgroup. Accordingly, we may partition $C$ to $H$-orbits, then each $H$-orbit can be partitioned into smaller sets, etc. And those multilayered partitions determine the subracks generated by specific subsets of $C$. Accordingly, all the maximal chains in $\mathcal{R}(C)$ have the same length $(p-1)t+1$ and the subposet of $\mathcal{R}(C)$ that consists of all points whose ranks change from $(p-1)(k-1)+1$ to $(p-1)k$ is isomorphic to the disjoint union of $p^{t-k}$ many $\overline{B_p}$. 
\end{proof}

\begin{rem}
  Maybe the last statement in the above proof requires some explanation. For example, suppose $p=3$ and $t=3$. We can label the elements of $C$ with the integers $1$ to $27$ so that the multilayered partitions can be described succinctly as
  \begin{align*} &[\left[\left[[1][2][3]\right]\left[[4][5][6]\right]\left[[7][8][9]\right]\right]\\ &\left[\left[[10][11][12]\right]\left[[13][14][15]\right]\left[[16][17][18]\right]\right]\\
    &\left[\left[[19][20][21]\right]\left[[22][23][24]\right]\left[[25][26][27]\right]\right]].
  \end{align*}
  From this, for example, we conclude that the subrack generated by $1, 10$, and $19$ is the whole of $C$. Observe that every maximal chain in $\mathcal{R}(C)$ has the same length $7$. Figure~\ref{fig:nilcc} depicts the Hasse diagram of $\mathcal{R}(C)$.
\end{rem}

\begin{figure}[!ht]
  \centering
    \includegraphics[width=0.95\textwidth]{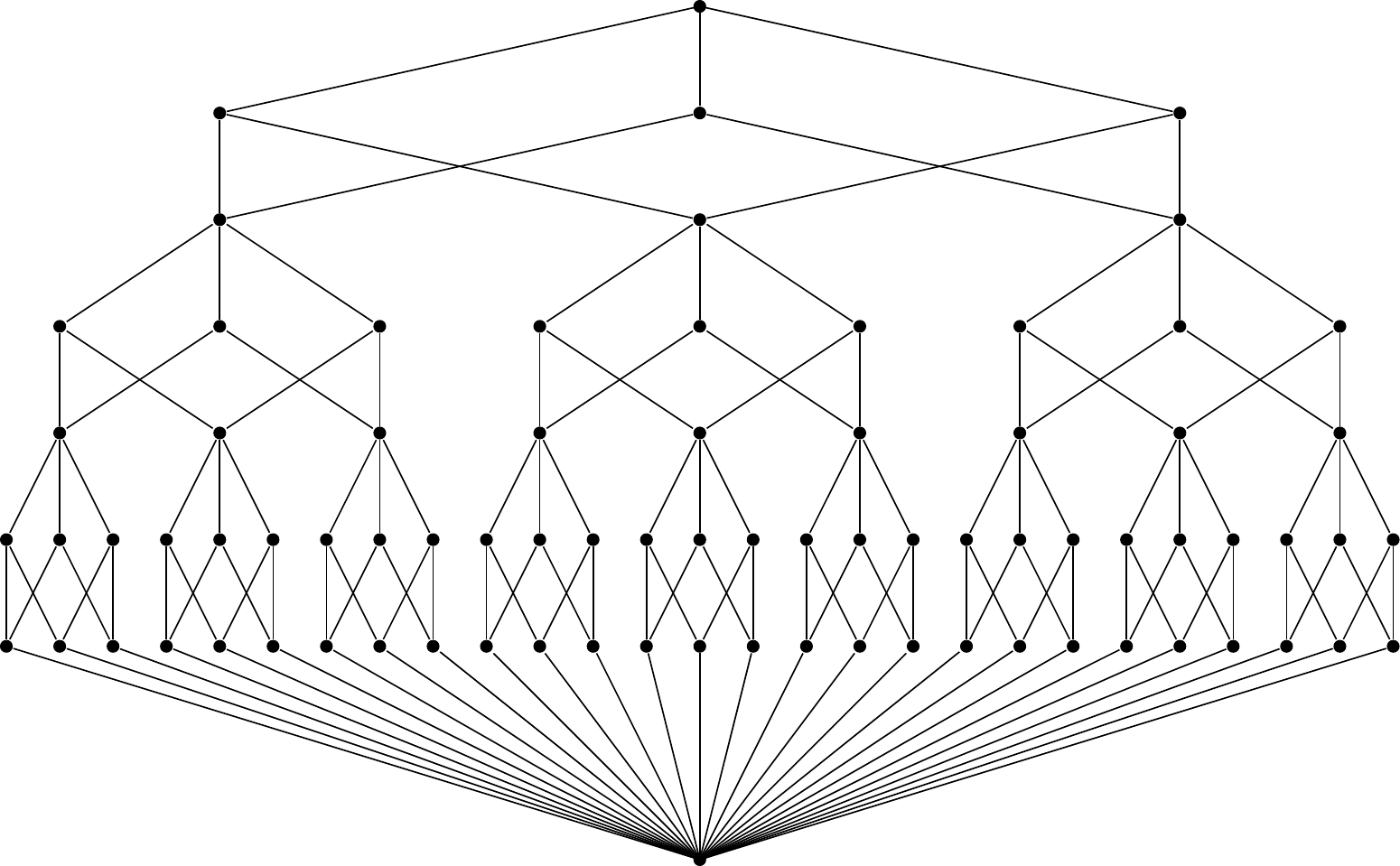}
    \caption{The subrack lattice of the conjugacy class $C$ with $27$ elements of a group of order $243$.}
    \label{fig:nilcc}
  \end{figure}

As is already observed any rack satisfying the property stated in Theorem~\ref{thm:remA} is necessarily not connected. Let $X$ be a rack and $S$ be a subrack of $X$. We define the \emph{orbit structure} of $S$ as the partition of $X$ into $\langle \Phi(S) \rangle$-orbits. Consider the map $\eta$ taking a subrack $S$ into $\widetilde{S}$, where $\widetilde{S}$ is the largest subrack of $X$ having the same orbit structure with $S$. Another observation made in \cite[Lemma~9]{Kay22} is that the map $\eta$ is a closure operator on $\overline{\mathcal{R}(X)}$ if $X$ is connected. Notice that the map $\pi\colon\eta(\mathcal{R}(X))\to \Pi_{|X|}$ taking subracks to their orbit structures (after relabeling elements of $X$ with $\{1,2,\dots,|X|\}$) is an injective poset map.

\begin{thm}\label{thm:pi}
  Let $X$ be a finite connected rack and $\mathcal{M}$ be the set of maximal subracks of $X$. Then $\mathsf{Inf}(\mathcal{R}(X))$ is a subposet of $\eta(\mathcal{R}(X))$; hence, $\mathsf{Inf}(\mathcal{R}(X))$ and $\pi(\mathsf{Inf}(\mathcal{R}(X)))$ are isomorphic posets. Moreover, $\mathsf{Inf}(\mathcal{R}(X))$ is isomorphic to a subposet of $\mathsf{Inf}(\Pi_{|X|},\pi(\mathcal{M}))$.
\end{thm}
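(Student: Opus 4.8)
The plan is to run everything off a single \emph{transfer principle} relating subrack inclusion to refinement of orbit structures: for an arbitrary subrack $S$ and an $\eta$-fixed subrack $M=\widetilde{M}$ (in particular any maximal subrack),
$$ S\subseteq M \iff \pi(S)\leq \pi(M). $$
I would prove this first. The forward direction is immediate: $S\subseteq M$ gives $\langle\Phi(S)\rangle\subseteq\langle\Phi(M)\rangle$, and since $\pi(M)$ is by definition the partition into $\langle\Phi(M)\rangle$-orbits, the $\langle\Phi(S)\rangle$-orbits refine the $\pi(M)$-blocks. For the converse, if $\pi(S)\leq\pi(M)$ then every $\pi(M)$-block is a union of $\langle\Phi(S)\rangle$-orbits, so each element of $\langle\Phi(S)\rangle$ fixes every $\pi(M)$-block setwise; hence the group generated by $\Phi(S)\cup\Phi(M)$ preserves the partition $\pi(M)$ and, containing $\langle\Phi(M)\rangle$, is transitive on each block. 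Since this group is exactly the inner group of the subrack generated by $S\cup M$, that subrack has orbit structure $\pi(M)$, and by maximality of $\widetilde{M}$ it lies in $\widetilde{M}=M$; in particular $S\subseteq M$.

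For the first two assertions I would proceed as follows. That $\mathsf{Inf}(\mathcal{R}(X))\subseteq\eta(\mathcal{R}(X))$ comes directly from $\eta$ being a closure operator on $\overline{\mathcal{R}(X)}$: a maximal subrack $M$ satisfies $M\subseteq\eta(M)\in\overline{\mathcal{R}(X)}$, so maximality forces $\eta(M)=M$, i.e.\ every coatom is $\eta$-fixed; if $S=\bigcap_i M_i$ is a meet of coatoms, monotonicity gives $\eta(S)\subseteq\eta(M_i)=M_i$ for all $i$, hence $\eta(S)\subseteq S$, while extensivity gives $S\subseteq\eta(S)$, so $S=\eta(S)\in\eta(\mathcal{R}(X))$. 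Since $\pi$ is an injective poset map on $\eta(\mathcal{R}(X))$ and the transfer principle (applied with $M=T$) shows it is order-reflecting there, its restriction is an order embedding, whence $\mathsf{Inf}(\mathcal{R}(X))\cong\pi(\mathsf{Inf}(\mathcal{R}(X)))$.

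For the final assertion the key point is that $\pi$ itself is \emph{not} the right map: its image need not lie in $\mathsf{Inf}(\Pi_{|X|},\pi(\mathcal{M}))$, because $\pi(\bigcap_i M_i)$ can be strictly finer than $\bigwedge_i\pi(M_i)$. Instead I would define $\lambda\colon\mathsf{Inf}(\mathcal{R}(X))\to\overline{\Pi_{|X|}}$ by $\lambda(S):=\bigwedge_{M\in\mathcal{A}_S}\pi(M)$, where $\mathcal{A}_S:=\{M\in\mathcal{M}\mid S\subseteq M\}$, so that $\lambda(S)$ is a meet of a nonempty subset of $\pi(\mathcal{M})$ and thus a candidate element of $\mathsf{Inf}(\Pi_{|X|},\pi(\mathcal{M}))$. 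Writing $S=\bigcap_{M\in\mathcal{A}_S}M$, the inclusion $S\subseteq S'$ gives $\mathcal{A}_S\supseteq\mathcal{A}_{S'}$ and hence $\lambda(S)\leq\lambda(S')$, so $\lambda$ is order-preserving. Order-reflection is the crux and the step I expect to be the main obstacle: given $\lambda(S)\leq\lambda(S')$ and any $M_0\in\mathcal{A}_{S'}$, I would chain $\pi(S)\leq\bigwedge_{M\in\mathcal{A}_S}\pi(M)=\lambda(S)\leq\lambda(S')\leq\pi(M_0)$, whence $\pi(S)\leq\pi(M_0)$ and, by the transfer principle, $S\subseteq M_0$; intersecting over $M_0\in\mathcal{A}_{S'}$ gives $S\subseteq S'$, so $\lambda$ is an order embedding. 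It remains to check properness: assuming $|X|>1$, connectedness makes all the $\phi_a$ conjugate and nontrivial, so $\langle\Phi(S)\rangle$ moves points and $\pi(S)\neq\hat{0}$, while $\pi(M)\neq\hat{1}$ for each maximal $M$ (else $\widetilde{M}=X$); since $\pi(S)\leq\lambda(S)$, this places $\lambda(S)$ in $\overline{\Pi_{|X|}}$ and hence in $\mathsf{Inf}(\Pi_{|X|},\pi(\mathcal{M}))$. This exhibits $\lambda$ as a poset isomorphism onto a subposet of $\mathsf{Inf}(\Pi_{|X|},\pi(\mathcal{M}))$, completing the proof.
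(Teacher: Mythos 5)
Your proof is correct and follows essentially the same route as the paper's: both first show that maximal subracks, and hence all meets of them, are fixed by $\eta$, and both obtain the final statement via the very same map $S\mapsto\bigwedge_{M\in\mathcal{M},\,S\subseteq M}\pi(M)$. Your ``transfer principle'' packages the order-reflection argument that the paper leaves largely implicit (the paper explicitly verifies only injectivity of this map, and omits the check that its values land in $\overline{\Pi_{|X|}}$), so your write-up is somewhat more complete in its bookkeeping, but the underlying ideas coincide.
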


\begin{proof}
  Let $\mathcal{N}$ be a set of maximal subracks of $\mathcal{R}(X)$ and $S$ be the intersection of the subracks in $\mathcal{N}$. Suppose $S\neq \emptyset$. Notice that $S$ is an arbitrary element of $\mathsf{Inf}(\mathcal{R}(X))$. We want to show that $S=\widetilde{S}$. If $\mathcal{N}$ is a singleton, the equality $S=\widetilde{S}$ must hold as $X$ is a connected rack. Suppose there are more than one elements in $\mathcal{N}$. Let $a$ be an element of $X$ preserving the orbit structure of $S$, i.e., $a\triangleright O = O$ for any orbit $O$ in the orbit structure of $S$. Clearly, $a$ preserves orbit structures of each element in $\mathcal{N}$; hence, is an element of $S$ as it lies in each of the maximal subracks containing $S$. In other words $S=\widetilde{S}$. Since $\eta$ is a closure operator on $\overline{\mathcal{R}(X)}$ and $S\in\eta(\overline{\mathcal{R}(X)})$, the restriction of $\eta$ to the subposet $\mathsf{Inf}(\mathcal{R}(X))$ is just the identity function. Moreover, since $\pi\colon\eta(\mathcal{R}(X))\to \Pi_{|X|}$ is an injective poset map, we see that there is a bijective correspondence between $\mathsf{Inf}(\mathcal{R}(X))$ and the image set $\pi(\mathsf{Inf}(\mathcal{R}(X)))$. This proves the first statement.

  Recall that $\mathcal{M}$ is defined as the set of maximal subracks of $X$. Next, we shall show that $\mathsf{Inf}(\mathcal{R}(X))$ is isomorphic to a subposet of $\mathsf{Inf}(\Pi_{|X|},\pi(\mathcal{M}))$. To this end let's define the map $f\colon \mathsf{Inf}(\mathcal{R}(X)) \to \mathsf{Inf}(\Pi_{|X|},\pi(\mathcal{M}))$ in the following way: For an element $S\in \mathsf{Inf}(\mathcal{R}(X))$, the map $f$ takes $S$ to the the meet of the orbit structures of the maximal subracks of $X$ containing $S$. One can easily observe that $f$ is a poset map. Let $T$ be an element of $\mathsf{Inf}(\mathcal{R}(X))$ that is different from $S$. Without loss of generality we may assume that $T$ is contained by a maximal subrack of $X$ which does not contain $S$. However, this implies $f(S)$ and $f(T)$ are different partitions as $S$ would not preserve the partition $f(T)$ in such a case.

\end{proof}

\begin{example}\label{ex:ex1}
  Let $A=\{a,b,c,d\}$ and $S_A$ be the symmetric group on $A$. By \cite[Example~2.3]{HSW19} we know that $\mathcal{R}(T)$ is isomorphic to the partition lattice $\Pi_4$, where $T$ is the conjugacy class of the transposition $(a,b)$ in $S_A$. Notice that $\overline{\mathcal{R}(T)}$ and $\mathsf{Inf}(\mathcal{R}(T))$ are the same posets. There are six elements of $T$ and we relabel them as $$ (a,b)\to 1,\, (c,d)\to 2,\, (a,c)\to 3,\, (b,d)\to 4,\, (a,d)\to 5,\, (b,c)\to 6. $$ Let $\mathcal{M}$ be the set of maximal subracks of $T$. By Theorem~\ref{thm:pi} the proper part of the subrack $\mathcal{R}(T)$ is isomorphic to the poset $\pi(\mathsf{Inf}(\mathcal{R}(T)))$ (see Figure~\ref{fig:ex1}).

\begin{figure}[!ht]
  \centering
    \includegraphics[width=0.95\textwidth]{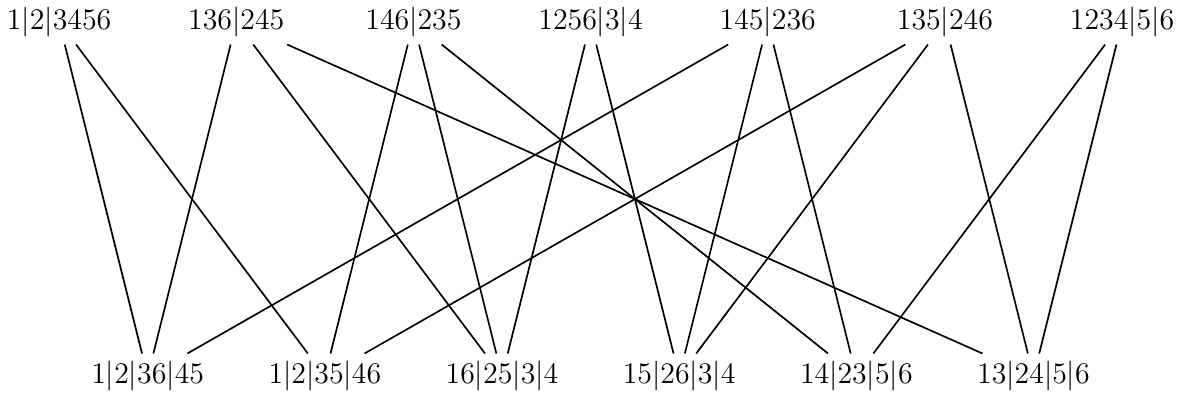}
    \caption{The poset $\pi(\mathsf{Inf}(\mathcal{R}(T)))$ in Example~\ref{ex:ex1}.}
    \label{fig:ex1}
  \end{figure}

\end{example}

It is natural to ask for a connected rack $X$ whether $\mathsf{Inf}(\mathcal{R}(X))$ and $\eta(\overline{\mathcal{R}(X)})$ are the same subposets of $\mathcal{R}(X)$. The next example shows that this is not necessarily the case.

\begin{example}\label{ex:ex2}
  The group $$ G = \langle a,b,c \mid a^4 = b^4 = c^3 = (ac)^3 = 1, ab=ba, cac^{-1}=b \rangle $$ is of order $48$ and the conjugacy class $C$ of the element $c$ consists of $16$ elements, namely $$ C = \{c,ac,a^2c,a^3c,bc,abc,a^2bc,a^3bc,b^2c,ab^2c,a^2b^2c,a^3b^2c,b^3c,ab^3c,a^2b^3c,a^3b^3c\}. $$ All connected quandles having less than $48$ elements are available in \textsf{Rig}~\cite{rig}, a \textsf{GAP} package designed for computations related to racks and quandles. The conjugacy class $C$ is connected as a quandle and it is isomorphic to SmallQuandle(16,1) in \textsf{Rig}. 

  \begin{figure}[!ht]
  \centering
  \includegraphics[width=0.95\textwidth]{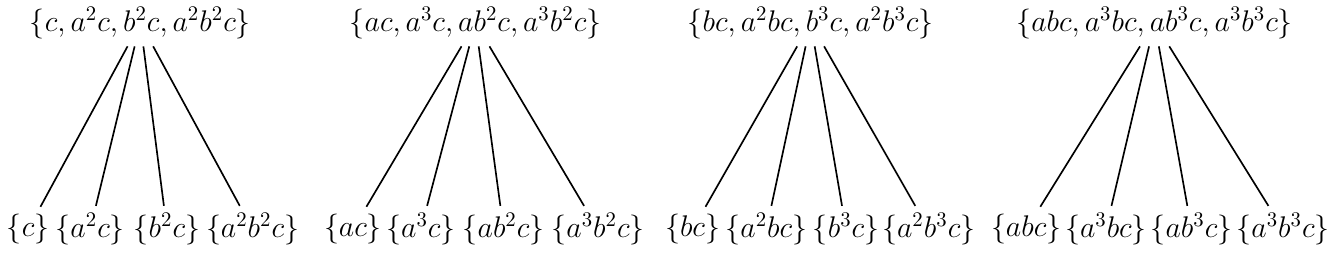}
    \caption{The proper part of the subrack lattice of the conjugacy class $C$ in Example~\ref{ex:ex2}.}
    \label{fig:ex2}
  \end{figure}

  As is apparent from Figure~\ref{fig:ex2}, there is exactly one proper subrack of $C$ containing $c$ properly. Moreover, the orbit structures of $\{c\}$ and $\{c,a^2c,b^2c,a^2b^2c\}$ are different.
  
\end{example}

For a finite connected rack $X$, it is possible to determine whether the subrack lattice $\mathcal{R}(X)$ is pure by focusing on a smaller part of $\mathcal{R}(X)$.

\begin{lem}\label{lem:filter}
  Let $\mathcal{P}$ be a finite poset and $x$ be a minimal element of $\mathcal{P}$. Suppose the automorphism group of $\mathcal{P}$ acts on the set of minimal elements of $\mathcal{P}$ transitively. Then $\mathcal{P}$ is pure if and only if $\mathcal{P}_{\geq x}$ is pure.
\end{lem}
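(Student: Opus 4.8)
The plan is to reduce purity of $\mathcal{P}$ to purity of the filters $\mathcal{P}_{\geq x'}$ over the various minimal elements $x'$, and then to use the hypothesis that $\mathrm{Aut}(\mathcal{P})$ is transitive on minimal elements to see that the single filter $\mathcal{P}_{\geq x}$ already controls all the others. The observation on which everything rests is this: for \emph{any} minimal element $x'$ of $\mathcal{P}$, the maximal chains of $\mathcal{P}_{\geq x'}$ are precisely the maximal chains of $\mathcal{P}$ whose bottom element is $x'$.

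First I would verify this correspondence using that $\mathcal{P}_{\geq x'}$ is an up-set. If $y,z\in\mathcal{P}_{\geq x'}$ and $y\le w\le z$ in $\mathcal{P}$, then $w\ge y\ge x'$, so $w\in\mathcal{P}_{\geq x'}$; consequently a cover relation inside $\mathcal{P}_{\geq x'}$ coincides with a cover relation in $\mathcal{P}$, and a maximal element of $\mathcal{P}_{\geq x'}$ is maximal in $\mathcal{P}$. Since $x'$ is minimal in $\mathcal{P}$ it is the unique minimum of $\mathcal{P}_{\geq x'}$, so a maximal chain of $\mathcal{P}_{\geq x'}$ is a saturated chain from $x'$ to a maximal element of $\mathcal{P}$; as it can be extended neither downward ($x'$ is minimal) nor upward (its top is maximal), it is a maximal chain of $\mathcal{P}$, and conversely any maximal chain of $\mathcal{P}$ with bottom $x'$ lies entirely in $\mathcal{P}_{\geq x'}$ and is maximal there. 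With this in hand the forward direction is immediate and uses nothing about automorphisms: if $\mathcal{P}$ is pure of rank $n$, then every maximal chain of $\mathcal{P}$ with bottom $x$ has length $n$, so $\mathcal{P}_{\geq x}$ is pure of rank $n$.

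For the converse, assume $\mathcal{P}_{\geq x}$ is pure of rank $n$. Every maximal chain $\sigma$ of $\mathcal{P}$ has a minimal bottom element $x'$, and by the correspondence $\sigma$ is a maximal chain of $\mathcal{P}_{\geq x'}$. By transitivity there is $g\in\mathrm{Aut}(\mathcal{P})$ with $g(x')=x$; since $g$ restricts to a poset isomorphism $\mathcal{P}_{\geq x'}\to\mathcal{P}_{\geq x}$ and isomorphisms preserve lengths of chains, $\mathcal{P}_{\geq x'}$ is pure of rank $n$, whence $\ell(\sigma)=n$. As $\sigma$ was arbitrary, all maximal chains of $\mathcal{P}$ have length $n$ and $\mathcal{P}$ is pure. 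The step that carries the real content is this converse: purity is a statement about \emph{all} maximal chains, each of which lives in the filter over its own minimal bottom element, and it is precisely the group action that forces all these filters to be isomorphic to the one filter we are permitted to inspect. The chain correspondence and the forward implication are routine once the up-set structure of $\mathcal{P}_{\geq x'}$ is exploited.
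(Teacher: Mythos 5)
Your proof is correct and takes essentially the same route as the paper's: the paper's one-line argument (that any interval of $\mathcal{P}$ is isomorphic to one in $\mathcal{P}_{\geq x}$ and vice versa) is precisely your combination of the chain correspondence with the isomorphism $\mathcal{P}_{\geq x'}\cong\mathcal{P}_{\geq x}$ supplied by the transitive automorphism action. You have simply written out in full the details that the paper leaves implicit.
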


\begin{proof}
  Any interval in $\mathcal{P}$ is isomorphic to an interval in $\mathcal{P}_{\geq x}$ and vice versa.
\end{proof}

\begin{cor}
  Let $X$ be a finite connected rack, $S$ be a minimal element of $\overline{\mathcal{R}(X)}$. Then $\mathcal{R}(X)$ is pure if and only if $\overline{\mathcal{R}(X)}_{\geq S}$ is pure.
\end{cor}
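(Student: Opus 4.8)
The plan is to invoke Lemma~\ref{lem:filter} with $\mathcal{P}=\overline{\mathcal{R}(X)}$ and $x=S$, and then transfer the resulting equivalence from the proper part $\overline{\mathcal{R}(X)}$ back to the full lattice $\mathcal{R}(X)$. The two things to check are that the minimal elements of $\overline{\mathcal{R}(X)}$ form a single orbit under the automorphism group, and that purity of $\mathcal{R}(X)$ and of $\overline{\mathcal{R}(X)}$ are equivalent.

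First I would describe the minimal elements of $\overline{\mathcal{R}(X)}$: they are exactly the atoms of $\mathcal{R}(X)$, i.e., the minimal nonempty subracks of $X$ (the existence of $S$ guarantees $X$ is not itself an atom, so no atom is removed when passing to the proper part). A key point is that each element of $X$ lies in a \emph{unique} atom: if two minimal subracks shared an element, their intersection would be a nonempty subrack contained in each, and minimality would force the two to coincide.

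Next I would verify the transitivity hypothesis of Lemma~\ref{lem:filter}. Every $g\in\mathsf{Inn}(X)$ is a rack automorphism and hence induces a poset automorphism $T\mapsto g\cdot T$ of $\mathcal{R}(X)$, restricting to $\overline{\mathcal{R}(X)}$. Given two atoms $A$ and $A'$, choose $a\in A$ and $a'\in A'$; since $X$ is connected, $\mathsf{Inn}(X)$ acts transitively on $X$, so some $g$ satisfies $g\cdot a=a'$. Then $g\cdot A$ is an atom containing $a'$, and by the uniqueness established above it must equal $A'$. Thus the automorphism group of $\overline{\mathcal{R}(X)}$ acts transitively on its minimal elements, and Lemma~\ref{lem:filter} applies to give that $\overline{\mathcal{R}(X)}$ is pure if and only if $\overline{\mathcal{R}(X)}_{\geq S}$ is pure.

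Finally I would relate this to $\mathcal{R}(X)$. Since $\mathcal{R}(X)$ is bounded with minimum $\emptyset$ and maximum $X$, every maximal chain runs from $\emptyset$ to $X$; deleting these endpoints gives a maximal chain of $\overline{\mathcal{R}(X)}$ (its bottom an atom, its top a coatom), and conversely every maximal chain of $\overline{\mathcal{R}(X)}$ extends uniquely to one of $\mathcal{R}(X)$ by re-adjoining $\emptyset$ and $X$. This bijection shifts every length by exactly $2$, so all maximal chains of $\mathcal{R}(X)$ have equal length precisely when all maximal chains of $\overline{\mathcal{R}(X)}$ do; that is, $\mathcal{R}(X)$ is pure if and only if $\overline{\mathcal{R}(X)}$ is pure. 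Combined with the previous step this yields the corollary. I do not anticipate a serious obstacle, since the statement is essentially a specialization of Lemma~\ref{lem:filter}; the only step requiring genuine care is the uniqueness of the atom through each point, as it is what converts transitivity of $\mathsf{Inn}(X)$ on $X$ into transitivity on minimal subracks.
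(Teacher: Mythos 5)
Your proof is correct, and while it ends at the same place as the paper---an application of Lemma~\ref{lem:filter} to $\overline{\mathcal{R}(X)}$---it verifies the transitivity hypothesis by a genuinely different and more self-contained route. The paper's proof spends most of its effort reducing to the quandle case: for $a\in X$ it forms the classes $[a]=\{\phi_a^n(a)\mid n\in\mathbb{Z}\}$, invokes \cite{KS19} to say that $Q=\{[a]\mid a\in X\}$ with $[a]*[b]:=[a\triangleright b]$ is a quandle whose subrack lattice is isomorphic to $\mathcal{R}(X)$ and that $Q$ inherits connectedness, and then notes that in a connected quandle the minimal proper subracks are exactly the singletons by axiom (A3), so that transitivity of $\mathsf{Inn}(Q)$ on them is immediate. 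You bypass this reduction entirely: you observe that two atoms of $\mathcal{R}(X)$ sharing a point must coincide (their intersection is a nonempty subrack below each), and then connectivity of $X$ transports any atom onto any other. This avoids both the quandle construction and the citation of \cite{KS19}, at the cost of leaving the atoms unidentified, whereas the paper's detour exhibits them concretely as the classes $[a]$; the two arguments are in fact secretly about the same objects, since those classes are precisely the atoms. One point you should make explicit to close your disjointness step: the intersection of two subracks is again a subrack, which holds here because in a \emph{finite} rack any subset closed under $\triangleright$ is a subrack (each left translation restricts to an injective, hence bijective, self-map of the subset). Your handling of the passage between purity of $\mathcal{R}(X)$ and of $\overline{\mathcal{R}(X)}$, via the endpoint-adjoining bijection on maximal chains, is a correct elaboration of what the paper merely asserts.
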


\begin{proof}
  Since $\mathcal{R}(X)$ is pure if and only if $\overline{\mathcal{R}(X)}$ is pure, it is enough to show that $\overline{\mathcal{R}(X)}$ is pure if and only if $\overline{\mathcal{R}(X)}_{\geq S}$ is pure.
  
  For an element $a\in X$, let $[a]$ be the set $\{\phi_a^n(a)\mid n\in \mathbb{Z}\}$. Those sets form a partition of $X$. Let $Q:=\{[a]\mid a\in X\}$ and for $[a],[b]\in Q$ define $[a]*[b]:=[a\triangleright b]$. As is shown in \cite{KS19}, the binary operation $*$ is well-defined, the set $Q$ together with $*$ is a quandle, and the subrack lattice of $X$ is isomorphic to the subrack lattice of $Q$. Further, one can easily observe that $Q$ is connected if $X$ is connected. Therefore, it is enough to prove the statement for quandles.

  Let $X$ be a finite connected quandle. By the third rack axiom (A3), the minimal elements of $\overline{\mathcal{R}(X)}$ are exactly the $1$-element subsets of $X$. Since $X$ is connected, the inner automorphism group $\mathsf{Inn}(X)$ acts on the set of singletons in $\mathcal{R}(X)$ transitively. Hence, Lemma~\ref{lem:filter} applies.
\end{proof}

There are many examples of connected racks $X$ such that the poset $\mathsf{Inf}(\mathcal{R}(X))$ is pure even when $\mathcal{R}(X)$ is not pure. The following example shows that a reverse situation is also possible.

\begin{example}
  Let $C$ be the conjugacy class of $(1,2,3)(4,5,6)$ in the alternating group $A_6$ of degree $6$. Let $$ a = (1,2,3)(4,5,6),\; b = (1,2,3)(4,6,5),\; c = (1,3,2)(4,5,6),\; d = (1,3,2)(4,6,5).  $$
  Clearly, $\langle a,b \rangle$ is a Sylow $3$-subgroup of $A_6$. Since any two distinct Sylow $3$-subgroups generate the whole of $A_6$, the chain $$ \{a\} < \{a,b\} < \{a,b,c\} < \{a,b,c,d\} $$ is maximal in $\overline{\mathcal{R}(C)}$.

  Let $e = (1,2,6)(3,4,5)$ and $f = (1,2,4)(3,6,5)$. Then $H = \langle a,e \rangle$ is isomorphic to $A_4$ and $K = \langle a,f\rangle$ is isomorphic to $A_5$. One way to observe this is to consider the action of $S_6$ on the six Sylow $5$-subgroups and construct an (injective) homomorphism from $S_5$ to $S_6$ and look for the inverse images of $a,e$, and $f$ under this homomorphism. (Such an homomorphism can be used to construct the outer automorphisms of $S_6$.) The set $H\cap C$ splits into two conjugacy classes, say $C_1$ and $C_2$, in $H$ and $$ \{a\} < C_1 < C_2 < K\cap C $$ is a maximal chain in $\overline{\mathcal{R}(C)}$. Further computations show that $\mathcal{R}(C)$ is a pure lattice. 

 Next, we consider $\mathsf{Inf}(\mathcal{R}(C))$. Any maximal subrack of $C$ containing $a$ also contains $a^2=d$ and $$ \{a,d\} < \{a,b,c,d\} $$ is a maximal chain in $\mathsf{Inf}(\mathcal{R}(C))$. Another maximal chain is this one: $$ \{a,d\} < H\cap C < K\cap C. $$ Since the length of those two maximal chains is different, $\mathsf{Inf}(\mathcal{R}(C))$ is not pure.

\end{example}

\begin{question}
  Is there any finite rack $X$ such that the poset $\mathsf{Inf}(\mathcal{R}(X))$ is not pure and the homotopy type of $\Delta(\mathsf{Inf}(\mathcal{R}(X)))$ is a wedge of top dimensional spheres?
\end{question}

Let $G$ be a group acting on the underlying set of a poset $\mathcal{P}$. We say $\mathcal{P}$ is a \emph{$G$-poset} if for all $g\in G$ and $S,T\in \mathcal{P}$ the relation $S \leq T$ implies $g\cdot S \leq g\cdot T$. In other words each $g\in G$ induces an automorphism of $\mathcal{P}$. Let $\mathcal{P}$ be a $G$-poset. We define the poset $\mathsf{Orb}_G(\mathcal{P})$ in the following way: The elements of $\mathsf{Orb}_G(\mathcal{P})$ are the set of orbits of elements of $\mathcal{P}$ under the action of $G$ and $\mathsf{Orb}_G(S) \leq \mathsf{Orb}_G(T)$ whenever there exist $U\in \mathsf{Orb}_G(S)$ and $V\in \mathsf{Orb}_G(T)$ such that $U < V$. It can be easily observed that the map $\omega\colon \mathcal{P}\to \mathsf{Orb}_G(\mathcal{P})$ taking elements of the poset $\mathcal{P}$ to their orbits is a poset map. 

The \emph{integer partition lattice} $I_n$ is defined in the following way: A typical element of $I_n$ is a multiset of integers whose total sum is $n$ and $\{i_1,\dots,i_r\} \leq \{j_1,\dots,j_s\}$ if there is a partition of $\{i_1,\dots,i_r\}$ into $s$ blocks $B_1|\dots|B_s$ and each $j_t$, $1\leq t\leq s$, is the sum of the elements in the block $B_t$. Let $\mathcal{P}$ be a subposet of the partition lattice $\Pi_n$. The map $\iota\colon \mathcal{P}\to I_n$ taking a partition of $[n]$ to the integer partition that consists of the sizes of the blocks in the partition is clearly a poset map.

\begin{thm}\label{thm:ipi}
  Let $X$ be a finite connected rack with $n$ elements, $G:=\mathsf{Inn}(X)$ and $\mathcal{I} := \pi(\mathsf{Inf}(\mathcal{R}(X)))$. For an orbit structure $P\in \mathcal{I}$, let $[P]$ be the set of all elements in $\mathcal{I}$ having the same multiset of block sizes with $P$. Then
  \begin{enumerate}[(i)]
  \item the maps $\omega\circ\pi\colon \mathsf{Inf}(\mathcal{R}(X))\to \mathsf{Orb}_G(\mathcal{I})$ and $\iota\circ\pi\colon \mathsf{Inf}(\mathcal{R}(X))\to I_n$ are poset maps, and
  \item the poset $\mathsf{Orb}_G(\mathcal{I})$ is isomorphic to the poset $\iota(\mathcal{I})$ if and only if $G$ acts on each $[P]$, $P\in \mathcal{I}$, transitively. 
  \end{enumerate}
\end{thm}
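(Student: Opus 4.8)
The plan is to route both parts through a single comparison map $\psi$ that sends a $G$-orbit of orbit structures to the common multiset of block sizes of its members, and to extract everything from the elementary properties of $\psi$.

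First I would check that $\mathcal{I}$ is genuinely a $G$-poset, so that $\mathsf{Orb}_G(\mathcal{I})$ and the map $\omega$ are defined on it. Since every element of $\mathsf{Inf}(\mathcal{R}(X))$ is a meet (intersection) of maximal subracks and $G=\mathsf{Inn}(X)$ acts by automorphisms of $\mathcal{R}(X)$, it permutes the maximal subracks and hence fixes $\mathsf{Inf}(\mathcal{R}(X))$ setwise. Because $\Phi$ is $\mathsf{Inn}(X)$-equivariant, for $g\in G$ we have $\langle\Phi(g\cdot S)\rangle = g\langle\Phi(S)\rangle g^{-1}$, so the $\langle\Phi(g\cdot S)\rangle$-orbits on $X$ are exactly the $g$-translates of the $\langle\Phi(S)\rangle$-orbits; that is, $\pi$ is $G$-equivariant and therefore $\mathcal{I}=\pi(\mathsf{Inf}(\mathcal{R}(X)))$ is $G$-invariant. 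Part (i) is then immediate: $\pi$ is a poset map by Theorem~\ref{thm:pi}, $\omega$ is a poset map by the observation preceding the statement, and $\iota$ is a poset map; a composition of poset maps is a poset map, so both $\omega\circ\pi$ and $\iota\circ\pi$ are poset maps.

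For part (ii) I would define $\psi\colon\mathsf{Orb}_G(\mathcal{I})\to\iota(\mathcal{I})$ by $\psi(\mathsf{Orb}_G(P)):=\iota(P)$. This is well defined because $G$ permutes the ground set $X$, so $G$-conjugate orbit structures have the same block-size multiset; it is surjective by construction; and it is order preserving because every defining relation in $\mathsf{Orb}_G(\mathcal{I})$ lifts to an honest relation in $\mathcal{I}$, which $\iota$ respects (equivalently, $\iota=\psi\circ\omega$ on $\mathcal{I}$). The two partitions of $\mathcal{I}$ to keep in view are the partition into $G$-orbits and the partition into the classes $[P]$, which are exactly the fibres of $\iota$; the former always refines the latter. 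Consequently $|\mathsf{Orb}_G(\mathcal{I})|\ge|\iota(\mathcal{I})|$, with equality precisely when each $[P]$ is a single $G$-orbit, i.e.\ when $G$ acts transitively on every $[P]$. The \emph{only if} direction is then a counting argument: isomorphic finite posets have equal cardinality, so if $\mathsf{Orb}_G(\mathcal{I})\cong\iota(\mathcal{I})$ the surjection $\psi$ must be a bijection, and by the previous sentence this forces transitivity of $G$ on each $[P]$.

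For the \emph{if} direction, transitivity makes $\psi$ a bijection, and the remaining task is to upgrade an order-preserving bijection to an isomorphism by showing that $\psi$ also \emph{reflects} order, i.e.\ that $\iota(P)\le\iota(Q)$ in $I_n$ forces $\mathsf{Orb}_G(P)\le\mathsf{Orb}_G(Q)$. This is the step I expect to be the main obstacle. A comparability $\iota(P)\le\iota(Q)$ in $I_n$ is always witnessed by an honest refinement of set partitions in $\Pi_n$ (where $n=|X|$), so the real content is to realize such a witness \emph{inside} $\mathcal{I}$, and this is exactly where transitivity of $G$ on the classes $[P]$ and $[Q]$ should be used to transport a witnessing pair into the correct $G$-orbits. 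Concretely, I would fix the representative $Q$ itself and seek $P'\in[P]$ with $P'\le Q$ in $\mathcal{I}$, equivalently a subrack $S$ contained in the meet of the maximal subracks above $Q$ whose orbit structure has block-size type $\iota(P)$; transitivity then places $P'$ in $\mathsf{Orb}_G(P)$ and gives $\mathsf{Orb}_G(P)\le\mathsf{Orb}_G(Q)$. The genuinely delicate point, and the crux the argument must settle, is guaranteeing the existence of such an $S$ in $\mathsf{Inf}(\mathcal{R}(X))$: not every set partition of the prescribed block-size type need arise as the orbit structure of a subrack, so one must show that within the subracks below $Q$ some member of $\mathsf{Inf}(\mathcal{R}(X))$ does carry the type $\iota(P)$, and it is transitivity that is meant to supply exactly this realizability.
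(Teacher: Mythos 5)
Part (i) and the \emph{only if} half of (ii) in your proposal are sound: the composition argument for (i) is exactly the paper's, your preliminary check that $\mathcal{I}$ is a $G$-poset is a worthwhile addition the paper omits, and your counting argument ($G$-orbits refine the fibres of $\iota|_{\mathcal{I}}$, so an isomorphism forces each fibre to be a single orbit) is correct and more explicit than the paper's two-sentence justification. The genuine gap is the \emph{if} direction, which you explicitly leave open, and it cannot be closed the way you hope: with $\iota(\mathcal{I})$ read as a subposet of $I_n$, the ``realizability'' you expect transitivity to supply is false. Take $X=T$ to be the conjugacy class of transpositions of $S_5$, so $n=|X|=10$, $G\cong S_5$, $\mathcal{R}(X)\cong\Pi_5$ by \cite[Example~2.3]{HSW19}, and $\mathsf{Inf}(\mathcal{R}(X))=\overline{\mathcal{R}(X)}$ (every partition of $[5]$ with at least two blocks is a meet of two-block partitions). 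Identifying a transposition with an edge on the vertex set $[5]$, the orbit structure of the subrack attached to $\lambda\in\overline{\Pi_5}$ with blocks $B_i$ has as its blocks the within-block edge sets, of sizes $\binom{|B_i|}{2}$, and the cross-block edge sets, of sizes $|B_i|\,|B_j|$. The five shapes $(4,1),(3,2),(3,1,1),(2,2,1),(2,1,1,1)$ yield block-size multisets $\{6,4\}$, $\{1,3,6\}$, $\{1,3,3,3\}$, $\{1,1,2,2,4\}$, $\{1,1,1,1,2,2,2\}$, which are pairwise distinct (they even have distinct cardinalities), so each class $[P]$ is a single $G$-orbit and the transitivity hypothesis holds. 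Now $\{1,3,6\}\leq\{4,6\}$ in $I_{10}$ (group $1+3=4$), but no orbit structure of shape-$(3,2)$ type refines one of shape-$(4,1)$ type: by size count the $6$-element cross-block of the former (the edges of a $K_{3,2}$, supported on five vertices) would have to equal the $6$-element within-block of the latter (the edges of a $K_4$, supported on four vertices), which is impossible. Hence these two classes are incomparable in $\mathsf{Orb}_G(\mathcal{I})$ although their $\iota$-values are comparable in $I_{10}$; concretely, $\mathsf{Orb}_G(\mathcal{I})$ has two maximal elements while the subposet $\iota(\mathcal{I})\subseteq I_{10}$ has only one, so the two posets are not isomorphic under any bijection.

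So your instinct about where the difficulty sits was exactly right, but the way out is not to prove the realizability statement --- it fails --- and the theorem is only tenable if $\iota(\mathcal{I})$ carries not the order induced from $I_n$ but the image (quotient) order, in which $\iota(P)\leq\iota(Q)$ means that some $P'\in[P]$ and $Q'\in[Q]$ satisfy $P'\leq Q'$ in $\mathcal{I}$. That is what the paper's own proof implicitly does: it never tests a comparability inside $I_n$, it merely uses $\iota(P)$ as a label for the class $[P]=\mathsf{Orb}_G(P)$ and transports the order of $\mathsf{Orb}_G(\mathcal{I})$ across this bijection. Under that reading your map $\psi$ is an isomorphism by construction and your ``crux'' step evaporates; under the subposet-of-$I_n$ reading you actually adopt, no argument can complete the proof. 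Either way, the proposal as written does not establish the \emph{if} direction.
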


\begin{proof}
  \emph{Statement (i):} Each of the maps in the compositions $\omega\circ\pi$ and $\iota\circ\pi$ are poset maps as previously observed.

  \emph{Statement (ii):} The class $[P]$ of partitions having the same set of block sizes with $P$ is exactly the orbit of $P$ under the action of $G:=\mathsf{Inn}(X)$ when $G$ acts transitively on $[P]$. And each partition class $[P]$ can be uniquely identified with $\iota(P)$.
\end{proof}

\section{$p$-Power racks}\label{sec:ppr}

Let $G$ be a finite group and let $X\subseteq G$ be a conjugation rack. The set $Z:=X\cap Z(G)$, i.e., the set $Z$ of all central elements of $G$ that are lying in $X$, is clearly a subrack of $X$. With this notation we have the following lemma.

\begin{lem}[see {\cite[Lemma~2.9]{HSW19}}]\label{lem:chi}
  Suppose both $Z:=X\cap Z(G)$ and $X\setminus Z$ are non-empty sets. Then the subrack lattice $\mathcal{R}(X)$ is isomorphic to the direct product $\mathcal{R}(X\setminus Z) \times \mathcal{R}(Z)$. Consequently, the equality $$ \widetilde{\chi}(\Delta(\overline{\mathcal{R}(X)})) = (-1)^{|Z|}\cdot \widetilde{\chi}(\Delta(\overline{\mathcal{R}(X\setminus Z)})) $$ holds, where $\widetilde{\chi}(\cdot)$ denotes the reduced Euler characteristic. 
\end{lem}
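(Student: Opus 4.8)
The plan is to split the statement into its two assertions: first the lattice isomorphism $\mathcal{R}(X)\cong \mathcal{R}(X\setminus Z)\times \mathcal{R}(Z)$, and then the Euler characteristic formula, which I would deduce from the isomorphism by a standard product computation. The key structural fact to exploit is that $Z=X\cap Z(G)$ consists of central elements, so for any $a\in G$ and $z\in Z$ we have $a\triangleright z = aza^{-1}=z$, and likewise $z\triangleright b = zbz^{-1}=b$ for every $b$. Thus every element of $Z$ is fixed by every inner automorphism, and conjugation by a central element is the identity map. This means $Z$ interacts trivially with the rest of $X$ under the operation $\triangleright$.

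First I would establish the isomorphism of lattices. The natural candidate map sends a subrack $S\subseteq X$ to the pair $(S\setminus Z,\; S\cap Z)$, with inverse $(A,B)\mapsto A\cup B$. The crux is to verify that a subset $S\subseteq X$ is a subrack if and only if $S\setminus Z$ is a subrack of $X\setminus Z$ and $S\cap Z$ is a subrack of $Z$; this is where centrality does the work. Because $Z$ is central, adjoining or removing central elements never affects closure under $\triangleright$: for $a,b\in X\setminus Z$ we have $a\triangleright b\in X\setminus Z$ (since conjugation preserves the property of being central, as $Z(G)$ is normal, indeed characteristic), and any element of $Z$ is absorbed harmlessly because it is fixed and acts trivially. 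Hence closure of $S$ decomposes exactly into closure of its two pieces, and the map is a bijection. It is clearly order-preserving in both directions because $A\cup B\subseteq A'\cup B'$ iff $A\subseteq A'$ and $B\subseteq B'$ when the unions are disjoint, so it is a lattice isomorphism.

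Next I would derive the Euler characteristic identity. Since $\mathcal{R}(X)\cong \mathcal{R}(X\setminus Z)\times\mathcal{R}(Z)$ as bounded lattices, the proper part $\overline{\mathcal{R}(X)}$ relates to the proper parts of the two factors via the standard formula for the reduced Euler characteristic of a product of bounded posets. Concretely, for bounded lattices $\mathcal{L}_1,\mathcal{L}_2$ one has
\begin{equation*}
\widetilde{\chi}(\Delta(\overline{\mathcal{L}_1\times\mathcal{L}_2})) = \widetilde{\chi}(\Delta(\overline{\mathcal{L}_1}))\cdot\widetilde{\chi}(\Delta(\overline{\mathcal{L}_2})),
\end{equation*}
which is most cleanly seen through the identity $\widetilde{\chi}(\Delta(\overline{\mathcal{L}}))=\mu_{\widehat{\mathcal{L}}}(\hat 0,\hat 1)$ together with the multiplicativity of the Möbius function on direct products of posets. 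Here $\mathcal{L}_1=\mathcal{R}(X\setminus Z)$ and $\mathcal{L}_2=\mathcal{R}(Z)$. Since $Z$ consists of mutually commuting central elements, every subset of $Z$ is a subrack, so $\mathcal{R}(Z)$ is the Boolean lattice $B_{|Z|}=2^{[|Z|]}$; its reduced Euler characteristic is $\widetilde{\chi}(\Delta(\overline{B_{|Z|}}))=(-1)^{|Z|}$, as $\Delta(\overline{B_m})$ triangulates $S^{m-2}$. Substituting this value into the product formula yields exactly the claimed equality.

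**The main obstacle** I expect is not the topology but the careful bookkeeping in the isomorphism step: one must check that conjugation genuinely respects the decomposition, i.e. that $a\triangleright b$ never crosses between $Z$ and $X\setminus Z$, and that axioms (A1) and (A2) restrict correctly to each piece. This is routine given centrality, but it is the one place where a hidden interaction between the two factors could in principle occur, so I would state it as a short verified claim rather than assume it. The Euler characteristic half is then formal, relying only on Möbius multiplicativity and the Boolean computation, both of which are standard.
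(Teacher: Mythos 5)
Your proposal is correct and follows essentially the same route as the paper: the paper uses the same decomposition map $S\mapsto (S\cap Y, S\cap Z)$ with $Y:=X\setminus Z$ (identical to yours, since $S\setminus Z=S\cap Y$), notes that $\mathcal{R}(Z)\cong B_{|Z|}$, and deduces the Euler characteristic identity from exactly the two facts you invoke, namely multiplicativity of the M\"obius function on direct products and Philip Hall's theorem equating $\mu(\hat{0},\hat{1})$ with the reduced Euler characteristic of the proper part (Stanley, Propositions~3.8.2 and~3.8.6). The only cosmetic point is that, since $\mathcal{R}(X\setminus Z)$ and $\mathcal{R}(Z)$ are already bounded, the M\"obius function should be taken in the lattice itself, $\mu_{\mathcal{L}}(\hat{0},\hat{1})$, rather than in $\widehat{\mathcal{L}}$ in the paper's sense of adjoining new bounds.
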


\begin{proof}
  The map $f\colon \mathcal{R}(X)\to \mathcal{R}(Y) \times \mathcal{R}(Z)$ taking $S\in\mathcal{R}(X)$ to $(S\cap Y, S\cap Z)$, where $Y:=X\setminus Z$, is a poset isomorphism. Since $\mathcal{R}(Z)$ is isomorphic to the Boolean algebra $B_{|Z|}$, we have $\widetilde{\chi}(\Delta(\overline{\mathcal{R}(Z)})) = (-1)^{|Z|}$. Then, the latter statement follows from \cite[Propositions~3.8.2 and 3.8.6]{Sta12}.
\end{proof}

For a finite group $G$ and a prime number $p$, we define the $p$-power rack $G_p$ as the conjugation rack of all elements of $G$ whose order is a power of $p$. Clearly, the underlying set of $G_p$ is the union of Sylow $p$-subgroups of $G$.

\begin{example}\label{ex:g3a4}
  Let $G:=A_4$ be the alternating group on $4$ letters and consider $G_3$, the rack of order three elements of $G$ together with the identity element. Clearly $|G_3|=9$ and there are four Sylow $3$-subgroups $P_i,\,1\leq i\leq 4,$ isomorphic to the cyclic group with three elements. Let
\begin{align*}
  &P_1=\{(123),(132),1\}, \\
  &P_2=\{(134),(143),1\}, \\
  &P_3=\{(142),(124),1\}, \\
  &P_4=\{(243),(234),1\}.  
\end{align*}
Then the following two conjugacy classes together with $1$ form $G_3$:
\begin{align*}
  &C=\{(123),(134),(142),(243)\} \\
  &D=\{(132),(143),(124),(234)\} 
\end{align*}
Notice that any two non-identity elements from distinct Sylow $3$-subgroups generate $G$, as $\langle P_i,P_j\rangle=G$ if $i\neq j$. In Figure~\ref{fig:g3a4} we depict the Hasse diagram of $\overline{\mathcal{R}(G_3)}$. 
\end{example}

\begin{sidewaysfigure}
\centering
\includegraphics[width=\textwidth]{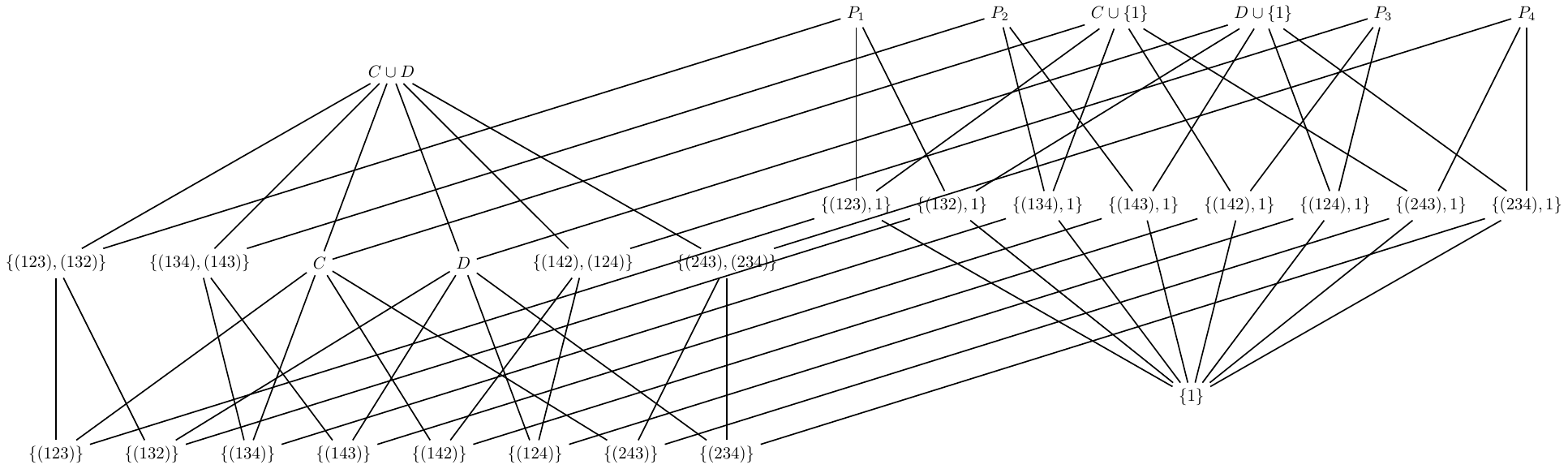}
\caption{The poset of proper non-empty subracks of $3$-power elements of $A_4$}
\label{fig:g3a4}
\end{sidewaysfigure}

\begin{rem}
  The subrack lattice of the conjugation rack $G_3$ in the previous example is observed to be pure. Let $H$ be the alternating group $A_5$. As is observed in \cite[Example~2.6]{HSW19} the subrack lattice of $H_5$ is not pure. From Figure~\ref{fig:g3a4} it is obvious that $\mathcal{R}(G_3)$ is coatomic and, hence, $\overline{\mathcal{R}(G_3)}$ and $\mathsf{Inf}(\mathcal{R}(G_3))$ are the same. On the other hand, $\overline{\mathcal{R}(H_5)}$ and $\mathsf{Inf}(\mathcal{R}(H_5))$ are not the same and, in particular, $\mathsf{Inf}(\mathcal{R}(H_5))$ is pure. Let $K$ be the alternating group $A_6$ and consider $\mathsf{Inf}(\mathcal{R}(K_2))$. Let $C$ be the conjugacy class of $(1,2,3,4)(5,6)$ in $K$ together with the identity element $1$ and $D$ be the conjugacy class of $(1,2)(3,4)$ together with $1$. Let $P = \langle (1,2,3,4)(5,6), (1,2)(3,4)\rangle$. Clearly, $P$ is a Sylow $2$-subgroup of $K$ and it is isomorphic to the dihedral group $D_8$. Since Sylow $2$-subgroups are maximal subracks of $K_2$ and since there is no Sylow $2$-subgroup of $K$ containing $\langle (1,2,3,4)(5,6)\rangle$ other than $P$, we see that
  $$ \{1\} < P\cap C < P $$
  is a maximal chain in $\mathsf{Inf}(\mathcal{R}(K_2))$. Let $ Q = \langle (1,3,2,4)(5,6), (1,3)(2,4)\rangle$ and $R = \langle (1,5,3,6)(2,4), (1,5)(3,5)\rangle$. Then
  $$ \{1\} < P\cap Q\cap R < P\cap Q < P\cap C < P $$
is another maximal chain in $\mathsf{Inf}(\mathcal{R}(K_2))$; hence, $\mathsf{Inf}(\mathcal{R}(K_2))$ is not pure. 
\end{rem}

In Lemma~\ref{lem:chi} we derived an arithmetic condition relating the Euler characteristic of $\Delta(\overline{\mathcal{R}(X)})$ with the characteristics of other order complexes. The following result is substantially stronger as the homotopy type of $\Delta(\overline{\mathcal{R}(X)})$ is expressed in terms of the homotopy types of other complexes.

\begin{prop}\label{prop:dir}
  Let $G$ be a finite group and let $X\subseteq G$ be a conjugation rack. Suppose both $Z:=X\cap Z(G)$ and $X\setminus Z$ are non-empty sets. Then
  $$ \Delta(\overline{\mathcal{R}(X)}) \simeq \Delta(\mathsf{Inf}(\mathcal{R}(X\setminus Z))) * S^{|Z|-1}.  $$
\end{prop}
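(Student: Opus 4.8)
The plan is to feed the product decomposition of Lemma~\ref{lem:chi} into an analysis of the crosscut complex on the coatoms of $\mathcal{R}(X)$. Write $Y:=X\setminus Z$. By Lemma~\ref{lem:chi} we have $\mathcal{R}(X)\cong \mathcal{R}(Y)\times \mathcal{R}(Z)$, and since every subset of the central set $Z$ is a subrack, $\mathcal{R}(Z)\cong B_{|Z|}$. The coatoms (maximal subracks) of $\mathcal{R}(X)$ then split into two families: the \emph{type~I} coatoms $M\cup Z$, where $M$ ranges over the maximal subracks of $Y$, and the $|Z|$ \emph{type~II} coatoms $Y\cup(Z\setminus\{z\})$, $z\in Z$. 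Since the maximal subracks of $X$ form a crosscut in $\overline{\mathcal{R}(X)}$, Theorem~\ref{thm:crosscut} reduces the problem to determining the homotopy type of the crosscut complex $K$ on these coatoms, whose faces are exactly the collections of coatoms with nonempty intersection.

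First I would record the intersection pattern. A direct computation shows that a collection consisting of some type~I coatoms $M_i$ together with the type~II coatoms indexed by a set $B\subseteq Z$ intersects in $\bigl(\bigcap_i M_i\bigr)\cup(Z\setminus B)$, which is nonempty precisely when $\bigcap_i M_i\neq\emptyset$ or $B\neq Z$. Let $K_Y$ denote the crosscut complex of $\overline{\mathcal{R}(Y)}$ on the maximal subracks of $Y$, so that $K_Y\simeq\Delta(\overline{\mathcal{R}(Y)})$ by Theorem~\ref{thm:crosscut}; let $T$ be the set of the $|Z|$ type~II vertices, $\Delta_T$ the full simplex on $T$ and $\partial\Delta_T\cong S^{|Z|-2}$ its boundary; and let $\Delta_V$ be the full simplex on the set $V$ of all maximal subracks of $Y$. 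The intersection condition just displayed then exhibits $K$ as the union $K=(K_Y*\Delta_T)\cup(\Delta_V*\partial\Delta_T)$ of two subcomplexes, each contractible because it is the join of a complex with a full simplex (hence a cone), and with intersection $K_Y*\partial\Delta_T$.

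From here the topology is forced. The union of two contractible subcomplexes covering $K$ and meeting in $\mathcal{I}:=K_Y*\partial\Delta_T$ is the homotopy pushout of two contractible spaces along $\mathcal{I}$, hence homotopy equivalent to the suspension $\Sigma\mathcal{I}$; thus $K\simeq\Sigma(K_Y*\partial\Delta_T)$. Using $\partial\Delta_T\cong S^{|Z|-2}$, the identity $\Sigma A\simeq S^0*A$, and the join arithmetic $S^a*S^b\simeq S^{a+b+1}$, I would simplify $\Sigma(K_Y*S^{|Z|-2})\simeq K_Y*S^{|Z|-1}$. Finally Theorem~\ref{thm:crosscut} and Proposition~\ref{prop:inf} give $K_Y\simeq\Delta(\overline{\mathcal{R}(Y)})\simeq\Delta(\mathsf{Inf}(\mathcal{R}(Y)))$, and since the join is a homotopy functor I conclude $\Delta(\overline{\mathcal{R}(X)})\simeq K\simeq\Delta(\mathsf{Inf}(\mathcal{R}(X\setminus Z)))*S^{|Z|-1}$.

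The main obstacle I expect is the bookkeeping in the crosscut analysis: identifying the two coatom families correctly under the product isomorphism and, above all, pinning down the intersection condition ``$\bigcap_i M_i\neq\emptyset$ or $B\neq Z$'' exactly, since it is precisely this dichotomy that cleaves $K$ into the two cones whose intersection is $K_Y*\partial\Delta_T$. A secondary point is the handling of the degenerate conventions (empty joins, $\Delta(\emptyset)=S^{-1}$, and the boundary cases $|Y|=1$ or $|Z|=1$), where the sphere arithmetic must be read with the usual reduced conventions; the smallest case $|Y|=1$ can instead be verified directly, since then $\mathcal{R}(X)\cong B_{|Z|+1}$ and both sides equal $S^{|Z|-1}$. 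As an alternative to the suspension step one could prove the general formula $\Delta(\overline{P\times B_n})\simeq\Delta(\overline{P})*S^{n-1}$ by induction on $n$ and specialize to $P=\mathcal{R}(Y)$, but the base case $n=1$ requires the same homotopy-pushout argument, so the crosscut route is the most economical.
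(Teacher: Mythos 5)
Your proof is correct, and it reaches the result by a genuinely different route than the paper. Both arguments share the same endpoints: the product decomposition $\mathcal{R}(X)\cong\mathcal{R}(X\setminus Z)\times\mathcal{R}(Z)$ with $\mathcal{R}(Z)\cong B_{|Z|}$ from Lemma~\ref{lem:chi}, and the final reduction $\Delta(\overline{\mathcal{R}(X\setminus Z)})\simeq\Delta(\mathsf{Inf}(\mathcal{R}(X\setminus Z)))$ from Proposition~\ref{prop:inf}. The difference is the middle step: the paper disposes of the product in one stroke by citing the Bj\"orner--Walker homotopy complementation formula \cite[Proposition~4.3]{BW83}, which gives $\Delta(\overline{\mathcal{R}(X)})\simeq \susp\bigl(\Delta(\overline{\mathcal{R}(X\setminus Z)})*\Delta(\overline{\mathcal{R}(Z)})\bigr)$ directly, and then performs the same sphere/join arithmetic you do. You instead reprove by hand exactly the special case of that formula in which one factor is Boolean: you pass to the crosscut complex on the coatoms (Theorem~\ref{thm:crosscut}), split the coatoms into the two families induced by the product isomorphism, and show that the intersection dichotomy splits $K$ as $(K_Y*\Delta_T)\cup(\Delta_V*\partial\Delta_T)$, two cones meeting in $K_Y*\partial\Delta_T$, so that the standard gluing lemma (a union of two contractible subcomplexes is homotopy equivalent to the suspension of their intersection; this can be found in \cite{Bjo95}) produces the suspension. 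Your coatom identification, intersection condition, and cone decomposition all check out, and you are right that the degenerate conventions ($|Y|=1$, $|Z|=1$, the empty complex read as $S^{-1}$) are the only delicate points; your direct verification for $|Y|=1$ correctly covers the case where the crosscut machinery for $Y$ degenerates. What the paper's route buys is brevity, since one citation replaces your entire crosscut analysis; what yours buys is self-containedness (every input is a result already proved or quoted in the paper) and an explicit geometric picture of where the sphere factor comes from, namely the boundary of the simplex on the central elements.
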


\begin{proof}
  We already observed in Lemma~\ref{lem:chi} that the lattice $\mathcal{R}(X)$ is isomorphic to the direct product $\mathcal{R}(X\setminus Z) \times \mathcal{R}(Z)$. By \cite[Proposition~4.3]{BW83}, $\Delta(\overline{\mathcal{R}(X)})$ is homotopy equivalent to the suspension of the join $\Delta(\overline{\mathcal{R}(X\setminus Z)}) * \Delta(\overline{\mathcal{R}(Z)})$. By Proposition~\ref{prop:inf} the complexes $\Delta(\overline{\mathcal{R}(X)})$ and $\Delta(\mathsf{Inf}(\mathcal{R}(X)))$ share the same homotopy type. Since $\Delta(\mathcal{R}(Z))$ is a Boolean algebra $B_{|Z|}$, the result follows.
\end{proof}

We shall introduce the following two subposets of the subrack lattice of a conjugation rack $X\subseteq G$.
\begin{itemize}
\item $\mathcal{S}(X) := \{S\in\mathsf{Inf}(\mathcal{R}(X)) \mid S \text{ is a union of some } \mathsf{Inn}(X)\text{-orbits}\}$
\item $\mathcal{P}(X) := \{P\in\mathsf{Inf}(\mathcal{R}(X)) \mid P\cap S\neq \emptyset  \text{ for all } S\in \mathcal{S}(X)\}$
\end{itemize}
Let $\mathcal{S}$ be the set of all coatoms of $\mathcal{R}(X)$ which are union of all but one $\mathsf{Inn}(X)$-orbits. It is easy to observe that $\mathcal{S}(X)$ and $\mathsf{Inf}(\mathcal{R}(X),\mathcal{S})$ are the same posets. The subracks in $\mathcal{S}(X)$ are called \emph{spherical}. Notice that $\mathcal{S}(X)$ is isomorphic to the proper part of a Boolean algebra and, hence, $\Delta(\mathcal{S}(X))$ is homeomorphic to a sphere. The subracks in $\mathcal{P}(X)$ are said to be \emph{parabolic}. Two particular cases worth mentioning:
\begin{enumerate}[(i)]
\item If $X=G$ is a group rack, then there are no parabolic subracks, i.e., $\mathcal{P}(X)$ is the empty poset, as any subset of a group that contains a representative element from each conjugacy class necessarily generates the whole group.
\item If $X=C$ is a connected conjugacy class rack, then any element of $\mathsf{Inf}(\mathcal{R}(X))$ would be vacuously parabolic as $\mathcal{S}(X)$ would be the empty poset in that case.
\end{enumerate}

\begin{rem}
  The underlying sets of $\mathcal{S}(X)$ and $\mathcal{P}(X)$ are disjoint for any conjugation rack $X$. Moreover, parabolic subracks cannot contain a conjugacy class $C$ if $\mathsf{Inn}(X)$ is a non-abelian simple group and $C$ is different from $\{1\}$. To see this observe that an $\mathsf{Inn}(X)$-orbit of an element of $X$ is a conjugacy class of $\mathsf{Inn}(X)$; hence, generates a normal subgroup.
\end{rem}

Given two disjoint posets $\mathcal{P}$ and $\mathcal{Q}$, their \emph{ordinal sum} $\mathcal{P}\oplus \mathcal{Q}$ is the poset whose underlying set is $\mathcal{P}\sqcup\mathcal{Q}$ and $x\leq y$ in $\mathcal{P}\oplus \mathcal{Q}$ if and only if one of the following holds:
\begin{enumerate}[(i)]
\item $x\leq y$ in $\mathcal{P}$
\item $x\leq y$ in $\mathcal{Q}$
\item $x\in\mathcal{P}$ and $y\in\mathcal{Q}$
\end{enumerate}

Observe that simplices of $\Delta(\mathcal{P}\oplus \mathcal{Q})$ are exactly the simplices of $\Delta(\mathcal{P}) * \Delta(\mathcal{Q})$. As a consequence, we have the following Lemma.

\begin{lem}\label{lem:ord}
  For any two disjoint posets $\mathcal{P}$ and $\mathcal{Q}$, we have $$ \Delta(\mathcal{P}\oplus \mathcal{Q}) = \Delta(\mathcal{P}) * \Delta(\mathcal{Q}). $$
\end{lem}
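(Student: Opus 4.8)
The plan is to prove the identity
\[
\Delta(\mathcal{P}\oplus \mathcal{Q}) = \Delta(\mathcal{P}) * \Delta(\mathcal{Q})
\]
by showing that the two simplicial complexes have exactly the same set of faces (simplices), since both are built on the disjoint vertex set $\mathcal{P}\sqcup\mathcal{Q}$ and equality of simplicial complexes is equality of their face sets. Recall that a face of an order complex $\Delta(\mathcal{R})$ is precisely a chain (a totally ordered subset) of the poset $\mathcal{R}$, and that a face of a join $\Delta(\mathcal{P}) * \Delta(\mathcal{Q})$ is, by definition of the join of simplicial complexes on disjoint vertex sets, a set of the form $\sigma \cup \tau$ where $\sigma$ is a face of $\Delta(\mathcal{P})$ (possibly empty) and $\tau$ is a face of $\Delta(\mathcal{Q})$ (possibly empty). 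So the goal reduces to the following equivalence: a subset $C \subseteq \mathcal{P}\sqcup\mathcal{Q}$ is a chain in $\mathcal{P}\oplus\mathcal{Q}$ if and only if $C\cap \mathcal{P}$ is a chain in $\mathcal{P}$ and $C\cap\mathcal{Q}$ is a chain in $\mathcal{Q}$.

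First I would prove the forward direction. Suppose $C$ is a chain in $\mathcal{P}\oplus\mathcal{Q}$. Writing $\sigma := C\cap\mathcal{P}$ and $\tau := C\cap\mathcal{Q}$, I note that any two elements of $\sigma$ are comparable in $\mathcal{P}\oplus\mathcal{Q}$ and both lie in $\mathcal{P}$; by clause (i) of the definition of the ordinal sum, comparability in $\mathcal{P}\oplus\mathcal{Q}$ between two elements of $\mathcal{P}$ is exactly comparability in $\mathcal{P}$ (clauses (ii) and (iii) cannot apply, as they require an element of $\mathcal{Q}$). Hence $\sigma$ is a chain in $\mathcal{P}$, and by the symmetric argument using clause (ii), $\tau$ is a chain in $\mathcal{Q}$. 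This shows $C = \sigma\cup\tau$ is a face of $\Delta(\mathcal{P}) * \Delta(\mathcal{Q})$.

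For the reverse direction, suppose $\sigma$ is a chain in $\mathcal{P}$ and $\tau$ is a chain in $\mathcal{Q}$; I must show $C := \sigma\cup\tau$ is a chain in $\mathcal{P}\oplus\mathcal{Q}$, i.e., any two elements of $C$ are comparable. If both lie in $\sigma\subseteq\mathcal{P}$, they are comparable in $\mathcal{P}$, hence in $\mathcal{P}\oplus\mathcal{Q}$ by clause (i); similarly for two elements of $\tau$ via clause (ii). The only remaining case is one element $x\in\sigma\subseteq\mathcal{P}$ and one element $y\in\tau\subseteq\mathcal{Q}$, and here clause (iii) applies directly to give $x\leq y$, so they are comparable. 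Thus $C$ is a chain, completing the equivalence and hence the equality of the two complexes.

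I do not expect any genuine obstacle here; the result is essentially immediate once one unwinds the definitions of the order complex and of the join of complexes on disjoint vertex sets. The only point requiring a small amount of care is the cross term in the reverse direction, namely ensuring that every pair consisting of one element from $\mathcal{P}$ and one from $\mathcal{Q}$ is automatically comparable in the ordinal sum — this is exactly what clause (iii) guarantees, and it is precisely the feature of the ordinal sum (as opposed to an arbitrary disjoint union of posets) that makes the join interpretation valid. It is worth emphasizing that the disjointness of $\mathcal{P}$ and $\mathcal{Q}$ is used to make the decomposition $C = (C\cap\mathcal{P})\cup(C\cap\mathcal{Q})$ unambiguous and to match the standard definition of the join on disjoint vertex sets.
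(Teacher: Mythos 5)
Your proof is correct and follows exactly the paper's approach: the paper simply observes (without spelling out details) that the simplices of $\Delta(\mathcal{P}\oplus\mathcal{Q})$ coincide with those of $\Delta(\mathcal{P})*\Delta(\mathcal{Q})$, which is precisely the chain-decomposition equivalence you verify. Your write-up just makes the paper's one-line observation explicit, including the key role of clause (iii) of the ordinal sum in the cross-comparability case.
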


The next Proposition justifies the usefulness of the notions of spherical and parabolic subracks.

\begin{prop}\label{prop:ord}
  Let $G$ be a finite group and let $X\subseteq G$ be a conjugation rack. Then $$ \Delta(\overline{\mathcal{R}(X)}) \simeq \Delta(\mathcal{S}(X)\oplus \mathcal{P}(X)). $$
\end{prop}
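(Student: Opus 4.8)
The plan is to realize the decomposition $\Delta(\overline{\mathcal{R}(X)}) \simeq \Delta(\mathcal{S}(X)\oplus \mathcal{P}(X))$ by first reducing to $\mathsf{Inf}(\mathcal{R}(X))$ and then exhibiting $\mathsf{Inf}(\mathcal{R}(X))$ as an ordinal sum of the two subposets (up to homotopy). By Proposition~\ref{prop:inf} we already have $\Delta(\overline{\mathcal{R}(X)}) \simeq \Delta(\mathsf{Inf}(\mathcal{R}(X)))$, so it suffices to prove that $\Delta(\mathsf{Inf}(\mathcal{R}(X))) \simeq \Delta(\mathcal{S}(X)\oplus \mathcal{P}(X))$. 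By Lemma~\ref{lem:ord} the right-hand side equals $\Delta(\mathcal{S}(X)) * \Delta(\mathcal{P}(X))$, so the real content is to show that $\mathsf{Inf}(\mathcal{R}(X))$ decomposes, up to homotopy, as a join of its spherical and parabolic parts.

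First I would record the order-theoretic structure of $\mathsf{Inf}(\mathcal{R}(X))$ relative to the partition into $\mathcal{S}(X)$ and $\mathcal{P}(X)$. The key structural fact is that every spherical subrack sits \emph{above} every parabolic subrack that it meets appropriately: more precisely, since a spherical subrack $S$ is a union of $\mathsf{Inn}(X)$-orbits and a parabolic subrack $P$ meets every spherical subrack, I expect to show that in $\mathsf{Inf}(\mathcal{R}(X))$ one has $P \leq S$ (equivalently $P \subseteq S$) whenever $P$ is parabolic and $S$ is spherical. I would prove this by observing that $\mathcal{S}(X)$ is the meet-subposet generated by the coatoms in $\mathcal{S}$ (the near-complete unions of orbits), so that the meet of all spherical coatoms containing a given parabolic $P$ is again spherical and contains $P$; the closure-operator description of $\mathsf{Inf}$ from Proposition~\ref{prop:inf} is exactly the tool for this. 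Establishing this comparability is what turns the disjoint union $\mathcal{S}(X)\sqcup\mathcal{P}(X)$ into an honest ordinal sum $\mathcal{P}(X)\oplus\mathcal{S}(X)$ on the nose (parabolics below, sphericals above).

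With the ordinal-sum identification in hand, the order complex of $\mathsf{Inf}(\mathcal{R}(X))$ is \emph{not} literally the ordinal sum complex unless every element is either spherical or parabolic, so I would next argue that the closure operator $\epsilon$ of Proposition~\ref{prop:inf}, or a further retraction, collapses any ``mixed'' elements of $\mathsf{Inf}(\mathcal{R}(X))$ onto the union $\mathcal{S}(X)\cup\mathcal{P}(X)$. Concretely, for any $x\in\mathsf{Inf}(\mathcal{R}(X))$ I would send $x$ to the meet of the spherical coatoms above it when that meet already lies over every orbit (the spherical case), and otherwise retain the parabolic core; applying Lemma~\ref{lem:poset} (in either the $\leq$ or the dual $\geq$ form via the accompanying Remark) to this order-preserving retraction yields a homotopy equivalence of $\Delta(\mathsf{Inf}(\mathcal{R}(X)))$ with $\Delta(\mathcal{S}(X)\oplus\mathcal{P}(X))$. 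Finally I would invoke Lemma~\ref{lem:ord} to rewrite the target as the join, completing the chain of equivalences.

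The main obstacle I anticipate is the second step: showing that the elements of $\mathsf{Inf}(\mathcal{R}(X))$ genuinely split into the two announced classes in an order-compatible way, and that no intermediate elements obstruct the ordinal-sum (hence join) structure. In particular I must verify carefully that the map retracting $\mathsf{Inf}(\mathcal{R}(X))$ onto $\mathcal{S}(X)\cup\mathcal{P}(X)$ is a genuine poset endomorphism comparable to the identity (so that Lemma~\ref{lem:poset} applies) and that its image is precisely $\mathcal{S}(X)\oplus\mathcal{P}(X)$ with the ordinal-sum order. The verification that $P\leq S$ for all parabolic $P$ and spherical $S$—which relies on the defining property that parabolic subracks meet every spherical subrack together with the orbit-union structure of spherical subracks—is the delicate point, and I would treat the two degenerate cases noted after the definitions (the group rack, where $\mathcal{P}(X)=\emptyset$, and the connected conjugacy class rack, where $\mathcal{S}(X)=\emptyset$) as consistency checks, since in each the ordinal sum degenerates to a single factor and the claimed equivalence reduces to Proposition~\ref{prop:inf}.
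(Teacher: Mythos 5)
Your proof hinges on the claim that every parabolic subrack is contained in every spherical subrack ($P\subseteq S$), and this claim is false; in fact it is impossible except in degenerate cases. Take $G=A_4$ and $X=G_3$ as in Example~\ref{ex:g3a4}: the spherical subracks are $\{1\}$, $C$, $D$, $C\cup\{1\}$, $D\cup\{1\}$, $C\cup D$, and the parabolic subracks are the four Sylow $3$-subgroups $P_i$. Here $P_1=\{(123),(132),1\}$ is contained in no spherical subrack whatsoever, so the ``meet of the spherical coatoms above $P_1$'' that you propose is an empty meet. Worse, $\mathcal{S}(X)$ contains disjoint members (e.g.\ $C$ and $D\cup\{1\}$), so any subrack contained in \emph{all} spherical subracks would be empty; your claim would force $\mathcal{P}(X)=\emptyset$ whenever there are at least two orbits. (Note also that the paper's ordinal sum $\mathcal{S}(X)\oplus\mathcal{P}(X)$ places sphericals \emph{below} parabolics, the opposite of your convention, though for order complexes this flip is immaterial.) The essential point you are missing is that the cross relations $S\leq P$ in $\mathcal{S}(X)\oplus\mathcal{P}(X)$ are decreed abstractly; they are \emph{not} inclusions, and in $\mathsf{Inf}(\mathcal{R}(X))$ a spherical and a parabolic subrack are typically incomparable (e.g.\ $C$ and $P_1$ above).

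This kills not just the lemma but the entire strategy: Lemma~\ref{lem:poset} (or any closure-operator/retraction argument) can only produce a homotopy equivalence between $\Delta(\mathsf{Inf}(\mathcal{R}(X)))$ and the order complex of an image subposet \emph{with its induced order}, and the induced order on $\mathcal{S}(X)\cup\mathcal{P}(X)$ is strictly weaker than the ordinal-sum order. In the $A_4$ example the induced-order complex is a hexagon (the proper part of $B_3$) with four pendant edges attached at the vertex $\{1\}$, which is homotopy equivalent to $S^1$, whereas $\Delta(\mathcal{S}(X)\oplus\mathcal{P}(X))=\Delta(\mathcal{S}(X))*\Delta(\mathcal{P}(X))$ is the join of a hexagon with four points, a wedge of three $2$-spheres; since these have different homology, no poset endomorphism of $\mathsf{Inf}(\mathcal{R}(X))$ can realize the desired equivalence. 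The paper's proof necessarily works differently: it defines $f\colon\mathsf{Inf}(\mathcal{R}(X))\to\mathcal{S}(X)\oplus\mathcal{P}(X)$ as a map between two \emph{different} posets (a non-parabolic $A$ goes to the union of the $\mathsf{Inn}(X)$-orbits meeting $A$, a parabolic $A$ goes to itself) and then applies Quillen's fiber theorem, checking that each fiber $f^{-1}\bigl((\mathcal{S}(X)\oplus\mathcal{P}(X))_{\leq B}\bigr)$ is contractible---a cone when $B$ is spherical, and conically contractible when $B$ is parabolic. This order-changing fiber-theorem step is the missing idea, and it cannot be replaced by a retraction comparable to the identity.
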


\begin{proof}
  By Proposition~\ref{prop:inf} we know that $\Delta(\overline{\mathcal{R}(X)})$ is homotopy equivalent to $\Delta(\mathsf{Inf}(\mathcal{R}(X)))$. So it is enough to show that $\Delta(\mathsf{Inf}(\mathcal{R}(X)))$ and $\Delta(\mathcal{S}(X)\oplus \mathcal{P}(X))$ share the same homotopy type. Let $f\colon \mathsf{Inf}(\mathcal{R}(X)) \to \mathcal{S}(X)\oplus \mathcal{P}(X)$ be the map defined in the following way: If $A\in \mathsf{Inf}(\mathcal{R}(X))$ does not intersect at least one $\mathsf{Inn}(X)$-orbit in $X$, then $f(A)$ is the union of the $\mathsf{Inn}(X)$-orbits of the elements in $A$. If $A\in \mathsf{Inf}(\mathcal{R}(X))$ is a parabolic subrack, then $f(A)=A$. Clearly, $f$ is a poset map.
  Let $B\in \mathcal{S}(X)\oplus \mathcal{P}(X)$. Next, we consider the fiber $f^{-1}((\mathcal{S}(X)\oplus \mathcal{P}(X))_{\leq B})$. If $B$ is spherical, then the fiber $f^{-1}((\mathcal{S}(X)\oplus \mathcal{P}(X))_{\leq B})$ would be a cone with apex $B$, hence, is contractible. If $B$ is parabolic, then the fiber would be conically contractible. To see this observe that the map taking an element $C\in f^{-1}((\mathcal{S}(X)\oplus \mathcal{P}(X))_{\leq B})$ to $C\cap B$ defines a closure operator from the fiber to itself whose image is a cone. The result follows from Quillen's fiber theorem (see \cite[Theorem~1.6]{Qui78}).
\end{proof}

Combining the previous results gives the following Theorem on $p$-power racks.

\begin{thm}\label{thm:p}
  Let $G$ be a finite group and $p$ be a prime number dividing the order of $G$. Let $c$ be the number of conjugacy classes of $p$-power elements in $\langle G_p \rangle$. Then $$ \Delta(\overline{\mathcal{R}(G_p)}) \simeq S^{c-2} * \Delta(\mathcal{P}(G_p)). $$ 
\end{thm}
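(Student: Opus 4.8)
The plan is to combine the decomposition from Proposition~\ref{prop:ord} with the spherical structure of $\mathcal{S}(G_p)$, and then resolve the bookkeeping between the parameter $c$ and the number of $\mathsf{Inn}(G_p)$-orbits. By Proposition~\ref{prop:ord} we already have $\Delta(\overline{\mathcal{R}(G_p)}) \simeq \Delta(\mathcal{S}(G_p)\oplus \mathcal{P}(G_p))$, and by Lemma~\ref{lem:ord} the right-hand side equals the join $\Delta(\mathcal{S}(G_p)) * \Delta(\mathcal{P}(G_p))$. So the entire problem reduces to identifying the homotopy type of $\Delta(\mathcal{S}(G_p))$ as a sphere of the correct dimension.

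The key step is therefore to compute the dimension of the sphere $\Delta(\mathcal{S}(G_p))$. As noted in the discussion preceding the theorem, $\mathcal{S}(G_p)$ is isomorphic to the proper part of a Boolean algebra, so $\Delta(\mathcal{S}(G_p))$ is homeomorphic to a sphere; the only thing to pin down is the rank of that Boolean algebra. First I would show that the spherical subracks $\mathcal{S}(G_p)$ correspond precisely to proper non-empty unions of $\mathsf{Inn}(G_p)$-orbits that still qualify as meets of coatoms, so that $\widehat{\mathcal{S}(G_p)}$ is the Boolean algebra $B_m$ on the set of $\mathsf{Inn}(G_p)$-orbits, where $m$ is the number of such orbits. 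Since $\Delta(\overline{B_m})$ triangulates $S^{m-2}$, this gives $\Delta(\mathcal{S}(G_p)) \simeq S^{m-2}$. The remaining arithmetic point is the identification $m = c$: I would argue that the $\mathsf{Inn}(G_p)$-orbits on $G_p$ are exactly the conjugacy classes of $p$-power elements in the group $\langle G_p\rangle$ they generate. Indeed, $\mathsf{Inn}(G_p)$ acts on $G_p$ through the inner automorphisms induced by the $\phi_a$, and since $G_p$ is a conjugation rack inside $G$, these orbits are the restrictions to $G_p$ of conjugation by elements of $\langle G_p\rangle$; one checks that two $p$-power elements lie in the same $\mathsf{Inn}(G_p)$-orbit iff they are conjugate in $\langle G_p\rangle$. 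Hence $m = c$ and $\Delta(\mathcal{S}(G_p)) \simeq S^{c-2}$.

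Assembling the pieces, the join decomposition yields
\[
  \Delta(\overline{\mathcal{R}(G_p)}) \simeq \Delta(\mathcal{S}(G_p)) * \Delta(\mathcal{P}(G_p)) \simeq S^{c-2} * \Delta(\mathcal{P}(G_p)),
\]
which is exactly the claimed statement. I would also dispose of the degenerate cases explicitly: if $G_p$ is connected (a single Sylow $p$-subgroup up to conjugacy collapsing the orbit count, e.g.\ when there is only one relevant orbit) then $\mathcal{S}(G_p)$ could be empty and the join degenerates appropriately, while the parabolic factor absorbs the whole homotopy type; conversely if $\mathcal{P}(G_p)$ is empty the formula recovers the pure-sphere case consistent with the group-rack situation from case~(i) in the discussion above.

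The main obstacle I anticipate is the orbit-counting identification $m = c$: one must be careful that $\mathsf{Inn}(G_p)$-orbits are taken in the rack-automorphism sense (orbits under the group generated by the $\phi_a$ for $a\in G_p$) and verify these coincide with $\langle G_p\rangle$-conjugacy classes of $p$-power elements, rather than $G$-conjugacy classes, since $\langle G_p\rangle$ may be a proper subgroup of $G$. A secondary subtlety is confirming that every proper union of orbits is genuinely realized as a meet of coatoms of the form in $\mathcal{S}$ (so that $\widehat{\mathcal{S}(G_p)}$ really is the \emph{full} Boolean algebra $B_m$ and not a proper subposet), which is where the hypothesis that these coatoms are unions of all-but-one orbit is used.
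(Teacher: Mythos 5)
Your proposal is correct and follows essentially the same route as the paper's proof: Proposition~\ref{prop:ord} together with Lemma~\ref{lem:ord} reduces the theorem to identifying $\Delta(\mathcal{S}(G_p))$ with a $(c-2)$-sphere, and your verification that the $\mathsf{Inn}(G_p)$-orbits on $G_p$ are exactly the $\langle G_p\rangle$-conjugacy classes of $p$-power elements (so $m=c$) is precisely the bookkeeping the paper leaves implicit.

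One correction to the route you sketch for your ``secondary subtlety.'' You propose to use ``the hypothesis that these coatoms are unions of all-but-one orbit,'' i.e.\ that each union of all but one orbit is a coatom of $\mathcal{R}(G_p)$; that premise is false in general. Take $G=S_3$ and $p=2$, so $G_2=\{1,(12),(13),(23)\}$ with orbits $\{1\}$ and the class $T$ of transpositions. The orbit union $\{1\}$ is not a coatom, since $\{1,(12)\}$ is a proper subrack strictly containing it; nevertheless $\{1\}$ lies in $\mathsf{Inf}(\mathcal{R}(G_2))$, being the meet of the \emph{non-spherical} coatoms $\{1,(12)\}$ and $\{1,(13)\}$, so $\mathcal{S}(G_2)\cong\overline{B_2}$ still holds. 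The argument that works in general is an invariance argument: for an orbit $O$, conjugation by any $g\in\langle G_p\rangle$ is a rack automorphism of $G_p$ fixing $G_p\setminus O$ setwise (it is a union of $\langle G_p\rangle$-classes), so it permutes the maximal subracks containing $G_p\setminus O$; hence the intersection of all of them is $\langle G_p\rangle$-invariant, i.e.\ itself a union of orbits, and being a proper subrack sandwiched between $G_p\setminus O$ and $G_p$ it must equal $G_p\setminus O$. Thus $G_p\setminus O\in\mathsf{Inf}(\mathcal{R}(G_p))$, and an arbitrary proper non-empty union of orbits, being an intersection of such sets, is also a meet of coatoms. This gives $\widehat{\mathcal{S}(G_p)}\cong B_c$ honestly, after which your assembly of the join decomposition is complete. (The same example shows that the paper's own preceding remark, that $\mathcal{S}(X)$ coincides with $\mathsf{Inf}(\mathcal{R}(X),\mathcal{S})$ for the spherical coatoms $\mathcal{S}$, needs the identical repair, so the gap you flagged is real but lies in the paper as much as in your sketch.)
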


\begin{proof}
  Clearly, the complex $\Delta(\mathcal{S}(G_p))$ is homeomorphic to a $(c-2)$-sphere. So, the proof follows from Proposition~\ref{prop:ord} and Lemma~\ref{lem:ord}.
\end{proof}

\begin{rem}
  Let $\mathcal{Q}$ be the poset which is obtained by adding a unique minimal element $\hat{0}$ to $\mathcal{P}(G_p)$. Observe that $\Delta(\mathcal{Q})$ is contractible. Let $f\colon \mathcal{S}(G_p)\oplus \mathcal{P}(G_p) \to \mathcal{Q}$ be the poset map which takes any element of $\mathcal{S}(G_p)$ to $\hat{0}$ and any element of $\mathcal{P}(G_p)$ to itself. Then $f$ satisfies the conditions of Theorem~\ref{thm:fiber}, i.e., for any $B\in\mathcal{Q}$ the fiber $\Delta(f^{-1}(\mathcal{Q}_{\leq B}))$ is $\ell(f^{-1}(\mathcal{Q}_{<B}))$-connected. Clearly, the fiber is contractible whenever $B\neq \hat{0}$. And if $B=\hat{0}$, we have $\Delta(f^{-1}(\mathcal{Q}_{\leq B}))\simeq S^{c-2}$ and $\Delta(\mathcal{Q}_{>B})\simeq \Delta(\mathcal{P}(G_p))$. This gives an alternative proof of Theorem~\ref{thm:p}.
\end{rem}

\begin{rem}
  By Lemma~\ref{lem:co} we know that the bounded poset $\widehat{\mathsf{Inf}(\mathcal{R}(X))}$ is a coatomic lattice. However, $\widehat{\mathcal{P}(X)}$ may not even be a lattice as the intersection of some maximal elements of $\mathcal{R}(X)$ that are containing a representative element from each $\mathsf{Inn}(X)$-orbit may be a nonempty subrack of $X$ that does not lie in $\mathcal{P}(X)$.
\end{rem}

\begin{cor}[see {\cite[Proposition~1.3]{HSW19} and Theorem~\ref{thm:remA}}]
  Let $G$ be a finite group and $P$ be a Sylow $p$-subgroup of $G$. Suppose $P$ is a nontrivial normal subgroup of $G$. Then $\Delta(\overline{\mathcal{R}(G_p)})$ is homotopy equivalent to a $(c-2)$-sphere, where $c$ is the number of conjugacy classes in $P$.
\end{cor}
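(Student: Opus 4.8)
The plan is to show that the normality hypothesis collapses $G_p$ to an ordinary group rack, after which the parabolic part in Theorem~\ref{thm:p} disappears. First I would observe that a normal Sylow $p$-subgroup is the \emph{unique} Sylow $p$-subgroup of $G$: since all Sylow $p$-subgroups are conjugate and $P\trianglelefteq G$, it admits no other conjugate. Consequently every element of $G$ of $p$-power order lies in $P$, so that $G_p=P$ as sets. This is in fact an equality of racks, because the operation $a\triangleright b=aba^{-1}$ restricted to $P$ is exactly the group rack structure on $P$; in particular $\langle G_p\rangle=P$. As every element of $P$ has $p$-power order, the integer $c$ appearing in Theorem~\ref{thm:p}, namely the number of conjugacy classes of $p$-power elements in $\langle G_p\rangle$, is precisely the number of conjugacy classes of $P$, matching the statement of the corollary.

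The second step is to identify $\mathcal{P}(G_p)$. Because $G_p=P$ is a group rack (namely the group rack of $P$), the reasoning of case~(i) of the discussion following the definition of spherical and parabolic subracks applies with $P$ in place of the ambient group: the $\mathsf{Inn}(G_p)$-orbits are the conjugacy classes of $P$, every element of $\mathsf{Inf}(\mathcal{R}(P))$ is a union of such classes, and a parabolic subrack would be forced to meet each of these classes, hence equal all of $P$---which is excluded from the proper part. Therefore $\mathcal{P}(G_p)=\emptyset$.

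Finally I would invoke Theorem~\ref{thm:p} to obtain $\Delta(\overline{\mathcal{R}(G_p)})\simeq S^{c-2}*\Delta(\mathcal{P}(G_p))$. Since $\mathcal{P}(G_p)$ is the empty poset, its order complex is the $(-1)$-sphere $S^{-1}$ (the complex whose only face is the empty face), which is a unit for the join; concretely $S^{c-2}*S^{-1}=S^{c-2}$ via the rule $S^a*S^b=S^{a+b+1}$. This yields $\Delta(\overline{\mathcal{R}(G_p)})\simeq S^{c-2}$, as desired.

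I do not expect a genuine obstacle here: all the content sits in the reduction $G_p=P$, after which Theorem~\ref{thm:p} does the work. The only point demanding care is the degenerate join with the empty poset, where one must adopt the convention $\Delta(\emptyset)=S^{-1}$ together with its neutrality for $*$. As an alternative route one could bypass Theorem~\ref{thm:p} altogether and simply note that $G_p=P$ is a group rack and quote \cite[Proposition~1.3]{HSW19} directly; I prefer the argument above precisely because it exhibits the corollary as an instance of the spherical/parabolic decomposition developed in this section.
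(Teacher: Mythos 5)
Your proposal is correct and follows essentially the same route as the paper: identify $G_p=P$ via normality of the Sylow $p$-subgroup, observe that $G_p$ is then a group rack so that $\mathcal{P}(G_p)=\emptyset$ by the case~(i) discussion, and conclude from Theorem~\ref{thm:p} using the convention that the join with the empty complex is neutral. Your write-up merely spells out details the paper leaves implicit (the count $c$, the emptiness of the parabolic poset via spherical conjugacy classes, and the degenerate join), so there is nothing to change.
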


\begin{proof}
  Since $P=G_p$ is a group rack, $\mathcal{P}(G_p)$ would be the empty poset. By Theorem~\ref{thm:p} the complex $\Delta(\overline{\mathcal{R}(G_p)})$ is homotopy equivalent to a $(c-2)$-sphere.
\end{proof}

\begin{rem}
  Notice that $\Delta(\overline{\mathcal{R}(G_p)})$ would be homotopy equivalent to a sphere whenever $G$ is nilpotent and $p$ divides the order of $G$.
\end{rem}

\begin{cor}\label{cor:p}
  Let $G$ be a finite group and $p$ be a prime number dividing the order of $G$. Let $c$ be the number of conjugacy classes of $p$-power elements in $\langle G_p \rangle$ and $k>1$ be the number of Sylow $p$-subgroups. Suppose Sylow $p$-subgroups are the only elements of $\mathcal{P}(G_p)$. Then $\Delta(\overline{\mathcal{R}(G_p)})$ is homotopy equivalent to the wedge of $k-1$ many spheres of dimension $c-1$.
\end{cor}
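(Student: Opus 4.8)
The plan is to apply Theorem~\ref{thm:p} and reduce the problem to computing the homotopy type of $\Delta(\mathcal{P}(G_p))$ under the hypothesis that the only parabolic subracks are the Sylow $p$-subgroups themselves. First I would invoke Theorem~\ref{thm:p} to obtain $\Delta(\overline{\mathcal{R}(G_p)}) \simeq S^{c-2} * \Delta(\mathcal{P}(G_p))$. By hypothesis the elements of $\mathcal{P}(G_p)$ are precisely the $k$ Sylow $p$-subgroups $P_1,\dots,P_k$. I need to understand the partial order these inherit from $\mathsf{Inf}(\mathcal{R}(G_p))$. Since each $P_i$ is a maximal subrack of $G_p$ (a coatom), no two of them are comparable, so $\mathcal{P}(G_p)$ is an antichain on $k$ vertices.

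Next I would identify the homotopy type of the order complex of a $k$-element antichain. An antichain has no chains of length greater than $0$, so $\Delta(\mathcal{P}(G_p))$ consists of exactly $k$ isolated points, i.e.\ it is a discrete space with $k$ points, which is homotopy equivalent to a wedge of $k-1$ copies of $S^0$ (equivalently, $k$ points has reduced homology only in degree $0$ of rank $k-1$). The remaining step is then a join computation: the join of $S^{c-2}$ with a $0$-dimensional sphere shifts dimension up by one, and joining with a wedge distributes. Concretely, $S^{c-2} * \left(\bigvee_{k-1} S^0\right) \simeq \bigvee_{k-1}\left(S^{c-2} * S^0\right) \simeq \bigvee_{k-1} S^{c-1}$, using the standard fact that $S^a * S^b \simeq S^{a+b+1}$ and that the join distributes over wedges up to homotopy. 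This yields a wedge of $k-1$ spheres of dimension $c-1$, as claimed.

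The main point requiring care is the justification that $\mathcal{P}(G_p)$ is genuinely an antichain rather than merely a set of incomparable maximal subracks that might coincide with some spherical element or fail to be coatoms; I would verify from the definition that each Sylow $p$-subgroup $P_i$ is a maximal (proper, nonempty) subrack of $G_p$ meeting every spherical subrack nontrivially (it contains the identity, which lies in every orbit union), so that the $P_i$ are indeed coatoms lying in $\mathcal{P}(G_p)$, and that distinct Sylow subgroups are incomparable. The only genuine subtlety is the homotopy-theoretic bookkeeping in the join: one must know that the join with a sphere is well-behaved on wedges, but since all spaces here have the homotopy type of wedges of spheres (hence CW complexes), the distributivity and dimension-shift formulas for joins apply without difficulty. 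I do not expect any serious obstacle beyond confirming the antichain structure from the standing hypothesis.
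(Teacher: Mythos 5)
Your argument is correct and is essentially the paper's own proof: the paper likewise observes that under the hypothesis $\Delta(\mathcal{P}(G_p))$ is a wedge of $k-1$ many $0$-spheres and then invokes Theorem~\ref{thm:p}, with the join computation $S^{c-2} * \bigl(\bigvee_{k-1} S^0\bigr) \simeq \bigvee_{k-1} S^{c-1}$ carried out exactly as you do.

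One correction to the verification you flag as "the main point requiring care." The parenthetical reason you give for each Sylow $p$-subgroup meeting every spherical subrack --- that it contains the identity, "which lies in every orbit union" --- is false: a spherical subrack is a union of $\mathsf{Inn}(G_p)$-orbits lying in $\mathsf{Inf}(\mathcal{R}(G_p))$, and it need not contain the orbit $\{1\}$. Indeed, in Example~\ref{ex:g3a4} of the paper ($G=A_4$, $p=3$), the set $C\cup D$ of all eight $3$-cycles is a coatom of $\mathcal{R}(G_3)$ that is a union of orbits, hence spherical, and it avoids the identity. The correct reason is Sylow's theorem: every $p$-power element is conjugate into any given Sylow $p$-subgroup, so every nonempty union of conjugacy classes of $p$-power elements meets every Sylow $p$-subgroup. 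Similarly, the incomparability of distinct Sylow subgroups needs no appeal to their being coatoms (which is not automatic --- in the paper's dihedral group of order $30$ example, parabolic subracks strictly contain the Sylow $2$-subgroups); it follows simply because they are distinct sets of equal cardinality. Neither slip damages the proof, since the corollary's hypothesis is naturally read as asserting that $\mathcal{P}(G_p)$ consists precisely of the $k$ Sylow $p$-subgroups, which makes your extra verification redundant; but as written its justification is not right.
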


\begin{proof}
  Clearly, $\Delta(\mathcal{P}(G_p))$ is a wedge of $k-1$ many $0$-spheres. The result follows from Theorem~\ref{thm:p}.
\end{proof}

\begin{rem}
  In Example~\ref{ex:g3a4} the $3$-power rack $G_3$ of the alternating group $G=A_4$ is studied (see Figure~\ref{fig:g3a4}). In $G_3$ there are $3$ conjugacy classes and $\Delta(\mathcal{P}(G_3))$ consists of four isolated points. By Corollary~\ref{cor:p} the homotopy type of $\Delta(\overline{\mathcal{R}(G_3)})$ is the wedge of $3$ spheres of dimension $2$.
\end{rem}

\begin{example}
  Let $G$ be the dihedral group of order $30$. As is well known this group has a  presentation $G = \langle a, b \mid a^{15} = b^2 = 1, bab = a^{14} \rangle$ and the number of conjugates of $b$ is $15$. Hence $G_2 = C\cup \{1\}$ where $C$ is the conjugacy class of $b$. One can easily observe that the minimal elements of $\mathcal{P}(G_2)$ are exactly the Sylow $2$-subgroups of $G$. Moreover, any proper subgroup of $G$ containing a Sylow $2$-subgroup of $G$ as a proper subgroup corresponds to a maximal element of $\mathcal{P}(G_2)$ and vice versa. Figure~\ref{fig:d30} depicts the Hasse diagram of $\mathcal{P}(G_2)$. In this figure the 15 nodes lying on the lower plane represents the minimal elements. There are on total $30$ edges in this figure and $22$ of those edges are already labeled with the integer from $1$ to $22$. If we further label the remaining $8$ edges in any order with the integers from $23$ to $30$, this would yield a shelling order for the facets of $\Delta(\mathcal{P}(G_2))$. Moreover, in this shelling the boundary of a facet is contained in the union of earlier facets if and only if the facet is among the last eight. By Theorem~\ref{thm:shell}, the homotopy type of $\Delta(\mathcal{P}(G_2))$ is the wedge of $8$ circles. Then, by Theorem~\ref{thm:p}, the homotopy type of $\Delta(\overline{\mathcal{R}(G_2)})$ is the wedge of $8$ spheres each of dimension $2$.

    \begin{figure}[!ht]
      \centering
      \includegraphics[width=0.95\textwidth]{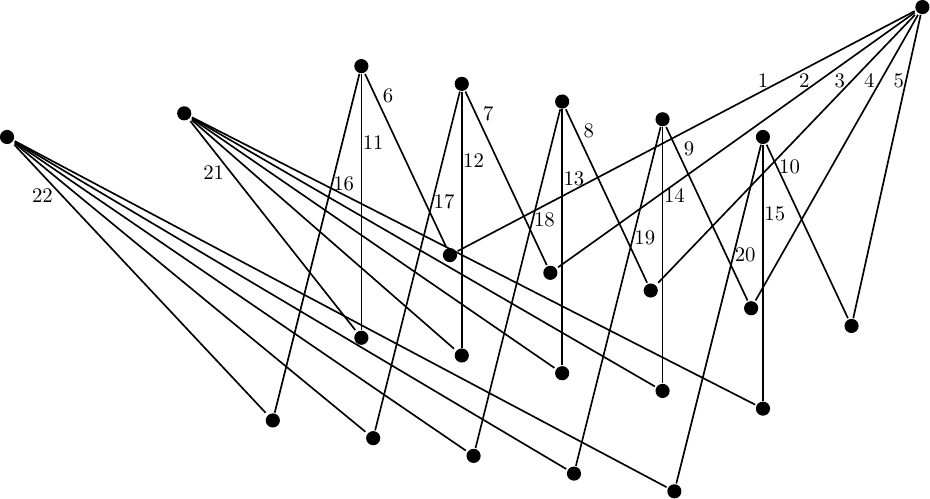}
      \caption{The poset of parabolic elements in $\mathsf{Inf}(\mathcal{R}(G_2))$, where $G$ is the dihedral group of order $30$.}
      \label{fig:d30}
    \end{figure}

\end{example}

\begin{example}
  Let $G=A_5$ be the alternating group and consider the $2$-power rack $G_2$. In $A_5$ there are $15$ elements of order $2$ and those elements form a single conjugacy class, say $C$. Also Sylow $2$-subgroups are elementary abelian in this group. Hence, similar to the previous example, $G_2$ is the union of $C$ and $\{1\}$. Notice that the minimal elements of $\mathcal{P}(G_2)$ are exactly the subgroups of $A_5$ of order $2$. Let us relabel the elements of $C$ in the following way:
    \begin{align*}
      & (1,2)(3,4)\to a,\, (1,2)(3,5)\to b,\, (1,2)(4,5)\to c,\, (1,3)(2,4)\to d,\, (1,3)(2,5)\to e, \\
      & (1,3)(4,5)\to f,\, (1,4)(2,3)\to g,\, (1,4)(2,5)\to h,\, (1,4)(3,5)\to i,\, (1,5)(2,3)\to j, \\
      & (1,5)(2,4)\to k,\, (1,5)(3,4)\to l,\, (2,3)(4,5)\to m,\, (2,4)(3,5)\to n,\, (2,5)(3,4)\to o.
    \end{align*}
    Figure~\ref{fig:g2a5} represents the Hasse diagram of $\mathcal{P}(G_2)$ (for the sake of succinctness we did not specify the identity element $1$ when we describe the corresponding subrack in the labels of the nodes in this figure). Unlike the previous example $\mathcal{P}(G_2)$ is not pure. However, $\mathsf{Inf}(\mathcal{P}(G_2))$ is pure and of dimension $1$. The boldface nodes in the figure correspond to the elements of $\mathsf{Inf}(\mathcal{P}(G_2))$. Further computations show that the homotopy type of $\Delta(\mathcal{P}(G_2))$ is the wedge of $40$ circles and, hence,  the homotopy type of $\Delta(\overline{\mathcal{R}(G_2)})$ would be the wedge of $40$ spheres each of dimension $2$.

    \begin{sidewaysfigure}
      \centering
      \includegraphics[width=\textwidth]{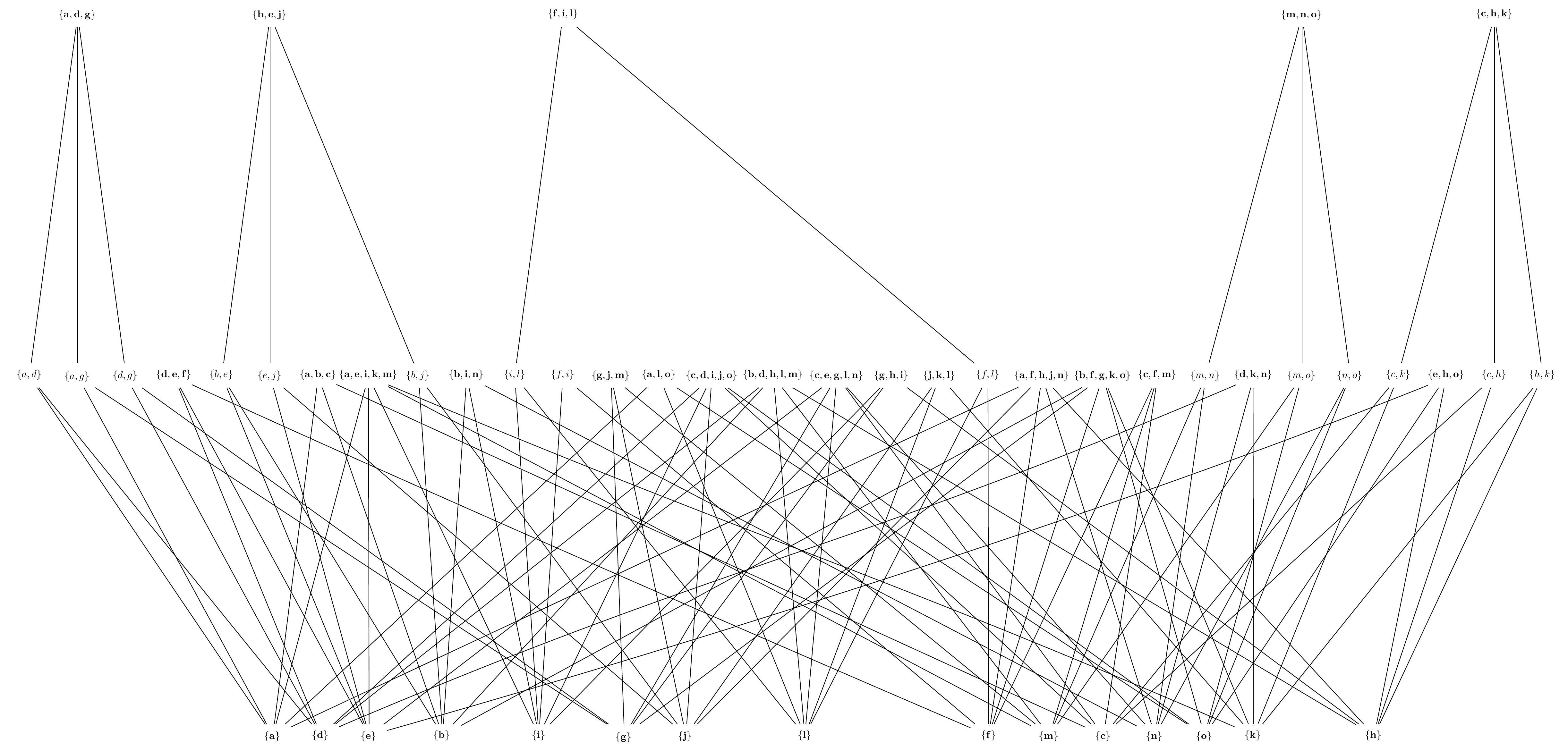}
      \caption{The poset of parabolic elements in $\mathsf{Inf}(\mathcal{R}(G_2))$, where $G$ is the alternating group $A_5$.}
      \label{fig:g2a5}
    \end{sidewaysfigure}

\end{example}

In the last example we observe that there are maximal elements of $\mathcal{P}(G_2)$ which are not union of some Sylow $2$-subgroups when $G=A_5$. However, any parabolic element in $\mathsf{Inf}(\mathcal{R}(G_p))$ must necessarily be a union of Sylow $p$-subgroups if the minimal elements of $\mathcal{P}(G_p)$ are exactly the Sylow $p$-subgroups of $G$. Hence, we may ask the following question.

\begin{question}
  For which finite groups $G$ and prime numbers $p$, the minimal elements of $\mathcal{P}(G_p)$ are exactly the Sylow $p$-subgroups of $G$?
\end{question}

Let $G$ be a group and $p$ be a prime number dividing $|G|$. Recall that we define $\mathcal{S}_p(G)$ as the poset of nontrivial $p$-subgroups of $G$. An important result in the field of subgroup complexes, nicknamed homological Sylow theorem, states that the Euler characteristic $\chi(\Delta(\mathcal{S}_p(G)))\equiv 1$ modulo $|G|_p$, where $|G|_p$ is the order of a Sylow $p$-subgroup of $G$ (see \cite{Br75}). Equivalently, $|G|_p$ divides the reduced Euler characteristic $\tilde{\chi}(\Delta(\mathcal{S}_p(G)))$. Since $\mathcal{R}(G_p)$ is just the rack analogue of the poset $\mathcal{S}_p(G)$, it is natural to ask whether a similar result is valid in the context of subrack lattices. The next result tells us that this is indeed the case when $\mathcal{P}(G_p)$ has a certain property. The proof is also similar to the proof of the homological Sylow theorem (see \cite[Section~4]{Qui78} and \cite[Proof~1 of Theorem~5.3.1]{Smi11}). Recall that the \emph{$p$-core} $O_p(G)$ of $G$ is defined as the largest normal $p$-subgroup of $G$. Obviously $O_p(G)$ coincides with the $p$-power rack $G_p$ exactly when $G$ has a normal Sylow $p$-subgroup.

\begin{thm}\label{thm:euler}
  Let $G$ be a finite group and $p$ be a prime number dividing $|G|$. Suppose for any  $p$-subgroup $J$ of $G$ the set $\{S\in \mathcal{P}(G_p)\mid J\subseteq S\}$ has a unique minimum element. Then $$ \tilde{\chi}(\Delta(\overline{\mathcal{R}(G_p)})) \equiv 0 \mod{|G|_p}\; \text{ if and only if }\; O_p(G) \neq G_p. $$
\end{thm}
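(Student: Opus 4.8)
The plan is to reduce the congruence to a divisibility property of $\tilde{\chi}(\Delta(\mathcal{P}(G_p)))$ and then to run a fixed-point and orbit-counting argument on the $G$-poset $\mathcal{P}(G_p)$, paralleling Quillen's proof of the homological Sylow theorem. First I would invoke Theorem~\ref{thm:p}, which gives $\Delta(\overline{\mathcal{R}(G_p)}) \simeq S^{c-2} * \Delta(\mathcal{P}(G_p))$, together with the standard identity $\tilde{\chi}(A * B) = -\,\tilde{\chi}(A)\,\tilde{\chi}(B)$. Since $\tilde{\chi}(S^{c-2}) = (-1)^{c-2} = \pm 1$, this yields $\tilde{\chi}(\Delta(\overline{\mathcal{R}(G_p)})) = \pm\,\tilde{\chi}(\Delta(\mathcal{P}(G_p)))$, so $|G|_p$ divides one side exactly when it divides the other. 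This already settles one implication by contraposition: if $O_p(G) = G_p$ then $G_p$ is a normal Sylow subgroup, hence a group rack, so $\mathcal{P}(G_p) = \emptyset$ and $\Delta(\overline{\mathcal{R}(G_p)}) \simeq S^{c-2}$; thus $\tilde{\chi} = \pm 1$, which is not divisible by $|G|_p > 1$. It therefore remains to prove that $O_p(G) \neq G_p$ forces $|G|_p \mid \tilde{\chi}(\Delta(\mathcal{P}(G_p)))$.

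For the remaining implication I would fix a Sylow $p$-subgroup $P$ and regard $\mathcal{P}(G_p)$ as a $P$-poset via conjugation, then split the chains of $\Delta(\mathcal{P}(G_p))$ according to their $P$-stabilizer. Because the order complex of a poset fixes a chain pointwise as soon as it fixes it setwise, a chain lies in the $P$-singular subcomplex $\Delta(\mathcal{P}(G_p))^{P-\mathrm{sing}}$ precisely when its $P$-stabilizer is nontrivial; every other chain has trivial $P$-stabilizer and hence lies in a free $P$-orbit of size $|P| = |G|_p$, all of whose members share the same dimension. Summing signs over $P$-orbits then gives $\chi(\Delta(\mathcal{P}(G_p))) \equiv \chi(\Delta(\mathcal{P}(G_p))^{P-\mathrm{sing}}) \pmod{|G|_p}$. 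If I can show that $\Delta(\mathcal{P}(G_p)^J)$ is contractible for every nontrivial $p$-subgroup $J \leq P$, then Lemma~\ref{lem:fix} makes the singular subcomplex contractible, so its Euler characteristic equals $1$; consequently $\chi(\Delta(\mathcal{P}(G_p))) \equiv 1$ and $\tilde{\chi}(\Delta(\mathcal{P}(G_p))) \equiv 0 \pmod{|G|_p}$, as required.

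The hard part will be the contractibility of the fixed subposets $\mathcal{P}(G_p)^J$, and this is exactly where the hypothesis on $G$ is used. For a nontrivial $p$-subgroup $J$ the hypothesis supplies a canonical parabolic $\overline{J}$, namely the unique minimal element of $\{S \in \mathcal{P}(G_p) \mid J \subseteq S\}$; uniqueness makes the assignment $J \mapsto \overline{J}$ equivariant, so $\overline{J}$ is $J$-invariant and lies in $\mathcal{P}(G_p)^J$. I would then contract $\mathcal{P}(G_p)^J$ onto $\overline{J}$ in the manner of Quillen's map $R \mapsto RJ$ on $\mathcal{S}_p(G)$: to a $J$-invariant parabolic $S$ one assigns the smallest parabolic $h(S)$ containing both $S$ and $J$, obtaining a $J$-equivariant poset endomorphism with $S \leq h(S)$ and, by minimality of $\overline{J}$, with $h(S) \geq \overline{J}$; these cone inequalities give a conical contraction onto $\overline{J}$.

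The delicate point, and the reason the hypothesis is imposed, is that a $J$-invariant parabolic need \emph{not} contain $J$, so $\overline{J}$ is not simply the minimum of $\mathcal{P}(G_p)^J$ and the contraction cannot be read off directly. Thus the technical heart is to show that the enlargement $S \mapsto h(S)$ is well defined, order preserving, and stays inside $\mathcal{P}(G_p)$: the hypothesis guarantees canonical minimal parabolics over $p$-subgroups, and what must be verified is that this closure extends to the union $S \cup J$ so that $\{T \in \mathcal{P}(G_p) \mid S \cup J \subseteq T\}$ has a least element. Establishing this contractibility rigorously is the crux of the argument; once it is in hand, the bookkeeping of the previous two paragraphs completes the proof.
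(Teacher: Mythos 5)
Your outline reproduces the paper's proof strategy almost exactly: the reduction of the congruence to $\tilde{\chi}(\Delta(\mathcal{P}(G_p)))$ via Theorem~\ref{thm:p} (the paper does the same computation through Proposition~\ref{prop:ord} and the ordinal-sum/join identity), the disposal of the normal-Sylow case, the free-orbit counting of chains with trivial $P$-stabilizer modulo $|G|_p$, the use of Lemma~\ref{lem:fix} to contract the $P$-singular subcomplex, and the use of the hypothesis to produce the cone point $\overline{J}$ fixed by $J$. All of that bookkeeping is sound. However, the step you explicitly defer --- well-definedness of the enlargement $h(S)$ --- is a genuine gap, and it is precisely the step that constitutes the substance of the paper's proof. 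Concretely, you never show that $\{T\in\mathcal{P}(G_p)\mid S\cup J\subseteq T\}$ is nonempty, i.e.\ that the subrack join $J\vee S$ is a \emph{proper} subrack of $G_p$. This is not automatic: for a general subrack and $p$-subgroup the join can be all of $G_p$ (already in $A_4$ with $p=3$, two distinct Sylow $3$-subgroups generate the whole rack $G_3$), so parabolicity of $S$ must enter the argument somewhere, and your proposal never uses it. The theorem's hypothesis cannot rescue you here, since $S\cup J$ is not a $p$-subgroup.

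The paper fills this gap with a short normalizer argument. Since $S\in\mathcal{P}(G_p)^J$, the set $S$ is normalized by $J$ and by each of its own elements, so $S\cup J\subseteq N_G(S)$ and hence $J\vee S\subseteq N_G(S)\cap G_p$. The latter is a proper subrack of $G_p$: otherwise every element of $G_p$ would normalize $S$, making $S$ a union of $\mathsf{Inn}(G_p)$-orbits lying in $\mathsf{Inf}(\mathcal{R}(G_p))$, i.e.\ a spherical subrack, contradicting the disjointness of $\mathcal{S}(G_p)$ and $\mathcal{P}(G_p)$. Therefore $J\vee S$ lies below some coatom of $\mathcal{R}(G_p)$, and the meet $f(S)$ of all coatoms above $J\vee S$ is an element of $\mathcal{P}(G_p)^J$ containing $S\cup J$: it lies in $\mathsf{Inf}(\mathcal{R}(G_p))$ by construction, it contains the parabolic $S$ and so meets every spherical subrack, and it is $J$-invariant because $J$ permutes the coatoms above $J\vee S$. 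Once nonemptiness is known, your least element exists automatically --- the intersection of all parabolics containing $S\cup J$ is again a meet of coatoms and again parabolic, and in fact equals the paper's closure operator $f$ --- after which your conical contraction $S\leq h(S)\geq \overline{J}$ (the paper instead applies Lemma~\ref{lem:poset} to the closure operator $f$, whose image is a cone with apex $\overline{J}$) and the remaining two paragraphs of your plan go through as in the paper. So: right architecture, but the one point you label ``the crux'' is exactly what the paper proves, and it requires an idea (the normalizer bound and spherical/parabolic disjointness) that is absent from your proposal.
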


\begin{proof}
  Suppose $P$ is the normal Sylow $p$-subgroup of $G$. In such a case $\tilde{\chi}(\Delta(\overline{\mathcal{R}(G_p)}))$ would be either $+1$ or $-1$ as the subrack complex of a group rack is homotopy equivalent to a sphere by \cite[Proposition~1.3]{HSW19}.

  Next, suppose $G$ has a non-normal Sylow $p$-subgroup $P$. In such a case $\Delta(\mathcal{P}(G_p))$ would be non-empty. Let $p^k$ be the order of $P$. Our aim is to show that $p^k$ divides $\tilde{\chi}(\Delta(\overline{\mathcal{R}(G_p)}))$.

  For a simplicial complex $\Delta$ we denote by $\#\Delta^d$ the number of simplices of $\Delta$ of dimension $d$. Let $m$ be the dimension of $\Delta(\mathcal{S}(G_p))$ and $n$ be the dimension of $\Delta(\mathcal{P}(G_p))$. Consider the product
  \begin{align*}
    (-1)\tilde{\chi}(\Delta(\mathcal{S}(G_p)))\cdot \tilde{\chi}(\Delta(\mathcal{P}(G_p))) & = (-1) \sum\limits_{i=-1}^m(-1)^i\#\Delta(\mathcal{S}(G_p))^i\sum\limits_{j=-1}^n(-1)^j\#\Delta(\mathcal{P}(G_p))^j \\
    & = \sum\limits_{d=-2}^{m+n}(-1)^{d+1}\sum\limits_{i+j=d}\#\Delta(\mathcal{S}(G_p))^i\#\Delta(\mathcal{P}(G_p))^j
  \end{align*}
  Observe that $$ \sum\limits_{i+j=d}\#\Delta(\mathcal{S}(G_p))^i\#\Delta(\mathcal{P}(G_p))^j = \#\Delta(\mathcal{S}(G_p)\oplus \mathcal{P}(G_p))^{d+1}. $$
  Since $\tilde{\chi}(\Delta(\mathcal{S}(G_p)))$ is either $+1$ or $-1$, by Proposition~\ref{prop:ord} it is enough to show that $p^k$ divides $\tilde{\chi}(\Delta(\mathcal{P}(G_p)))$.

  Let $P$ be a Sylow $p$-subgroup of $G$. We shall show that the $P$-singular subcomplex $\Delta(\mathcal{P}(G_p))^{P-\mathrm{sing}}$ of the complex of parabolic subracks is contractible. Let $J$ be a nontrivial subgroup of $P$ and $f\colon \mathcal{P}(G_p)^J\to \mathcal{P}(G_p)^J$ be the map taking $S\in \mathcal{P}(G_p)^J$ to the meet of all coatoms of $\mathcal{R}(G_p)$ containing $J\vee S$ in $\mathcal{R}(G_p)$. Notice that the normalizer $N_G(S)$ of $S$ contains both $J$ and $S$ and its intersection with $G_p$ must be properly contained in $G_p$ as $S$ is a parabolic subrack. This shows that the map $f$ is well-defined and, further, it is a closure operator on $\mathcal{P}(G_p)^J$. Obviously $f(\mathcal{P}(G_p)^J)$ is a cone; hence, $\Delta(\mathcal{P}(G_p)^J)$ is contractible. Then, by Lemma~\ref{lem:fix}, $\Delta(\mathcal{P}(G_p))^{P-\mathrm{sing}}$ is contractible. In particular $\tilde{\chi}(\Delta(\mathcal{P}(G_p))^{P-\mathrm{sing}}) = 0$.

  Take a parabolic subrack $T$ which is not a vertex in $\Delta(\mathcal{P}(G_p))^{P-\mathrm{sing}}$. Since the stabilizer of $T$ under the action of $P$ is trivial, $P$ acts on the set of simplices containing $T$ freely. Therefore, we have the congruence $$ \#(\Delta(\mathcal{P}(G_p))^{P-\mathrm{sing}})^i\equiv \#\Delta(\mathcal{P}(G_p))^i \mod{p^k} $$
which implies the desired result.  
\end{proof}

\begin{rem}
  Unfortunately, we couldn't prove Theorem~\ref{thm:euler} in full generality. Such a general Theorem would imply that \emph{the complex $\Delta(\overline{\mathcal{R}(G_p)})$ is homotopy equivalent to a sphere if and only if $O_p(G) = G_p$}. This last statement was pointed out by Volkmar Welker in a private communication and it can be considered as the rack analogue of a conjecture by Quillen in \cite{Qui78} stating $\Delta(\mathcal{S}_p(G))$ is contractible if and only if $O_p(G)\neq 1$.
\end{rem}


\begin{thebibliography}{30}
\bibitem{Baer39}
R.~Baer, \emph{The {S}ignificance of the {S}ystem of {S}ubgroups for the {S}tructure of the {G}roup}, Amer. J. Math. \textbf{61} (1939), no.~1, 1--44.

\bibitem{Sch94}
R.~Schmidt, \emph{Subgroup lattices of groups}, de Gruyter Expositions in Mathematics, vol.~14, Walter de Gruyter \& Co., Berlin, 1994.

\bibitem{Kay18a}
S.~Kayacan, \emph{Connectivity of intersection graphs of finite groups}, Comm. Algebra, \textbf{46} (2018), no.~4, 1492--1505.

\bibitem{KY15}
S.~Kayacan and E.~Yaraneri, \emph{Abelian groups with isomorphic intersection graphs}, Acta Math. Hungar. \textbf{146} (2015), no.~1, 107--127.

\bibitem{Br75}
K.~S. Brown, \emph{Euler characteristics of groups: the {$p$}-fractional part},
Invent. Math. \textbf{29} (1975), no.~1, 1--5.

\bibitem{Qui78}
D.~Quillen, \emph{Homotopy properties of the poset of nontrivial {$p$}-subgroups of a group}, Adv. Math. \textbf{28} (1978), no.~2, 101--128.

\bibitem{HI89}
T.~Hawkes, I.~M. Isaacs, and M.~{\"O}zaydin, \emph{On the {M}\"obius function of a finite group}, Rocky Mountain J. Math. \textbf{19} (1989), no.~4,
1003--1034.

\bibitem{Smi11}
S.~D. Smith, \emph{Subgroup complexes}, Mathematical Surveys and Monographs,
vol. 179, American Mathematical Society, Providence, RI, 2011.

\bibitem{SW12}
J.~Shareshian and R.~Woodroofe, \emph{A new subgroup lattice characterization of finite solvable groups}, J. Algebra \textbf{351} (2012), no.~1, 448 -- 458.

\bibitem{SW16}
J.~Shareshian and R.~Woodroofe, \emph{Order complexes of coset posets of finite groups are not contractible}, Adv. Math. \textbf{291} (2016), 758 -- 773.

\bibitem{Fum09}
F.~Fumagalli, \emph{A characterization of solvability for finite groups in terms of their frame}, J. Algebra \textbf{322} (2009), no.~6, 1919 -- 1945.

\bibitem{HSW19}
I.~{Heckenberger}, J.~{Shareshian}, and V.~{Welker}, \emph{{On the lattice of subracks of the rack of a finite group}}, Trans. Amer. Math. Soc. \textbf{372} (2019) 1407--1427.

\bibitem{Kay18b}
S.~Kayacan, \emph{Dominating sets in intersection graphs of finite groups},  Rocky Mountain J. Math. \textbf{48} (2018), no.~7, 2311--2335.

\bibitem{Joy82}
D.~Joyce, \emph{A classifying invariant of knots, the knot quandle}, J. Pure Appl. Algebra \textbf{23} (1982), no.~1, 37--65.
  
\bibitem{Mat82} 
S.~V.~Matveev, \emph{Distributive groupoids in knot theory}, Mat. Sb. (N.S.), \textbf{119(161)}:1(9) (1982), 78--88; Sb. Math., \textbf{47}:1 (1984), 73--83.

\bibitem{FR92}
R.~Fenn and C.~Rourke, \emph{Racks and links in codimension two}, J. Knot Theory Ramifications \textbf{1} (1992), no.~4, 343--406.

\bibitem{KS19}
D.~Kiani, A.~Saki, \emph{The lattice of subracks is atomic}, J. Combin. Theory Ser. A \textbf{162} (2019), 55--64.

\bibitem{KS21}
D.~Kiani, A.~Saki,  \emph{Complemented lattices of subracks}, J. Algebraic Combin. \textbf{53} (2021), no. 2, 455--468.

\bibitem{Kay21}
S.~Kayacan, \emph{Recovering information about a finite group from its subrack lattice}, J. Algebra \textbf{582} (2021), 26--38.

\bibitem{AG03}
N.~Andruskiewitsch and M.~Gra{\~n}a, \emph{From racks to pointed {H}opf algebras}, Adv. Math. \textbf{178} (2003), no.~2, 177--243.

\bibitem{Kay22}
S.~Kayacan, \emph{Some remarks on the subrack lattice of finite racks}, Accepted to the publication in Journal of Algebra and its Applications.

\bibitem{Kay20}
S.~Kayacan, \emph{On a conjecture about profiles of finite connected racks}, ArXiv e-prints (2020).

\bibitem{Bjo95} 
A.~Bj\"orner, \emph{Topological methods}, Handbook of combinatorics, vol. 1, 2, 1819--1872, Elsevier, Amsterdam, 1995.

\bibitem{BWW05}
A.~Björner, M.~L.~Wachs and V.~Welker, \emph{Poset fiber theorems}, Trans. Amer. Math. Soc. \textbf{357} (2005), no.~5, 1877--1899.
  
\bibitem{Rot64}
G.~C. Rota, \emph{On the foundations of combinatorial theory. I. Theory of M\"obius functions}, Z. Wahrscheinlichkeitstheorie und Verw. Gebiete  \textbf{2} (1964), 340--368.

\bibitem{BW96}
A.~Bj\"{o}rner and M.~L.~Wachs, \emph{Shellable nonpure complexes and posets. {I}}, Trans. Amer. Math. Soc. \textbf{348} (1996), no.~4, 1299--1327.

\bibitem{BW97}
A.~Bj\"{o}rner and M.~L.~Wachs, \emph{Shellable nonpure complexes and posets. {II}}, Trans. Amer. Math. Soc. \textbf{349} (1997), no.~10, 3945--3975.

\bibitem{Wac07}
M.~L.~Wachs, Poset topology: tools and applications. \emph{Geometric combinatorics}, 497--615. IAS/Park City Math. Ser., 13, Amer. Math. Soc., Providence, RI, 2007.

\bibitem{Rot95}
J.~J. Rotman, \emph{An introduction to the theory of groups}, fourth ed.,
Graduate Texts in Mathematics, vol. 148, Springer-Verlag, New York, 1995.

\bibitem{DM96}
J.~D. Dixon and B.~Mortimer, \emph{Permutation groups}, Graduate Texts in
Mathematics, vol. 163, Springer-Verlag, New York, 1996.

\bibitem{rig}
L.~Vendramin. Rig, a GAP package for racks, quandles and Nichols algebras. Available at \href{http://github.com/vendramin/rig/}{http://github.com/vendramin/rig/}

\bibitem{Sta12}
R.~P. Stanley, \emph{Enumerative combinatorics. {Vol}.~1.}, Second Edition, Cambridge Studies in Advanced Mathematics 49, Cambridge, 2012. 

\bibitem{BW83}
A.~Björner and J.~W.~Walker, \emph{A homotopy complementation formula for partially ordered sets}, Eur. J. Comb. \textbf{4} (1983), 11--19.













\end{thebibliography}

\end{document}